%% file: main.tex
\documentclass[10pt,reqno]{amsart} 

\usepackage{float} 

\usepackage{soul}

\usepackage{hhline}

\usepackage{epsfig,amssymb,amsmath,version}
\usepackage{amssymb,version,graphicx,fancybox,mathrsfs}

\usepackage{enumitem}

\usepackage{url,hyperref}
\usepackage{subfigure}
\usepackage{color}
\usepackage{stmaryrd}
\usepackage{multirow}
\usepackage{booktabs,siunitx}
\usepackage{booktabs}
\usepackage{multicol,epstopdf}
\usepackage{xypic}
\usepackage[]{graphicx}
\usepackage[all]{xy}
\usepackage{tikz,ifthen}
\usetikzlibrary{positioning, math, decorations.markings, arrows.meta}
\usetikzlibrary{calc,shapes,cd}
\usetikzlibrary{knots} 
\usepgfmodule{decorations}
\allowdisplaybreaks
\usetikzlibrary{arrows.meta, decorations.markings, positioning}

\tikzset{knotarrow/.pic={ \draw[edge, <-] (0,0) -- +(-.001,0);}}

\tikzset{edge/.style={line width=0.8}}
\tikzset{wall/.style={very thick}}

\tikzset{->-/.style n args={2}{decoration={markings, mark=at position #1 with {\arrow{#2}}}, postaction={decorate}}} 

\ExplSyntaxOn
\cs_new_eq:NN \ifstreqF \str_if_eq:nnF
\cs_new_eq:NN \ifstreqTF \str_if_eq:nnTF
\ExplSyntaxOff

\tikzset{-o-/.code 2 args={\ifstreqF{#2}{} 
{\ifstreqTF{#2}{>}
   {\pgfkeysalso{decoration={markings,mark=at position #1 with {\arrow[scale=0.8]{#2}}}
                    ,postaction={decorate}}
    }
   {\ifstreqTF{#2}{<}
       {\pgfkeysalso{decoration={markings,mark=at position #1 with {\arrow[scale=0.8]{#2}}}
                    ,postaction={decorate}}
        }
       {\pgfkeysalso{decoration={markings,
                    mark=at position #1 with
                    {\draw[black, fill={#2}] circle[radius=2pt];}}
                    ,postaction={decorate}}
        }
     }
  }}}

\allowdisplaybreaks[4]

\newtheorem{theorem}{Theorem}[section]
\newtheorem{lemma}[theorem]{Lemma}
\newtheorem{definition}[theorem]{Definition}
\newtheorem{corollary}[theorem]{Corollary}
\newtheorem{proposition}[theorem]{Proposition}
\newtheorem{remark}[theorem]{Remark}

\newcommand{\bp}{\begin{proposition}}
\newcommand{\ep}{\end{proposition}}
\newcommand{\bpr}{\begin{proof}}
\newcommand{\epr}{\end{proof}}

\newcommand{\bt}{\begin{theorem}}
\newcommand{\et}{\end{theorem}}

\newcommand{\bl}{\begin{lemma}}
\newcommand{\el}{\end{lemma}}

\newcommand{\bcr}{\begin{corollary}}
\newcommand{\ecr}{\end{corollary}}

\newcommand{\be}{\begin{equation}}
\newcommand{\ee}{\end{equation}}

\newcommand{\bes}{\begin{equation*}}
\newcommand{\ees}{\end{equation*}}

\newcommand{\ba}{\begin{align}}
\newcommand{\ea}{\end{align}}

\newcommand{\bas}{\begin{align*}}
\newcommand{\eas}{\end{align*}}

\DeclareMathOperator{\im}{\mathrm{Im}}

\DeclareMathOperator{\Int}{\mathrm{Int}}

\graphicspath{{./Figures/} }

\begin{document}
\bibliographystyle{alpha}

\title{Coordinates for ${\rm SL}_3$-web basis elements in closed surfaces}

\author[Zhe Sun]{Zhe Sun}
\address{Zhe Sun, School of Mathematical Sciences, University of Science and Technology of China, 96 Jinzhai Road, 230026 Hefei, China}
\email{sunz@ustc.edu.cn}

\author[Zhihao Wang]{Zhihao Wang}
\address{Zhihao Wang, School of Mathematics, Korea Institute for Advanced Study (KIAS), 85 Hoegi-ro, Dongdaemun-gu, Seoul 02455, Republic of Korea}
\email{zhihaowang@kias.re.kr}

\keywords{${\rm SL}_3$-skein algebras, closed surfaces, bases coordinates}

 \maketitle

\begin{abstract}
The ${\rm SL}_3$-skein algebra of a closed surface $\Sigma_g$ is a quantization of the
${\rm SL}_3$ character variety of $\Sigma_g$, where $g$ denotes the genus of the surface.
This algebra admits a basis consisting of non-elliptic web diagrams in $\Sigma_g$.
In this paper, we introduce explicit coordinates for non-elliptic web diagrams on
$\Sigma_g$, yielding a parametrization by a submonoid of $\mathbb Z^{d}$.
Here $d = 16g - 16$ for $g \ge 2$ and $d = 4$ in the torus case $g = 1$, coinciding with
the dimension of the corresponding character variety.

\end{abstract}

\tableofcontents

\newcommand{\ca}{{\cev{a}  }}

\def\BZ{\mathbb Z}
\def\Id{\mathrm{Id}}
\def\Mat{\mathrm{Mat}}
\def\BN{\mathbb N}

\def \cb {\color{blue}}
\def \cred {\color{red}}
\def \cbf {\color{blue}\bf}
\def \credf {\color{red}\bf}
\definecolor{ligreen}{rgb}{0.0, 0.3, 0.0}
\def \cg {\color{ligreen}}
\def \cgf {\color{ligreen}\bf}
\definecolor{darkblue}{rgb}{0.0, 0.0, 0.55}
\def \dbf {\color{darkblue}\bf}
\definecolor{anti-flashwhite}{rgb}{0.55, 0.57, 0.68}
\def \afw {\color{anti-flashwhite}}
\def\cF{\mathcal F}
\def\cP{\mathcal P}
\def\embed{\hookrightarrow}
\def\pr{\mathrm{pr}}
\def\cV{\mathcal V}
\def\ot{\otimes}
\def\buu{{\mathbf u}}


\def \ri {{\rm i}}
\newcommand{\bs}[1]{\boldsymbol{#1}}
\newcommand{\cev}[1]{\reflectbox{\ensuremath{\vec{\reflectbox{\ensuremath{#1}}}}}}
\def\cE{\mathcal E}
\def\fB{\mathfrak B}
\def\cR{\mathcal R}
\def\cY{\mathcal Y}
\def\cS{\mathscr S}
\def\cC{\mathcal{C}}

\def\fS{\Sigma}

\def\MN {(M)}
\def\cN {\mathcal{N}}

\def\cP{\mathcal{P}}

\def\bA{\mathbb{A}}

\newcommand{\beq}{\begin{equation}}
	\newcommand{\eeq}{\end{equation}}

\def\SL{{\rm SL}_3}

\section{Introduction}
We will use $\mathbb Z$ and $\mathbb N$ to denote the set of integers and the set of non-negative integers respectively.
Let $R$ be a commutative domain with an invertible element 
$q^{\frac{1}{3}}$.

\subsection{Background}
The ($\SL$) webs were introduced by Kuperberg to study the representation theory of $\mathfrak{sl}_3$ \cite{kuperberg1996spiders}.
An $\SL$ web is a finite oriented trivalent graph whose each vertex is either a sink or a source (see Definition~\ref{def.SL3-tangle}).

Let $\fS$ be a punctured surface, that is, a surface obtained from a closed surface by removing finitely many points.
The $\SL$-skein algebra $\cS(\fS)$ \cite{1998Homfly, sikora2001SLn, SW} is defined as the quotient $R$-module of the free $R$-module generated by isotopy classes of $\SL$ webs embedded in $\fS \times (-1,1)$, modulo the local relations \eqref{positive-cross}-\eqref{trivial-knot}.
The algebra structure on $\cS(\fS)$ is given by stacking: for two $\SL$ webs $W_1$ and $W_2$ in $\fS \times (-1,1)$, their product $W_1W_2$ is defined by placing $W_1$ above $W_2$.

It was shown in \cite{SW} that the $\SL$-skein algebra $\cS(\fS)$ admits a basis $B_\fS$ consisting of all non-elliptic web diagrams in $\fS$, where two web diagrams are identified if they represent isotopic $\SL$ webs in $\fS \times (-1,1)$.
A non-elliptic web diagram is a crossingless web diagram that contains none of the following faces:
\begin{align*}
    \input{2-gon-un},\;\input{4-gon-un},\;
    \input{un-trivial-knot}.
\end{align*}

When each connected component of $\fS$ contains at least one puncture and $\fS$ admits a triangulation, the basis elements in $B_\fS$ are parametrized by a submonoid of $\BZ^d$ \cite{frohman20223,douglas2024tropical}, where $d$ is the dimension of the corresponding character variety.
Moreover, the coordinates defined in \cite{douglas2024tropical} with respect to different triangulations are shown in \cite{ALCO_2025__8_1_101_0} that they are related by a sequence of tropical cluster transformations \cite{fockgoncharov06}.

The parametrization of basis elements in $B_\fS$ plays a central role in the study of the $\SL$-skein algebra.
Such parametrizations have important applications, including the study of the center and representation theory of $\SL$-skein algebras \cite{kim2024unicity}, the construction of quantum trace maps and identification of the highest term exponents of the quantum trace with the coordinates \cite{kim2011sl3}, among others.

The $\SL$-skein algebra is a special case of the ${\rm SL}_n$-skein algebra introduced in \cite{1998Homfly, sikora2001SLn}.
When $n=2$, this construction recovers the well-known Kauffman bracket skein algebra \cite{turaev1988conway,turaev}.
In this case, the set of crossingless multi-curves on a closed surface forms a basis of the ${\rm SL}_2$-skein algebra \cite{przytycki2000skein}, and these basis elements are parametrized by the classical Dehn--Thurston coordinates (see e.g. \cite{luo2004dehn}).
The motivation of this paper is to extend the theory of Dehn--Thurston coordinates from the ${\rm SL}_2$ setting to the $\SL$-skein algebra. Since the Dehn--Thurston coordinates could be considered as the tropicalization of the Fenchel--Nielsen coordinates of the Teichm\"uller space, our new coordinates should be considered as the tropicalization of the generalized Fenchel--Nielsen coordinates \cite{choi2021symplectic, Kim1999, sun2020flows} for the moduli space of convex $\mathbb{RP}^2$ structures \cite{fock2007moduli, goldman1990convex}.

Let us now describe our new coordinates in more details.
For each positive integer $g$, let $\fS_g$ denote the closed surface of genus $g$.
To construct coordinates for non-elliptic web diagrams in $\fS_g$, we distinguish two cases: $g \ge 2$ and $g = 1$.

\subsection{Main results}
When $g \ge 2$, as in the ${\rm SL}_2$ setting, the construction of coordinates for non-elliptic web diagrams in $\fS_g$ requires a pants decomposition of $\fS_g$.
A \emph{pants decomposition} of $\fS_g$ is a collection of nontrivial simple closed curves $\{C_j\}_{j=1}^{r}$ on $\Sigma_g$, where $r=3g-3$.
We call the decomposition \emph{oriented} if each curve $C_j$ is endowed with an orientation.

For each $j=1,\dots,r$, let $N(C_j)$ be a small closed annular neighborhood of $C_j$ in $\Sigma_g$.
Removing $\bigsqcup_{j=1}^{r} \Int N(C_j)$ from $\Sigma_g$ yields a surface that is a disjoint union of pairs of pants.
We use $\mathbb P$ to denote the set of these pairs of pants.

A \emph{dual graph} is a trivalent graph $\Gamma$ embedded in $\Sigma_g$ such that each curve $C_j$ ($j=1,\dots,r$) intersects $\Gamma$ in exactly one point, and the intersection of $\Gamma$ with each pair of pants consists of a single trivalent vertex.

For each $1 \le j \le r$, let $\mathcal N_j$ denote the surface
\[
\mathcal N_j = N(C_j) \setminus (\partial N(C_j) \cap \Gamma).
\]
To define global coordinates for a non-elliptic web diagram $W$ in $\fS_g$, we first place $W$ in general position with respect to the oriented pants decomposition $\{C_j\}_{j=1}^{r}$ (Definition~\ref{def-general-position}).
We then define local coordinates for each intersection $W \cap \mathcal N_j$, $1 \le j \le r$, and for each intersection $W \cap P$, where $P \in \mathbb P$.

We depict $\mathcal N_j$ as in Figure~\ref{fig:curve-C}, where the red oriented curve represents $C_j$.
An \emph{increasing} (resp.\ \emph{decreasing}) curve is an oriented curve in $\mathcal N_j$ connecting the two boundary components of $\mathcal N_j$ whose orientation is always from left to right (resp.\ from right to left).

\vspace{0.2cm}

To define the local coordinates for $W \cap \mathcal N_j$, we classify the web diagrams $W \cap \mathcal N_j$ into two types (Lemma~\ref{lem-A} and Proposition~\ref{prop-bigon}): strict minimal braided webs (Definition~\ref{def-braid}) and line-circle webs (Figure~\ref{fig:line-circle-web}).
As noted in Definition~\ref{def-braid}, a minimal braided web in $\mathcal N_j$ is obtained from a unique minimal strict braid in $\mathcal N_j$, which consists of a collection of increasing and decreasing curves in $\mathcal N_j$.

When $W \cap \mathcal N_j$ is a strict minimal braided web obtained from a minimal strict braid in $\mathcal N_j$, suppose this minimal strict braid consists of increasing curves
$\{c_1,\dots,c_m\}$ and decreasing curves $\{d_1,\dots,d_l\}$.
We define
\begin{equation}\label{def-intro-twist1}
    \begin{split}
         t_{j1}(W) &:= \text{the sum of the twist numbers of the curves $\{c_1,\dots,c_m\}$},\\
    t_{j2}(W) &:= \text{the sum of the twist numbers of the curves $\{d_1,\dots,d_l\}$},
    \end{split}
\end{equation}
where the twist number of a curve is defined as in Figure~\ref{fig:twist-number}.

When $W \cap \mathcal N_j$ is a line-circle web, we define
\begin{align}\label{def-intro-twist2}
    (t_{j1}(W),\, t_{j2}(W))=
    \begin{cases}
        (t,m), & \text{if $W \cap \mathcal N_j$ is the left configuration in Figure~\ref{fig:line-circle-web}},\\
        (m,t), & \text{if $W \cap \mathcal N_j$ is the right configuration in Figure~\ref{fig:line-circle-web}}.
    \end{cases}
\end{align}

Theorem~\ref{prop-twist} shows that the pair $(t_{j1}(W), t_{j2}(W))$, together with the two intersection numbers (Definition~\ref{def-twist-number}) and the signature of $W \cap \mathcal N_j$ (see \eqref{def-signature}), uniquely determines $W \cap \mathcal N_j$.

\vspace{0.2cm}

We may regard each pair of pants $P \in \mathbb P$ as a third-punctured sphere $\fS_{0,3}$, as illustrated in Figure~\ref{fig:sphere}.
Then $W \cap P$ can be viewed as an unbounded lamination in $\fS_{0,3}$ (see \S\ref{sub-unbounded}).
Let $I(\fS_{0,3})$ denote the set of eight vertices in $\fS_{0,3}$ labeled by $11,12,21,22,31,32,v,v'$ as in Figure~\ref{fig:sphere}.

In \cite{ishibashi2025unbounded}, the authors defined the shear coordinates of $W \cap P$ (Lemma~\ref{lem-domain}) 
\[
{\bf x}(W \cap P) = (x_{11}, x_{12}, x_{21}, x_{22}, x_{31}, x_{32}, x_v, x_{v'})
    \in \BZ^{I(\fS_{0,3})}.
\]
We then define
\begin{align*}
    t_P(W) &= x_v - x_{v'},\\
    h_P(W) &= x_{11} - x_{12} + x_{21} - x_{22} + x_{31} - x_{32}.
\end{align*}

Proposition~\ref{prop:intersection-coordinates} and Theorem~\ref{lem:image-P} together imply that the pair $(t_P(W), h_P(W))$, together with the six intersection numbers (Definition~\ref{def:coordinates-pants}), uniquely determines $W \cap P$ up to the moves shown in Figure~\ref{fig:flip-pants}.

\vspace{0.2cm}

Let $C$ be an oriented closed curve in $\fS$.
We say that a web diagram $W$ is in \emph{minimal intersection position} with respect to $C$ if the intersection number between $W$ and $C$ is minimal.

Assume that $W$ is in minimal intersection position with respect to $C$.
By placing $C$ above $W$, we define
\begin{equation*}
\begin{split}
i_1(C,W) &:= \text{the number of positive crossings between $C$ and $W$},\\
i_2(C,W) &:= \text{the number of negative crossings between $C$ and $W$}.
\end{split}
\end{equation*}
Proposition~\ref{prop-bigon} shows that $i_1(C,W)$ and $i_2(C,W)$ are well defined.

For each $1 \le j \le r$, we then define
\begin{align*}
    n_{j1}(W):= i_1(C_j, W), \qquad
    n_{j2}(W):= i_2(C_j, W).
\end{align*}

We now state our first main theorem, which parametrizes the non-elliptic web diagrams in $B_{\fS_g}$ for $g \ge 2$ by a submonoid of $\BZ^{16g-16}$,  where $16g-16$ is the dimension of the corresponding character variety.

\begin{theorem}\label{intro-thm1}
Let $\{C_j\}_{1 \le j \le r}$ be an oriented pants decomposition of $\Sigma_g$ with $g \ge 2$, together with a dual graph $\Gamma$, where $r = 3g-3$.
The coordinate map
\begin{align*}
    \kappa &\colon B_{\fS_g} \longrightarrow
    \BN^r \times \BN^{r} \times \BZ^r \times \BZ^r \times \BZ^{\mathbb P} \times \BZ^{\mathbb P},\\
    W &\mapsto \Bigl(
        (n_{j1}(W))_{1 \le j \le r},\,
        (n_{j2}(W))_{1 \le j \le r},\,
        (t_{j1}(W))_{1 \le j \le r},\,
        (t_{j2}(W))_{1 \le j \le r},\,
        (t_P(W))_{P \in \mathbb P},\,
        (h_P(W))_{P \in \mathbb P}
    \Bigr)
\end{align*}
is injective.
Moreover,
\[
\im \kappa = \Theta,
\]
where $\Theta$ is the submonoid of
$\BN^r \times \BN^{r} \times \BZ^r \times \BZ^r \times \BZ^{\mathbb P} \times \BZ^{\mathbb P}$
defined in Definition~\ref{def-image-theta}.
\end{theorem}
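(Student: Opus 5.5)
The plan is a local-to-global gluing argument, modelled on the classical proof for Dehn--Thurston coordinates.

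\textbf{Well-definedness.} First I would check that $\kappa$ is well defined on $B_{\fS_g}$. Every non-elliptic web diagram $W$ can be isotoped into general position with respect to $\{C_j\}$ (Definition~\ref{def-general-position}). In that position $W$ is in minimal intersection position with each $C_j$. By Proposition~\ref{prop-bigon}, $n_{j1}(W)$ and $n_{j2}(W)$ are isotopy invariants. Lemma~\ref{lem-A} and Proposition~\ref{prop-bigon} say that each $W\cap\mathcal N_j$ is either a strict minimal braided web or a line-circle web. The main thing to check is that $t_{j1},t_{j2},t_P,h_P$ do not depend on the chosen general-position representative. Two such representatives should differ by a finite list of moves:
\begin{itemize}
\item isotopies supported in some $\mathcal N_j$ or some $P$;
\item the flip moves of Figure~\ref{fig:flip-pants};
\item sliding a piece of $W$ across $\partial N(C_j)$ away from $\Gamma$.
\end{itemize}
I would verify invariance under each move. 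For slides, the point is that sliding transfers twisting between the annulus and the adjacent pants, and the normalization by the dual graph $\Gamma$ (the punctures in $\mathcal N_j$) should prevent this. If this fails, I would tighten the notion of general position until the representative is canonical.

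\textbf{Injectivity.} Fix $W,W'$ with $\kappa(W)=\kappa(W')$. The intersection numbers $n_{j1},n_{j2}$ fix how many strands, and with which orientations, cross each $\partial N(C_j)$. These boundary data also give the six intersection numbers of each pants $P$.
\begin{itemize}
\item By Proposition~\ref{prop:intersection-coordinates} and Theorem~\ref{lem:image-P}, the six intersection numbers together with $(t_P,h_P)$ determine $W\cap P$ up to the moves of Figure~\ref{fig:flip-pants}.
\item By Theorem~\ref{prop-twist}, the two intersection numbers, the pair $(t_{j1},t_{j2})$ and the signature determine $W\cap\mathcal N_j$.
\end{itemize}
The signature is not listed as a coordinate, so I would need to recover it from the pants pieces adjacent to $C_j$, or from the sign pattern of $(t_{j1},t_{j2})$ in the line-circle case. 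Once every local piece is determined, the pieces glue uniquely along the boundary arcs, since endpoints are ordered relative to the marked points $\partial N(C_j)\cap\Gamma$. Any residual ambiguity from the flip moves in the pants is absorbed by an isotopy in $\fS_g$. Hence $W=W'$ in $B_{\fS_g}$.

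\textbf{Image.} Here I would unwind Definition~\ref{def-image-theta}. I expect $\Theta$ to consist of the tuples such that:
\begin{itemize}
\item for each $P$, the induced boundary intersection numbers together with $(t_P,h_P)$ lie in the image described by Theorem~\ref{lem:image-P};
\item for each $j$, $(t_{j1},t_{j2})$ is compatible with $(n_{j1},n_{j2})$ as in Theorem~\ref{prop-twist}, for example the line-circle case is forced when some $n_{jk}=0$.
\end{itemize}
For $\im\kappa\subseteq\Theta$, the local results give these conditions directly. For $\Theta\subseteq\im\kappa$, I would realize each local datum by Theorem~\ref{lem:image-P} and Theorem~\ref{prop-twist}, and glue the pieces along the boundaries. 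Matching numbers of in- and out-strands makes the gluing possible.

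\textbf{Main obstacle.} The hardest step is showing that the glued web is non-elliptic and already in minimal position. A new 2-gon, 4-gon or trivial circle could appear across a gluing circle, or a bigon with $C_j$ could appear, and then the coordinates of the reduced web would differ from the prescribed ones. I would attack this with an Euler-characteristic or face-counting argument. Any elliptic face crossing $\partial N(C_j)$ would meet $\mathcal N_j$ in a configuration that is excluded for strict minimal braided or line-circle webs. Any bigon with $C_j$ would contradict the minimality built into the braid normal form. Finally I would check that $\Theta$ is closed under addition, via the union of disjoint local pieces, so that it is a submonoid.
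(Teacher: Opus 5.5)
\textbf{Gap in injectivity.} The argument breaks at the sentence ``any residual ambiguity from the flip moves in the pants is absorbed by an isotopy in $\Sigma_g$.'' The moves of Figure~\ref{fig:flip-pants} are not isotopies of the glued diagram. They reflect equivalences of unbounded laminations such as \eqref{eq-flip-lines}, which forget the order of incoming and outgoing endpoints at a puncture, so they change the signature that $W\cap P$ induces on $\partial P$. Theorem~\ref{lem:image-P}(a) determines $W\cap P$ only up to these moves, so the pants pieces of $W$ and $W'$ can induce different signatures on $\partial N(C_j)$. You propose to read off the signature of $W\cap\mathcal N_j$ from the adjacent pants. Via Theorem~\ref{prop-twist} this can then produce different annulus pieces for $W$ and $W'$, and nothing in your proposal shows the two glued diagrams are isotopic. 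The line-circle remark does not help either: a signature is a $\pm$-word of length $2n$ and is not encoded in $(t_{j1},t_{j2})$.

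\textbf{What the paper does instead.} The missing idea is skein-theoretic. The paper works in the graded algebra $\mathscr S_{\mathcal P}(\Sigma_g)$, filtered by total intersection number with the pants curves. It alters $W_1$ near the circles $\partial N(C_j)$ (Figure~\ref{fig:moves-pants}) so that its pants pieces become exactly those of $W_0$. The result $W_2$ is only almost non-elliptic and is not isotopic to $W_1$. By \eqref{4-gon} and \eqref{4-gon-re}, however, the two differ by turn-back terms of strictly smaller intersection number, so $[W_1]=[W_2]$. Removing the 4-gons inside the annuli (Figure~\ref{fig:H-annulus}) again preserves the class and the vectors $\mathbf t$. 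The annulus pieces now have the same signatures as those of $W_0$, so Theorem~\ref{prop-twist} gives $W_3=W_0$. Only then does Lemma~\ref{lem-graded} (non-elliptic diagrams form a basis of the graded algebra) turn $[W_1]=[W_0]$ into $W_1=W_0$.

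\textbf{Gap in surjectivity.} Your treatment of the ``main obstacle'' rests on a false claim: elliptic faces across a gluing circle do occur. Take a crossbar of a strict minimal braided web adjacent to $\partial\mathcal N_j$ and a crossbar of $W_P$ adjacent to the same circle. Together they form a 4-gon in the glued diagram, although both local pieces are admissible, so no Euler-characteristic or face-counting argument can rule this out. The paper accepts that the glued diagram is only almost non-elliptic. It removes such 4-gons by the parallel resolution (Figure~\ref{fig:remove-4-gon}), which deletes one boundary-adjacent crossbar on each side. This leaves the pants coordinates and boundary counts unchanged and alters only the signature, not $\mathbf t$, of the annulus piece, so the prescribed coordinates survive.

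\textbf{Well-definedness.} This need not be re-derived by tightening the notion of general position; it follows from Corollary~\ref{cor-position-unique}.
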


\vspace{0.2cm}

When $g=1$, that is, when the closed surface $\fS_g$ is the torus $\fS_1$, there is no pants decomposition.
Instead, we define coordinates for non-elliptic web diagrams in $\fS_1$ using an oriented closed curve
$\gamma \subset \fS_1$ together with a fixed point $p \in \gamma$.

A non-elliptic web diagram $W$ in $\fS_1$ is said to be in \emph{general position} with respect to $\gamma$
if $W$ is in minimal intersection position with $\gamma$ and satisfies $p \notin W$.

Cutting $\fS_1$ along $\gamma$ and taking the closure yields an annulus, which we denote by $\bA_\gamma$.
The point $p$ gives rise to two points $p', p'' \in \partial \bA_\gamma$, one on each boundary component.
We set
\[
\bA_\gamma' := \bA_\gamma \setminus \{p', p''\}.
\]

Let $W_\gamma$ denote the web diagram in $\bA_\gamma'$ obtained from $W$ by cutting along $\gamma$.

The orientation of $\gamma$ induces orientations on the two boundary components of $\bA_\gamma'$.
There is a unique identification of $\bA_\gamma'$ with the annulus in
Figure~\ref{fig:curve-C} such that the induced orientations
of the boundary components of $\bA_\gamma'$ agree with the orientation of the red curve  in Figure~\ref{fig:curve-C}.
Using this identification, we define the twist coordinates $(t_1(W), t_2(W))$ from $W_\gamma$
as in \eqref{def-intro-twist1} and \eqref{def-intro-twist2}.

We further define the intersection coordinates
\begin{align*}
    n_{1}(W) := i_1(\gamma, W), \qquad
    n_{2}(W) := i_2(\gamma, W).
\end{align*}

We now state our second main theorem, which parametrizes the non-elliptic web diagrams on $\fS_1$
by a submonoid of $\BZ^4$.

\begin{theorem}\label{intro-thm-torus}
Let $\gamma$ be an oriented closed curve in $\fS_1$, and fix a point $p \in \gamma$.
The coordinate map
\begin{align*}
    \kappa \colon B_{\fS_1} \longrightarrow \BN^2 \times \BZ^2, \quad
    W \longmapsto (n_1(W), n_2(W), t_1(W), t_2(W))
\end{align*}
is injective. Moreover,
\[
    \im \kappa
    =
    \Bigl\{
    (n_1,n_2,t_1,t_2) \in \BN^2 \times \BZ^2
    \;\big|\;
    t_i \ge 0 \text{ whenever } n_i = 0,\ i=1,2
    \Bigr\}.
\]
\end{theorem}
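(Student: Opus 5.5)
The plan is to reduce the torus case to the annulus classification that the paper develops for the neighborhoods $\mathcal N_j$ (Lemma~\ref{lem-A}, Proposition~\ref{prop-bigon}, Theorem~\ref{prop-twist}). Everything rests on one correspondence: non-elliptic diagrams on $\fS_1$ in general position with respect to $(\gamma,p)$, modulo isotopy, should match the diagrams $W_\gamma$ in $\bA_\gamma'$ whose two boundary patterns agree under the gluing.

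\emph{Step 1: well-definedness.} Put $W$ in general position with respect to $\gamma$. By Proposition~\ref{prop-bigon}, $n_1(W)$ and $n_2(W)$ do not depend on the choice of minimal position. Isotopies of $W$ that keep minimal intersection and avoid $p$ change $W_\gamma$ only by isotopy in $\bA_\gamma'$. Passing $W$ across $p$, or moving the crossing points around $\gamma$, should change $W_\gamma$ by a simultaneous boundary slide on both components of $\partial\bA_\gamma'$. This is the kind of move the twist numbers of Figure~\ref{fig:twist-number} are built to handle, since they are measured relative to $p'$ and $p''$, which come from the same point. So $(t_1,t_2)$ is an invariant of the isotopy class.

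\emph{Step 2: classification of $W_\gamma$.} Minimality of the intersection with $\gamma$, together with non-ellipticity of $W$, should force $W_\gamma$ to be non-elliptic and free of the bigon/half-face reductions against the boundary. By Lemma~\ref{lem-A} and Proposition~\ref{prop-bigon}, $W_\gamma$ is then either a strict minimal braided web or a line-circle web.

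\begin{itemize}
\item The boundary word on each side has $n_1$ positive and $n_2$ negative points. Because it is read from the same points of $\gamma$, it is the same word on both sides.
\item In the braided case, I expect the matching condition to force the signature in \eqref{def-signature} to be determined by $(n_1,n_2)$, with no extra datum. Theorem~\ref{prop-twist} then recovers $W_\gamma$ from $(n_1,n_2,t_1,t_2)$, and gluing recovers $W$, which gives injectivity.
\item Line-circle webs occur when $n_1=0$ or $n_2=0$. By \eqref{def-intro-twist2}, one coordinate is then the twist $t$ of the lines and the other is the number $m\ge0$ of circles parallel to $\gamma$, which gives the constraint $t_i\ge0$ when $n_i=0$.
\item When $n_1=n_2=0$, $W$ consists of closed components disjoint from $\gamma$. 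A non-elliptic web in the torus minus $\gamma$, which is an annulus, consists only of parallel copies of $\gamma$ in its two orientations, counted by $(t_1,t_2)\in\BN^2$.
\end{itemize}

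\emph{Step 3: surjectivity.} Given $(n_1,n_2,t_1,t_2)$ in the stated set, I would use the image description implicit in Theorem~\ref{prop-twist} to build a minimal braid, or a line-circle web, in $\bA_\gamma'$ with matching boundary words. I then glue it to get $W$ on $\fS_1$. For braids, one can choose $n_1$ increasing and $n_2$ decreasing curves with prescribed total twists, then resolve crossings into the minimal braided web.

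\emph{Main obstacle.} The hard part is checking that the glued diagram is non-elliptic and in minimal intersection position with $\gamma$. Gluing could create a square or bigon face that crosses $\gamma$, or allow an isotopy that lowers the intersection. I would first rule this out using strictness and minimality of the braid: a face crossing $\gamma$ bounds a disk, so it would yield a reducible configuration adjacent to the boundary in $\bA_\gamma'$, which minimality excludes. Confirming that the signature is forced by the gluing in Step 2 is a secondary check.
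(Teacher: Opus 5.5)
\textbf{There is a genuine gap in the injectivity argument.} In Step~2 you expect the gluing to determine the signature ${\bf s}(W_\gamma)$ from $(n_1,n_2)$; it does not. Gluing only forces $s_i=s_{n+i}$, i.e.\ the same word on $A_0$ and $A_1$. The word itself is not even an invariant of the basis element:
\begin{itemize}
\item moving $W$ across $p$ rotates the word cyclically;
\item pushing an H-bar across $\gamma$ (a crossbar move) interchanges two adjacent opposite signs.
\end{itemize}
So $\kappa(W)=\kappa(D)$ does not give ${\bf s}(W_\gamma)={\bf s}(D_\gamma)$, and Theorem~\ref{prop-twist} cannot be applied.

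Closing this gap is the real content of the paper's proof, and it needs an idea absent from your plan: work in the graded algebra $\mathscr{S}_C(\Sigma_1)$, filtered by intersection with $\gamma$. The argument runs as follows.
\begin{itemize}
\item Replace two adjacent, oppositely oriented strands crossing $\gamma$ by a $4$-gon straddling $\gamma$ (Figure~\ref{fig:signature-move}). This does not change the class, because by \eqref{4-gon}/\eqref{4-gon-re} the difference is the turn-back resolution, which meets $\gamma$ fewer times.
\item After cutting, this swaps $s_i,s_{i+1}$ (and $s_{n+i},s_{n+i+1}$) and leaves $\mathbf{t}$ unchanged. Repeating it, one reaches the signature of $D_\gamma$.
\item Removing the $4$-gons now lying inside the annulus (Figure~\ref{fig:H-annulus}) gives a non-elliptic braided web with the same $\mathbf{t}$ and signature as $D_\gamma$. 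By Lemma~\ref{lem-minimal-non}(a) and Theorem~\ref{prop-twist}, it equals $D_\gamma$.
\item Regluing gives $[W]=[D]$ in $\mathscr{S}_C(\Sigma_1)$, and Lemma~\ref{lem-graded} yields $W=D$.
\end{itemize}
Without this step, or some substitute showing that all admissible signatures with the same $\mathbf{t}$ give the same basis element, Step~2 does not prove injectivity.

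\textbf{Your resolution of the obstacle in Step~3 also fails.} Strictness and minimality of the braid in $\mathbb{A}'_\gamma$ do not exclude a $4$-gon across $\gamma$ after gluing. Each half of such a face is just a braid crossing next to $A_0$ (resp.\ $A_1$) between the strands in positions $i,i+1$, which is perfectly allowed in $\mathbb{A}'_\gamma$.

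Here is a concrete example. Take $n_1=n_2=2$, with twists chosen so that the glued increasing and decreasing curves are anti-parallel copies of one curve meeting $\gamma$ twice.
\begin{itemize}
\item With the word $(+,-,+,-)$, the minimal braid has no crossings.
\item With $(+,+,-,-)$ and the same $\mathbf{t}$, the fixed order of endpoints within each boundary interval of $\mathbb{A}'_\gamma$ forces two crossings. After gluing, these bound an empty bigon across $\gamma$, i.e.\ a $4$-gon of the resolved web.
\end{itemize}
The paper does not try to rule this out: it removes such $4$-gons after gluing and uses $[E'']=[E']$.

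A smaller point: Step~1 only accounts for boundary slides. Two general positions can also differ by crossbar and annulus-H moves (Corollary~\ref{Cor-minimal-unique}), which change $W_\gamma$ itself. The paper proves well-definedness separately, by comparing with a parallel curve $\beta$.
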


The coordinates developed in Theorems~\ref{intro-thm1} and \ref{intro-thm-torus}
have potential applications to the study of the domain property, the Frobenius map \cite{HLW,higgins2025miraculous},
and the center of the $\SL$-skein algebra of closed surfaces, as well as to
$\SL$-Teichm\"uller theory \cite{fockgoncharov06}. We plan to investigate these directions in future work.

{\bf Acknowledgements:}
Z. W. is supported by a KIAS Individual Grant (MG104701) at the Korea Institute for Advanced Study. Z. S. is supported by the NSFC grant 12471068. We thank Tsukasa Ishibashi and Thang T. Q. L{\^e} for helpful discussions.

\section{The $\SL$-skein algebra}\label{sec2}

In this section, we review $\SL$-skein algebras in \cite{frohman20223}. 

\subsection{$\SL$-skein algebras}\label{sub111}

\def\cq{\cS}

\def\cM{\mathcal{M}}

A \emph{uni-trivalent graph} $\alpha$ is a finite graph whose vertices all have valency either one or three.
We also allow loop components, that is, connected components without vertices.
An \emph{orientation} of a uni-trivalent graph is an assignment of an orientation to each edge and each loop such that every trivalent vertex is either a \emph{source} or a \emph{sink}.
The \emph{boundary} of $\alpha$, denoted by $\partial\alpha$, is the set of $1$-valent vertices of $\alpha$.
Each point of $\partial\alpha$ is called an \emph{endpoint} of $\alpha$.

\begin{definition}
\label{def:surface}
A {\bf marked surface} $\fS$ is a surface obtained from an oriented compact surface $\overline{\fS}$, possibly with boundary circles, by removing a finite set $\mathcal{M}$ of points.  
The elements of $\mathcal{M}$ are called {\bf marked points} of $\fS$, and those lying in the interior of $\overline{\fS}$ are called the {\bf punctures} of $\fS$, denoted by $\mathring{\mathcal{M}}$.


If $\fS$ has empty boundary, we say that $\fS$ is a {\bf punctured surface}.
A punctured surface is called a {\bf closed surface}
if it contains no punctures.
\end{definition}

\def\wfS{\widetilde{\fS}}

Let $\fS$ be a marked surface and define
\[
\widetilde{\fS} :=\fS \times (-1,1).
\]
We call $\widetilde{\fS}$ the \emph{thickening} of $\fS$, or a \emph{thickened surface}.
For a point $x \in \fS \times (-1,1)$, we refer to its second coordinate (that is, its component in $(-1,1)$) as the \emph{height} of $x$.

\begin{definition}\label{def.SL3-tangle}
    Let $\fS$ be a marked surface. 
    A {\bf web} in $\wfS$ is a properly embedded oriented  uni-trivalent graph $\alpha$.
    We require the following conditions for 
    $\alpha$:
    \begin{enumerate}[label={\rm (W\arabic*)}]\itemsep0,3em
	\item $\alpha$ is equipped with a transversal framing. 
	
	\item The set of half-edges at each $3$-valent vertex is equipped with a  cyclic order. 

    \item 
        For each connected component $b$ of $\partial \fS$, the endpoints of $\alpha$ lying over $b\times (-1,1)$, if there are any, have mutually distinct heights;
        
    \item 
        The framing of $\alpha$ at each endpoint of $\alpha$ is parallel to the $(-1,1)$ factor and points toward the positive direction of $(-1,1)$.
	\end{enumerate}
\end{definition}

We will consider webs in $\wfS$ up to isotopy.
The emptyset $\emptyset$ is also considered as a web in $\wfS$. We have the convention that $\wfS$ is isotopic only to itself. 

Any web $\alpha$ in $\widetilde{\fS}$ can be represented by a {\bf web diagram} in $\fS$. The diagram is just the projection of $\alpha$ on $\fS$. Before projecting, we isotope  $\alpha$ such that the framing is given by the positive direction of $(-1,1)$ and at each singular point there are two transversal strands with over-crossing or under-crossing information. 
At every $3$-valent vertex, the cyclic order of half-edges as the image of the web diagram is given by the positive orientation of $\Sigma$ (drawn counter-clockwise in pictures).


Our ground ring is a commutative domain $R$ with an invertible element 
$q^{\frac{1}{3}}$.

The $\SL$-skein module of $\Sigma$, denoted as $\cS(\fS)$, is
the quotient module of the $R$-module freely generated by the set 
of all isotopy classes of 
webs in $\wfS$ subject to  relations \eqref{positive-cross}-\eqref{re-boundary-Y}.

\begin{align}
\label{positive-cross}
    \input{positive-crossing}
    &= q^{-\frac{1}{3}}\,\input{H-re}
       +  q^{\frac{2}{3}}\,\input{smooth-re}, \\
       \label{negative-cross}
    \input{negative-crossing}
    &= q^{\frac{1}{3}}\,\input{H-re}
       +  q^{-\frac{2}{3}}\,\input{smooth-re}, \\
       \label{4-gon}
    \input{4-gon}
    &= \input{smooth2} + \input{smooth1}, \\
    \label{4-gon-re}
    \input{4-gon-rev}
    &= \input{smooth2-re} + \input{smooth1-re}, \\
    \input{2-gon}
    &= -\bigl(q + q^{-1}\bigr)\,\input{straight-line},\\
    \label{trivial-knot}
    \raisebox{-.15in}{\input{trivial-knot1}}&=
     \input{trivial-knot2}= (q^2+1+q^2)
     \raisebox{-.15in}{\begin{tikzpicture}
\filldraw[draw=white,fill=gray!20] (0,0) rectangle (1, 1);
\end{tikzpicture}},\\
\input{trivial-arc} &=\input{trivial-Y}= 0,\label{re-boundary-Y}
\end{align}
where each shaded rectangle in the above relations represents an embedded disk in $\Sigma$. In each relation, the web diagrams coincide outside this disk.

Then  $\cq(\fS)$ admits an algebra structure. For any two  webs $\alpha_1$ and $\alpha_2$ in   $\widetilde{\fS}$, we define $\alpha_1\alpha_2\in \cq(\fS)$ to be the result of stacking $\alpha_1$ above $\alpha_2$. We then refer to $\mathscr{S}(\fS)$ as the $\SL$-{\bf skein algebra} of the surface $\fS$.

\begin{definition}\label{def-resolution}
We call $\input{H-re}$ the \textbf{H-resolution}  of the crossings appearing in \eqref{positive-cross} and \eqref{negative-cross}.  
\end{definition}

\subsection{Bases of the $\SL$-skein algebras}

If two web diagrams in a surface $\fS$ represent two isotopic webs in the thickened surface $\widetilde{\fS}=\fS\times(-1,1)$, we will consider them as the same web diagram. We will implicitly identify a web diagram in $\fS$ with the corresponding isotopy class of webs in $\widetilde{\fS}$.

\begin{lemma}\cite[Page~6]{frohman20223}\label{lem-flip}
    Any two isotopic crossingless web diagrams differ by a planar isotopy
and flip moves (Figure~\ref{fig:flip}).
\end{lemma}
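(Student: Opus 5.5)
The plan is to prove this by reducing isotopy of webs in $\widetilde{\fS}=\fS\times(-1,1)$ to isotopy of diagrams through Reidemeister-type moves, and then to show that any such sequence between crossingless diagrams can be replaced by one that never creates crossings, apart from flips.

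The first step is the standard diagrammatic fact for framed trivalent graphs in a thickened surface. Two webs are isotopic if and only if their diagrams are related by planar isotopy together with the framed Reidemeister moves R2 and R3 and the vertex moves (sliding a strand over or under a trivalent vertex). R1 does not appear because the framing is vertical and the diagrams are blackboard framed. We also need the moves that rotate a vertex through the vertical direction. With our convention that the cyclic order at a vertex is read off from the orientation of $\fS$, such a rotation is exactly the flip move of Figure~\ref{fig:flip}: it reverses the planar cyclic order and introduces a half twist on the three incident edges.

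Given two isotopic crossingless diagrams $D$ and $D'$, the second step is to choose a generic isotopy between the corresponding webs and a sequence of moves connecting $D$ to $D'$. We then look at the underlying embedded graph. Since $D$ and $D'$ are crossingless, they are embedded graphs in $\fS$ whose vertices carry cyclic orders up to flips. The webs are isotopic, so the underlying abstract oriented graphs agree. The underlying spatial graphs are also isotopic, and the isotopy preserves the vertical framing.

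The third step compares $D$ and $D'$ as surface embeddings. An isotopy in the thickening, when projected, yields a homotopy of the graph in $\fS$. For embedded graphs in a surface (away from disk and sphere components, which we treat by hand), homotopic embeddings that are also isotopic as framed spatial graphs should be planarly isotopic once the cyclic orders at the vertices are matched. We match the cyclic orders by performing flips at the vertices where the planar cyclic order of $D$ differs from that of $D'$. We also check that the framing data agrees: a flip changes the framing twists on the incident edges in a way that is consistent with the vertical framing.

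The main obstacle is this third step: showing that homotopic embedded graphs which are isotopic in $\widetilde{\fS}$ are related by planar isotopy plus flips. This is where the surface topology enters. I would first try an innermost-disk argument on the trace of the isotopy in $\fS\times(-1,1)\times[0,1]$. Alternatively, I would use the fact that crossingless diagrams give incompressible graphs in the thickening, so any isotopy can be pushed to be level-preserving apart from local vertex rotations. Trivial components such as contractible loops would be treated separately.
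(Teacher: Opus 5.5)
\textbf{There is a genuine gap.} The paper does not prove this lemma; it only quotes \cite{frohman20223}. So your proposal has to stand on its own, and it does not. Steps~1 and~2 only restate the problem. Step~3 is the entire content of the lemma, and you assert it as something that ``should be'' true, followed by two tentative strategies.

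Worse, Step~3 is false as you formulate it. Let $g\ge 2$, let $\alpha\subset\Sigma_g$ be an essential simple closed curve, and let $c_1\simeq\alpha$, $c_2\simeq-\alpha$ be two disjoint parallel copies. Raise $c_1$ to a higher level, slide it across $c_2$, and lower it again. This is an isotopy of vertically framed webs in $\Sigma_g\times(-1,1)$, and it exchanges the order of the two curves.

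The two resulting crossingless diagrams are homotopic and have no vertices, so there are no cyclic orders to match. Yet they are not planar isotopic. An ambient isotopy of $\Sigma_g$ would have to carry $c_1$ to its new position preserving orientation, and hence preserve the side on which the annular complementary region lies. But that annulus is on the right of $c_1$ before the move and on its left afterwards. Since $g\ge 2$, the other complementary regions are not annuli, so this cannot happen.

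So the moves beyond planar isotopy that the lemma needs come from components that can be reordered by passing one over another at different heights; compare the move \eqref{eq-flip-lines} for arcs. The real work is to show that such reorderings are the only obstruction, and your argument never confronts this. The same example shows that an isotopy between crossingless diagrams cannot in general be made level-preserving. Your second suggested strategy therefore fails as stated.

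Your identification of the flip with a rotation of a vertex through the vertical is also inconsistent with the conventions of Section~\ref{sec2}. There, a web diagram must have, at every vertex, counterclockwise planar order equal to the intrinsic cyclic order. A vertex rotation reverses the projected order, and to return to a legitimate diagram two incident edges have to cross. So this move never relates two crossingless diagrams and cannot be the move in Figure~\ref{fig:flip}.

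For the same reason, the cyclic-order matching in Step~3 is vacuous. The isotopy identifies the underlying abstract webs of $D$ and $D'$ compatibly with their intrinsic cyclic orders, so their planar rotation systems already agree. A smaller point: for blackboard-framed diagrams you still need a framed version of R1 (cancelling a pair of opposite kinks), so R1 does not simply disappear.
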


\begin{figure}
    \centering
    \includegraphics[width=0.5\linewidth]{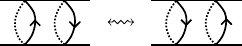}
    \caption{A flip move.}
    \label{fig:flip}
\end{figure}

\begin{definition}\label{def-basis}
    A crossingless web diagram is called {\bf non-elliptic} if 
 it has none of the following elliptic faces:
\begin{align}\label{elliptic-face}
    \input{2-gon-un},\;\input{4-gon-un},\;
    \input{un-trivial-knot},\;
    \input{trivial-arc}, \;
    \input{trivial-Y}
\end{align}
\end{definition}

 By the method of confluence, following~\cite{SW}, we obtain the following result.

\begin{theorem}[\cite{frohman20223}, Proposition~4]\label{def-B_S}
Let $\fS$ be a marked surface.
Then the set of all non-elliptic web diagrams in $\fS$, denoted by $B_{\fS}$, forms an $R$-module basis of the skein algebra $\cS(\fS)$.
\end{theorem}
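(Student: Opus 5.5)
The plan is the diamond lemma (Bergman's method of confluence), following \cite{SW}. It has two halves: spanning and linear independence.

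\textbf{Spanning.} First I would use \eqref{positive-cross} and \eqref{negative-cross} to resolve every crossing of a web diagram. This writes any web as an $R$-linear combination of crossingless diagrams.

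Next I would show that every crossingless diagram reduces to a combination of non-elliptic ones. If a crossingless diagram has an elliptic face from \eqref{elliptic-face}, I apply the matching relation:
\begin{itemize}
\item the $4$-gon relations \eqref{4-gon}, \eqref{4-gon-re};
\item the bigon relation;
\item the trivial loop relation \eqref{trivial-knot};
\item the boundary relations \eqref{re-boundary-Y}.
\end{itemize}
Each of these rewrites strictly decreases the complexity, taken to be (number of trivalent vertices, number of components) in lexicographic order. Hence the reduction terminates, and $B_\fS$ spans $\cS(\fS)$.

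\textbf{Independence.} Let $V$ be the free $R$-module on isotopy classes of crossingless diagrams. By Lemma~\ref{lem-flip}, these are the diagrams up to planar isotopy and flip moves. I would define a reduction system on $V$ whose rewriting rules are the elliptic-face relations, and check two properties.

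First, \emph{compatibility with flips}. Every rule, and the non-elliptic condition itself, must be invariant under flip moves. This holds because a flip does not change the faces of the diagram.

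Second, \emph{local confluence}: overlapping reductions must resolve to the same result. Two elliptic faces sharing edges give an ambiguity, for example:
\begin{itemize}
\item two adjacent $4$-gons;
\item a $4$-gon adjacent to a bigon;
\item a face touching the boundary next to a $Y$-configuration.
\end{itemize}
These are finitely many local pictures in a disk, or near a boundary edge. In each case I would compute both reductions explicitly and verify that they agree. Disjoint faces commute trivially. By Bergman's diamond lemma, the irreducible diagrams, which are exactly the non-elliptic ones, then form a basis of $V$ modulo the elliptic relations.

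\textbf{Crossing relations.} It remains to incorporate \eqref{positive-cross} and \eqref{negative-cross}. I would define a map from the free module on all web diagrams to $V/(\text{elliptic})$ by resolving all crossings. Then I must check that this map is invariant under the Reidemeister-type moves for webs: framed Reidemeister II and III, and moving a strand past a trivalent vertex. Each check is a local computation that reduces, via the elliptic relations, to identities among non-elliptic diagrams in a disk. These identities are the standard Kuperberg spider identities.

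With this, the map descends to $\cS(\fS)\to V/(\text{elliptic})$. It sends each element of $B_\fS$ to itself, so $B_\fS$ is linearly independent.

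\textbf{Main obstacle.} I expect the hardest step to be the confluence check near the boundary and marked points. There the relations \eqref{re-boundary-Y} interact with the height ordering of endpoints in (W3), so both reduction orders must be tracked. I would first reduce these cases to the disk computations carried out in \cite{SW} and \cite{frohman20223}.
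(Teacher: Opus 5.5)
Your route is the one the paper invokes: resolve crossings, prove confluence of the elliptic-face reductions on crossingless diagrams modulo flips, then show that crossing resolution descends to $\cS(\Sigma)$. The paper gives no argument beyond citing \cite[Proposition~4]{frohman20223} and the confluence method of \cite{SW}.

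The gap is in the last step, which you treat as a list of interior Reidemeister checks. For your resolution map $\Phi$ to be defined on isotopy classes, it must be invariant under every move relating diagrams of isotopic webs. Besides the framed Reidemeister~I move and the twist move at a trivalent vertex (both routine), these include isotopies that slide endpoints along $\partial\Sigma$ past one another. (W3) allows these because the endpoints sit at different heights. So a crossingless diagram must record the height order of its endpoints, and $V$ must consist of normalized diagrams, e.g.\ positively ordered on each boundary interval. $\Phi$ must first normalize by inserting crossings next to $\partial\Sigma$; otherwise $\Phi$ is not well defined. This is where heights enter, not the confluence check: the elliptic-face reductions never see heights.

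On a boundary circle with no marked point, which the paper's definition of marked surface allows, this step genuinely fails. Let $\Sigma$ be an annulus whose inner boundary circle $c$ is unmarked and whose outer circle has one marked point. Let $D$ be two disjoint arcs from the outer boundary to $c$, both oriented towards $c$. Let $D^*$ be $D$ with the heights of the two endpoints on $c$ exchanged. Slide the higher endpoint along $c$ until it has passed the lower one once, either clockwise or counterclockwise. This gives two diagrams of webs isotopic to $D$, each with one crossing next to $c$, and the two crossings have opposite signs. In both, the H-resolution contains a trivial-$Y$ face, formed by the new source vertex and the arc of $c$ between its two legs; it vanishes by \eqref{re-boundary-Y}. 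In both, the oriented smoothing is isotopic to $D^*$. Hence $D=q^{2/3}D^*=q^{-2/3}D^*$, so $(q^{2/3}-q^{-2/3})D^*=0$, and the non-elliptic diagram $D^*$ cannot be a basis element unless $q^{4/3}=1$.

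So your argument, and indeed the statement, needs every boundary component of $\Sigma$ to contain a marked point. Then each boundary component is an interval, no endpoint can be carried around, and the normalization is consistent. This covers every surface on which the paper uses the theorem ($\mathbb A'$, $P'$ and closed surfaces).

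A smaller correction: overlapping elliptic faces need not lie in a disk. Two $4$-gons sharing a pair of opposite edges form an annulus, and their two reductions differ by exchanging two parallel, oppositely oriented loops. So confluence must be checked in $V$ modulo flips, as you set it up, not only on disk-local pictures.
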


\begin{remark}
Compared with our definition of the $\SL$-skein algebra, the authors of~\cite{frohman20223} introduce an additional relation, namely relation~(5) in~\cite{frohman20223}, in their definition of the so-called reduced $SU_3$-skein algebra.
Consequently, in addition to the elliptic faces listed in~\eqref{elliptic-face}, their theory excludes one further type of elliptic face, depicted in $\input{H-boundary}$.
As a result, the basis of their reduced $SU_3$-skein algebra consists of crossingless web diagrams without faces of the types appearing in~\eqref{elliptic-face} or in $\input{H-boundary}$; see~\cite[Proposition~4]{frohman20223}.
\end{remark}

\subsection{Graded $\SL$-skein algebras}\label{sub-graded}

Let $C$ be an oriented closed curve in $\fS$, and let $W$ be a crossingless web diagram on $\fS$.
Define $i(C,W)$ to be the minimal geometric intersection number between $C$ and $W$.
We say that $W$ is in \emph{minimal intersection position} with respect to $C$ if this minimum is realized.

Assume that $W$ is in minimal intersection position with respect to $C$.
Placing $C$ above $W$, we define
\begin{equation}\label{def-i+-}
\begin{split}
i_1(C,W) &:= \text{the number of positive crossings between $C$ and $W$},\\
i_2(C,W) &:= \text{the number of negative crossings between $C$ and $W$}.
\end{split}
\end{equation}
We will prove that $i_1(C,W)$ and $i_2(C,W)$ are well-defined
(Proposition~\ref{prop-bigon}).

Let $\cC=\{C_1,\cdots, C_n\}$ be a collection of non-parallel disjoint closed curves in $\fS$. 
Define 
\begin{align*}
    i(\cC,W):= \sum_{1\leq j\leq n} i(C_j,W).
\end{align*}

\def\sp{R\text{-span}}

\def\sk{\cS(\fS)}
\def\gskk{\cS_{\cC}(\fS)}

For each $n\in\mathbb{N}$, define
\begin{align*}
    F_{\le n} := R\text{-span}\{\, W \in B_{\fS} \mid i(\cC,W) < n \,\}.
\end{align*}
We define the associated graded algebra
\begin{align}\label{def-graded-algebra}
    \cS_{\cC}(\fS) := \bigoplus_{n\in\mathbb{N}} F_{\le n} / F_{\le n-1},
\end{align}
where we set $F_{\le -1} := 0$.

For any $W \in \sk$ with $W \in F_{\le n} \setminus F_{\le n-1}$, we denote by
\[
    [W] := W + F_{\le n-1} \in \cS_{\cC}(\fS)
\]
its image in the graded algebra.
Proposition~\ref{def-B_S} immediately implies the following result.

\begin{lemma}\label{lem-graded}
The set $\{\, [W] \mid W \in B_{\fS} \,\}$ forms an $R$-basis of $\cS_{\cC}(\fS)$.
\end{lemma}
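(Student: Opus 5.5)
The plan is to show directly that the classes $[W]$, $W\in B_{\fS}$, span $\cS_{\cC}(\fS)$ and are linearly independent, using only that $B_{\fS}$ is an $R$-basis of $\cS(\fS)$ (Theorem~\ref{def-B_S}) and that the filtration is defined by spans of subsets of this basis. The key observation is that, since $i(\cC,W)\in\mathbb N$ for every basis element, the filtration pieces $F_{\le n}$ are spanned by the subsets
\[
B_{<n}:=\{W\in B_{\fS}\mid i(\cC,W)<n\},
\]
which form an increasing chain of subsets of a basis. Hence each $F_{\le n}$ is a free $R$-module with basis $B_{<n}$, and $F_{\le n-1}\subseteq F_{\le n}$ is a direct summand with complement spanned by $B_{<n}\setminus B_{<n-1}=\{W\mid i(\cC,W)=n-1\}$.

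First, I would verify that the quotient $F_{\le n}/F_{\le n-1}$ is free with basis given by the images of the $W$ with $i(\cC,W)=n-1$. Spanning is immediate, since $F_{\le n}$ is spanned by $B_{<n-1}$, whose elements vanish in the quotient, together with these $W$. For independence, suppose a finite combination $\sum c_W W$ of such $W$ lies in $F_{\le n-1}$. Then it equals a combination of elements of $B_{<n-1}$. Since these two sets of basis elements are disjoint and $B_{\fS}$ is linearly independent, all $c_W=0$.

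Next, I would check that for such a $W$ one has $W\in F_{\le n}\setminus F_{\le n-1}$, so that $[W]$ is exactly its image in the $n$-th graded piece. Summing over $n$ then gives the basis of the direct sum. The sets $\{W\mid i(\cC,W)=n-1\}$ partition $B_{\fS}$ as $n$ ranges over positive integers. Moreover, the $n=0$ piece is $F_{\le 0}=0$, because no $W$ has $i(\cC,W)<0$.

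There is no real obstacle here. The only point needing care is the indexing convention: with "$<n$" in the definition, $[W]$ sits in degree $i(\cC,W)+1$. The argument is otherwise purely formal linear algebra over $R$, requiring nothing about the multiplication.
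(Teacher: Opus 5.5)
Your proposal is correct and matches the paper's approach. The paper simply states that the lemma follows immediately from Theorem~\ref{def-B_S}, and you have written out the routine linear algebra behind that: each $F_{\le n}$ is spanned by a subset of the basis $B_{\fS}$, so each layer $F_{\le n}/F_{\le n-1}$ is free on the images of the $W$ with $i(\cC,W)=n-1$. Your remark that $[W]$ sits in degree $i(\cC,W)+1$ under the stated convention is accurate and does not affect the argument.
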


\section{Unbounded $\SL$-laminations and their shear coordinates}

In this section, we review unbounded $\SL$-laminations on a punctured surface and their shear coordinates, following \cite{ishibashi2025unbounded}. These results will be used in \S\ref{Sec-annulus} and \S\ref{Sec-pants}.
In this section, we will assume that the surface $\Sigma$ is a punctured surface. 

\subsection{Unbounded $\SL$-laminations}\label{sub-unbounded}

An \emph{unbounded $\SL$-lamination} on a punctured surface $\fS$ is an immersed, oriented, uni–trivalent graph $W$ in $\overline{\fS}$ such that each univalent vertex lies in $\mathcal{M}$, while the remaining part of $W$ is embedded in $\fS$.
Moreover, $W$ contains none of the elliptic faces listed in \eqref{elliptic-face}, nor any of the elliptic faces associated with punctures in $\mathcal{M}$, namely
\begin{align}
\input{elliptic-circle}
\end{align}

We consider unbounded $\SL$-laminations up to flip moves (Figure~\ref{fig:flip}) and the following equivalence moves:
\begin{align}\label{eq-flip-lines}
    \raisebox{-.15in}{
	\begin{tikzpicture}
		\tikzset{->-/.style=			
			{decoration={markings,mark=at position #1 with									{\arrow{latex}}},postaction={decorate}}}
        \filldraw[draw=white,fill=gray!20] (0,0) rectangle (2, 1);
        \draw[fill=black,line width =0.8pt] (0.2,0.5) circle (0.05cm);
        \draw[fill=black,line width =0.8pt] (1.8,0.5) circle (0.05cm);
        \draw[line width =0.8pt,decoration={markings, mark=at position 0.7 with {\arrow{<}}},postaction={decorate}]
  plot[smooth] coordinates {(0.2,0.5) (1,0.8) (1.8,0.5)};
  \draw[line width =0.8pt,decoration={markings, mark=at position 0.7 with {\arrow{>}}},postaction={decorate}]
  plot[smooth] coordinates {(0.2,0.5) (1,0.2) (1.8,0.5)};
	\end{tikzpicture}
}\quad
{\Large \sim} \quad
\raisebox{-.15in}{
	\begin{tikzpicture}
		\tikzset{->-/.style=			
			{decoration={markings,mark=at position #1 with										{\arrow{latex}}},postaction={decorate}}}
        \filldraw[draw=white,fill=gray!20] (0,0) rectangle (2, 1);
        \draw[fill=black,line width =0.8pt] (0.2,0.5) circle (0.05cm);
        \draw[fill=black,line width =0.8pt] (1.8,0.5) circle (0.05cm);
        \draw[line width =0.8pt,decoration={markings, mark=at position 0.7 with {\arrow{>}}},postaction={decorate}]
  plot[smooth] coordinates {(0.2,0.5) (1,0.8) (1.8,0.5)};
  \draw[line width =0.8pt,decoration={markings, mark=at position 0.7 with {\arrow{<}}},postaction={decorate}]
  plot[smooth] coordinates {(0.2,0.5) (1,0.2) (1.8,0.5)};
	\end{tikzpicture}
},
\end{align}
\begin{align}
    \raisebox{-.15in}{
	\begin{tikzpicture}
		\tikzset{->-/.style=
						{decoration={markings,mark=at position #1 with
										{\arrow{latex}}},postaction={decorate}}}
        \filldraw[draw=white,fill=gray!20] (-0.25,-0.2) rectangle (1.25, 1);
        \draw[fill=black,line width =0.8pt] (0.5,0) circle (0.05cm);
        \draw[line width =0.8pt] (0.5,0)--(-0.25,0.75);
        \draw[line width =0.8pt] (0.5,0)--(1.25,0.75);
        \draw[line width =0.8pt] (1,0.5)--(0,0.5);
	\end{tikzpicture}
}\quad
{\Large \sim} \quad
\raisebox{-.15in}{
	\begin{tikzpicture}
		\tikzset{->-/.style=
						{decoration={markings,mark=at position #1 with
								{\arrow{latex}}},postaction={decorate}}}
        \filldraw[draw=white,fill=gray!20] (-0.25,-0.2) rectangle (1.25, 1);
        \draw[fill=black,line width =0.8pt] (0.5,0) circle (0.05cm);
        \draw[line width =0.8pt] (0.5,0)--(-0.25,0.75);
        \draw[line width =0.8pt] (0.5,0)--(1.25,0.75);
	\end{tikzpicture}
},
\end{align}
\begin{align}
    \raisebox{-.10in}{
	\begin{tikzpicture}
		\tikzset{->-/.style=
					{decoration={markings,mark=at position #1 with
						{\arrow{latex}}},postaction={decorate}}}
        \filldraw[draw=white,fill=gray!20] (0,0) rectangle (0.8, 0.8);
        \draw[line width =0.8pt] (0.4,0.4) circle (0.3cm);
        \draw[fill=black,line width =0.8pt] (0.4,0.4) circle (0.05cm);
	\end{tikzpicture}
}\quad
{\Large \sim} \quad
\raisebox{-.10in}{
	\begin{tikzpicture}
		\tikzset{->-/.style=
			{decoration={markings,mark=at position #1 with
			{\arrow{latex}}},postaction={decorate}}}
        \filldraw[draw=white,fill=gray!20] (0,0) rectangle (0.8, 0.8);
        \draw[fill=black,line width =0.8pt] (0.4,0.4) circle (0.05cm);
	\end{tikzpicture}
},
\end{align}
\begin{align}\label{eq-circle}
    \begin{array}{c}\includegraphics[scale=0.6]{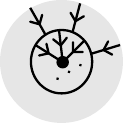}\end{array}
    \quad
{\Large \sim} \quad
\begin{array}{c}\includegraphics[scale=0.6]{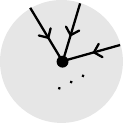}\end{array},
\end{align}
\begin{align}
   \text{the move obtained by reversing the orientations of the two diagrams in \eqref{eq-circle}.}
\end{align}
We use $\mathcal{L}(\Sigma)$ to denote the set of unbounded $\SL$-laminations on $\fS$.

\begin{remark}
In \cite{ishibashi2025unbounded}, the authors study signed rational unbounded $\SL$-laminations, where each endpoint of the underlying uni–trivalent graph is equipped with a sign `$+$' or `$-$', and each connected component is labeled by a rational number, modulo the equivalence relations given in \cite[Definition~2.6]{ishibashi2025unbounded}.  

The space $\mathcal{L}(\Sigma)$ considered here corresponds to the subspace consisting of $\SL$-laminations with only `$+$' signs at their endpoints and integral labels. This restricted setting is sufficient for our purposes and aligns naturally with the framework of the present paper; therefore, we adopt $\mathcal{L}(\Sigma)$ rather than the more general construction in \cite{ishibashi2025unbounded}.
\end{remark}

\def\bD{\mathbb{D}}

For each positive integer $k$, we use $\mathbb D_k$ to denote the marked surface obtained from a closed disk by removing $k$ points from its unique boundary component.

To study the unbounded $\SL$-laminations, we first review the unbounded essential webs in $\bD_2$ and $\bD_3$.

Let $E_L$ and $E_R$ denote the boundary intervals of a biangle $\bD_2$. A \emph{finite symmetric strand set} on $\bD_2$ is a pair $S_{\bD_2}=(S_L,S_R)$, where $S_L$ and $S_R$ are finite collections of pairwise disjoint oriented strands (i.e.\ germs of oriented arcs). For each $Z\in\{L,R\}$, the strands in $S_Z$ are located on $E_Z$, and the number of incoming (resp. outgoing) strands on $E_L$ is equal to the number of outgoing (resp. incoming) strands on $E_R$ (see \cite[Figure~5]{ishibashi2025unbounded}).

Given a symmetric strand set $S_{\mathbb D_2}=(S_L,S_R)$, the associated \emph{ladder-web} $W(S)$ on $\bD_2$ is constructed as follows. First, let $W_{\mathrm{br}}(S)$ denote the unique collection of oriented curves connecting the strands in $S_L$ to those in $S_R$ in an order-preserving and minimally intersecting manner. This collection is characterized by a pairing map
\[
f_W \colon S_L \longrightarrow S_R,
\]
which is an order-preserving bijection that sends each incoming (resp. outgoing) strand in $S_L$ to an outgoing (resp. incoming) strand in $S_R$. The ladder-web $W(S)$ is then obtained from $W_{\mathrm{br}}(S)$ by resolving each crossing using the H-resolution (Definition~\ref{def-resolution}).

\begin{definition}\cite[Definition~3.2]{ishibashi2025unbounded}\label{def-bigon}
An \textbf{asymptotically periodic symmetric strand set} $S_{\bD_2}=(S_L,S_R)$ on $\bD_2$ consists of countable collections $S_L$ and $S_R$ of pairwise disjoint oriented strands, where the strands in $S_Z$ are located on $E_Z$ and have no accumulation points, for each $Z\in\{L,R\}$. The oriented strands are required to be symmetric and periodic outside a compact set; see \cite[Figure~6]{ishibashi2025unbounded}.

More precisely, we require that there exists a compact strip $K\subset \bD_2\setminus \mathcal M$ such that:
\begin{itemize}
\item $K$ is bounded by two parallel arcs $\alpha_1$ and $\alpha_2$, transverse to the boundary intervals of $\bD_2$, with $\alpha_1\cup\alpha_2$ disjoint from the strand sets $S_L$ and $S_R$;
\item the pair $(S_L\cap K,\, S_R\cap K)$ is a finite symmetric strand set;
\item the orientation patterns of the strands in $S_L$ and $S_R$ lying in $\bD_2\setminus K$ are periodic, and the pairing map
\[
f_K \colon S_L\cap K \longrightarrow S_R\cap K
\]
of the finite symmetric strand set extends to an order-preserving bijection
\[
f \colon S_L \longrightarrow S_R
\]
that sends each incoming (resp. outgoing) strand of $S_L$ to an outgoing (resp. incoming) strand of $S_R$.
\end{itemize}

Unlike the finite case, the pairing map $f$ need not be unique, as it may depend on the choice of the compact strip $K$. Given such a pair $(S,f)$, one obtains a collection $W_{\mathrm{br}}(S,f)$ of oriented curves in mutual minimal position, and the associated ladder-web $W(S,f)$ is constructed in the same manner as in the finite case. We call $W(S,f)$ the \textbf{ladder-web associated with the pair $(S,f)$}.
We require that there is no crossing between oriented curves in $W_{\mathrm{br}}(S,f)$ whose endpoints lie in $\partial\bD_2\setminus K$.

\end{definition}

\begin{definition}\cite[Definition~3.3]{ishibashi2025unbounded}\label{def-essential-bigon}
An \textbf{unbounded essential web} in $\bD_2$ is an isotopy class of the ladder-web associated with a pair $(S,f)$ as above.
\end{definition}

We now look at webs in $\mathbb D_3$.

The \emph{honeycomb} of degree $d$ (with $d\in\BZ$), denoted by $\mathcal H_d$, is a crossingless web in $\mathbb D_3$.
Figure~\ref{crossbar} illustrates the honeycomb $\mathcal H_{-3}$; reversing the orientation yields $\mathcal H_{3}$.
By convention, $\mathcal H_0$ is the empty web in $\mathbb D_3$.
We define the \emph{weight} of the honeycomb $\mathcal H_d$ to be $|d|$.

\begin{figure}
	\centering
	\includegraphics[width=8cm]{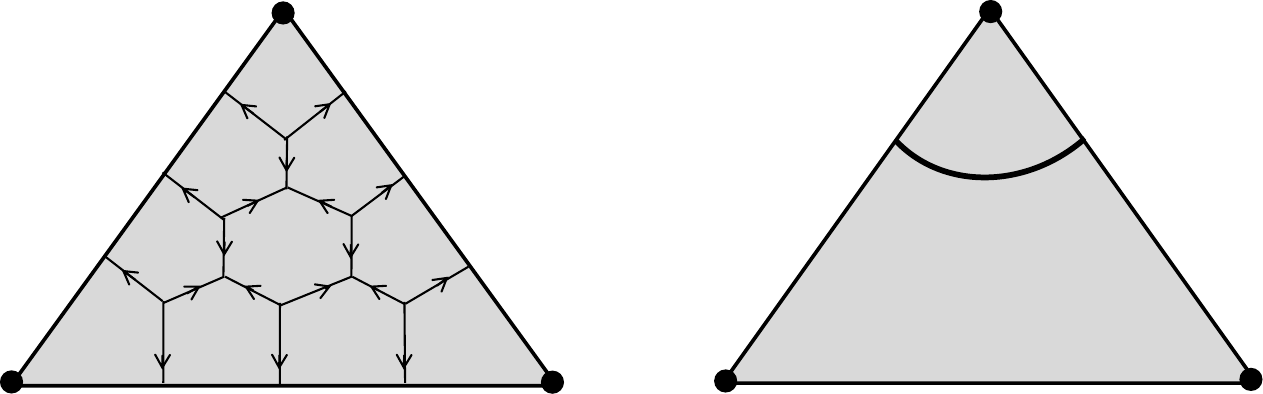}
	\caption{The left picture is an example for $\mathcal H_{-3}$ in $\mathbb D_3$, the right picture is an example for a corner arc in $\mathbb D_3$ (the orientation of the corner arc is arbitrary).}\label{crossbar}
\end{figure}

An essential web in $\bD_3$ consists of a unique (possibly empty) honeycomb component together with a collection of pairwise disjoint oriented arcs located at the corners of $\bD_3$ (see Figure~\ref{crossbar}). These oriented arcs are called corner arcs. As in the biangle case, we define the following unbounded version.

\begin{definition}\cite[Definition~3.6]{ishibashi2025unbounded}
An \textbf{unbounded essential web} in $\bD_3$ is the isotopy class of a disjoint union of a (possibly empty) essential web in $\bD_3$ and, at each corner, at most one semi-infinite periodic collection of corner arcs.
\end{definition}

To introduce the $\SL$-shear coordinates, we associate to each unbounded $\SL$-lamination a spiralling diagram as follows.

\begin{definition}\label{def-spiralling}
Let $W$ be an unbounded $\SL$-lamination on $\Sigma$. The associated \emph{spiralling diagram} $\widetilde W$ is constructed by the following procedure. For each puncture $p\in\mathcal{M}$, choose a sufficiently small disk neighborhood $D_p$ of $p$ and deform each end of $W$ incident to $p$ into an infinitely spiralling curve, spiralling in the clockwise direction, as illustrated in Figure~\ref{fig:spiralling}.
\end{definition}

\begin{figure}
    \centering
    \includegraphics[width=0.25\linewidth]{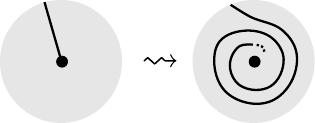}
    \caption{Clockwise direction of spiralling.}
    \label{fig:spiralling}
\end{figure}

We say that $\fS$ is 
{\bf triangulable} if each component of $\fS$ is not 
a sphere with 
less than three punctures.

    An embedding $c:(0,1)\rightarrow \fS$ is called an {\bf ideal arc} if both
$\bar c(0)$ and $\bar c(1)$ are punctures, where $\bar c\colon [0,1] \to \overline \fS$ is the `closure' of $c$. By an ideal arc we often mean its image in $\fS$. 
An {\bf ideal triangulation}, or a {\bf triangulation} $\Delta$ of $\fS$, is a collection of mutually disjoint ideal arcs in $\fS$ with the following properties:
\begin{enumerate}[label={\rm (T\arabic*)}]
    \item any two arcs in $\Delta$ are not isotopic;

    \item $\Delta$ is maximal under condition (T1);

    \item 
    the valence of $\Delta$ at each puncture is at least two.
\end{enumerate}

For each edge $E$ of $\Delta$, we take two disjoint copies $E'$ and $E''$ of $E$ such that no two edges in the collection
\[
\bigcup_{E\in\Delta}\{E',E''\}
\]
intersect. We define
\begin{align}
\label{widehat_lambda}
\widehat{\Delta} := \bigcup_{E\in\Delta}\{E',E''\},
\end{align}
and call $\widehat{\Delta}$ a \textbf{split} triangulation of $\Delta$.
Cutting the surface $\fS$ along the edges of $\widehat{\Delta}$ yields a collection of bigons and triangles. For each $E\in e(\Delta)$, we denote by $B_E$ the bigon in $\widehat{\Delta}$ bounded by $E'$ and $E''$, and we denote by $t(\Delta)$ the set of triangular faces of $\widehat{\Delta}$.

\begin{definition}
The spiralling diagram $\widetilde W$ is said to be \textbf{in good position} with respect to a split triangulation $\widehat{\Delta}$ if, for each $E\in e(\Delta)$ and each $T\in t(\Delta)$, both intersections $\widetilde W\cap B_E$ and $\widetilde W\cap T$ are unbounded essential webs.
\end{definition}

Any spiralling diagram arising from an unbounded lamination on $\Sigma$ can be isotoped into good position with respect to $\widehat{\Delta}$ \cite[Theorem~3.10]{ishibashi2025unbounded}.

\subsection{$\SL$-shear coordinates}

\def\LfS{\mathcal{L}(\Sigma)}

Before introducing the $\SL$–shear coordinates, we first fix some equivalent notations for honeycombs.
The following three diagrams all represent a honeycomb of weight $n$.
Its degree is $n$ (resp., $-n$) if the strands incident to the triangle are oriented towards (resp. away from) the triangle:
\begin{align}\label{eq-honeycomb-n}
\begin{array}{c}
    \includegraphics[scale=0.8]{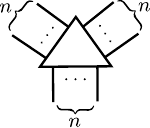}
\end{array}
\;\sim\;
\begin{array}{c}
    \includegraphics[scale=0.8]{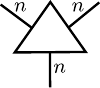}
\end{array}
\;\sim\;
\begin{array}{c}
    \includegraphics[scale=0.8]{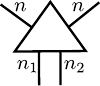}
\end{array}.
\end{align}

We equip $\Delta$ with two distinguished points in the interior of each edge and one distinguished point in the interior of each triangle. We denote the set of all such points by
$I(\Delta)$.

Let $W \in \mathcal{L}(\Sigma)$ be an $\SL$-lamination.  Let $\widetilde W$ denote the associated spiralling diagram,  placed in good position with respect to $\widehat{\Delta}$. Let $W^{\Delta}_{\mathrm{br}}$ be its braid representative. The shear coordinates of $W$ will be defined in terms of $W^{\Delta}_{\mathrm{br}}$.

For each $E\in \Delta$, let $Q_E$ be the unique quadrilateral having $E$ as its diagonal, viewed as the union of two triangles $T_L$ and $T_R$ together with the biangle $B_E$. The restriction of $W^{\Delta}_{\mathrm{br}}$ to each of $T_L$ and $T_R$ contains at most one honeycomb web, represented by a triangular symbol as in \eqref{eq-honeycomb-n}. Any strand of the braid representative $W^{\Delta}_{\mathrm{br}}\cap Q_E$ that is incident to the triangular symbol in $T_L$ (if it exists) is called a \emph{$T_L$-strand}. Similarly, one defines \emph{$T_R$-strands}. It may happen that a strand is both a $T_L$- and a $T_R$-strand, in which case it connects the two honeycombs. After removing all $T_L$- and $T_R$-strands, the remaining diagram consists of a collection of (possibly intersecting) oriented curves, which we call the \emph{curve components}; see Figure~\ref{fig:curve-components}.

\begin{definition}\cite[Definition~3.12]{ishibashi2025unbounded}
The $\SL$-shear coordinate system
\[
\mathbf{x}(W)
=\bigl(x_{i}(W))_{i\in I(\Delta)}
\in \mathbb{Z}^{I(\Delta)}
\]
is defined as follows. For each  edge $E\in \Delta$, the coordinates assigned to the four points in the interior of the quadrilateral $Q_E$ depend only on the restriction $W^{\Delta}_{\mathrm{br}}\cap Q_E$.

\begin{enumerate}
\item Each curve component contributes to the edge coordinates according to the rule illustrated in \eqref{fig:curve-components}.
\item The honeycomb in the triangle $T_L$ contributes to $\mathbf{x}(W)$ as shown in \eqref{fig:honeycombs}. 
\item The honeycomb in the triangle $T_R$ and the $T_R$-strands contribute symmetrically, via a $\pi$-rotation of the configuration in \eqref{fig:honeycombs}.
\end{enumerate}
\end{definition}

\begin{equation}\label{fig:curve-components}
    \begin{split}
        \begin{array}{c}\includegraphics[scale=0.8]{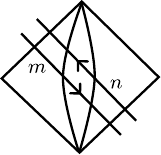}\end{array}
    \overset{{\bf x}}{\mapsto}
\begin{array}{c}\includegraphics[scale=0.8]{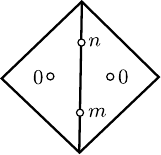}\end{array}\\
\begin{array}{c}\includegraphics[scale=0.8]{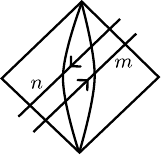}\end{array}
    \overset{{\bf x}}{\mapsto}
\begin{array}{c}\includegraphics[scale=0.8]{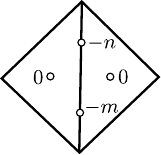}\end{array}
    \end{split}
\end{equation}

\begin{equation} \label{fig:honeycombs}
    \begin{split}
        \begin{array}{c}\includegraphics[scale=0.8]{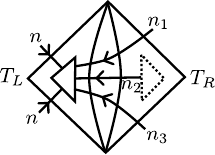}\end{array}
    \overset{{\bf x}}{\mapsto}
\begin{array}{c}\includegraphics[scale=0.8]{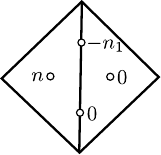}\end{array}\\
\begin{array}{c}\includegraphics[scale=0.8]{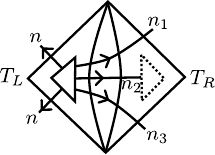}\end{array}
    \overset{{\bf x}}{\mapsto}
\begin{array}{c}\includegraphics[scale=0.8]{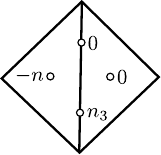}\end{array}
    \end{split}
\end{equation}

\begin{remark}
The Fock--Goncharov $p$ map \cite{fockgoncharov06} $\mathcal{A}\rightarrow \mathcal{X}$ relates the tropical $\mathcal{A}$ coordinates in \cite{douglas2024tropical} and the $\SL$-shear coordinates by $p^*(x_i)=\sum_j \varepsilon_{ij} a_j$. One can also derive the $\SL$-shear coordinates by $p$ map and the tropical $\mathcal{A}$ coordinates from the square case, which is shown in the computation right after \cite[Definition 4.13]{ALCO_2025__8_1_101_0} or \cite[arXiv version 1, Remark 6.4]{ALCO_2025__8_1_101_0}. 
\end{remark}

It is well-known that \cite[Theorem~3.19]{ishibashi2025unbounded}
$$\mathbf{x}\colon \LfS\rightarrow \mathbb{Z}^{I(\Delta)},\qquad 
W\mapsto \bigl(x_{i}(W))_{i\in I(\Delta)}$$
is a well-defined injection. 

Let $\Sigma_{0,3}$ denote the third–punctured sphere. Let $\lambda$ be the unique ideal triangulation of $\Sigma_{0,3}$, and label the set $I(\lambda)$ as in Figure~\ref{fig:sphere}. 
Since the triangulation of $\Sigma_{0,3}$ is unique, we denote 
$I(\lambda)$ by $I(\Sigma_{0,3})$. 
Define
\begin{align}\label{def-lambda}
\Lambda\subset \mathbb{Z}^{I(\Sigma_{0,3})}
:=
\Bigl\{
&x_{11}+x_{32}\ge 0,\quad
x_{12}+x_{31}+x_v+x_{v'}\ge 0,\quad
x_{31}+x_{22}\ge 0, \nonumber\\
&x_{32}+x_{21}+x_v+x_{v'}\ge 0,\quad
x_{21}+x_{12}\ge 0,\quad
x_{22}+x_{11}+x_v+x_{v'}\ge 0
\Bigr\}.
\end{align}

\begin{figure}
    \centering
    \includegraphics[width=0.25\linewidth]{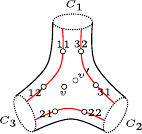}
    \caption{We depict $\Sigma_{0,3}$ as a pair of pants with its boundary removed. The triangulation is given by the three red curves. The set $I(\Sigma_{0,3})$ is labeled by $11,12,21,22,31,32,v,v'$, where $v$ (resp. $v'$) lies in the front (resp. back) triangle. The three punctures of $\Sigma_{0,3}$ (equivalently, the three boundary components of the pair of pants) are labeled by $C_1,C_2,C_3$.}
    \label{fig:sphere}
\end{figure}

We thank Tsukasa Ishibashi for pointing out the following result, which follows immediately from \cite[Theorem~3.19]{ishibashi2025unbounded} and \cite[Proposition~4.4]{ishibashi2024unbounded}.

\begin{lemma}\label{lem-domain}
The coordinate map
\[
\mathbf{x}\colon \mathcal{L}(\Sigma_{0,3}) \longrightarrow \mathbb{Z}^{I(\Sigma_{0,3})}
\]
is injective, and its image is precisely $\Lambda$.
\end{lemma}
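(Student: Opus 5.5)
Injectivity is immediate from \cite[Theorem~3.19]{ishibashi2025unbounded}, since $\mathcal{L}(\Sigma_{0,3})$ is a special case of the general unbounded lamination space. Hence the content of the lemma is the identification $\im\mathbf{x}=\Lambda$, and I will prove the two inclusions separately, using \cite[Proposition~4.4]{ishibashi2024unbounded} as the source of the description of the cone.

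For $\im\mathbf{x}\subseteq\Lambda$, fix $W\in\mathcal{L}(\Sigma_{0,3})$ and put its spiralling diagram in good position with respect to the split triangulation. The first step is to read off the six linear forms in \eqref{def-lambda} as tropical functions.
\begin{itemize}
\item The forms $x_{11}+x_{32}$, $x_{31}+x_{22}$ and $x_{21}+x_{12}$ each pair coordinates on two edges adjacent at a puncture $C_i$. Under the tropical $p$-map they should be tropical monomials measuring, for one orientation, how many strands wind around the puncture with the allowed (clockwise, $+$-signed) spiralling.
\item The forms involving $x_v+x_{v'}$ are the analogous counts for the opposite orientation. They also absorb the honeycomb contributions in the two triangles.
\end{itemize}
Concretely, I will verify that each form is additive over the components, in the form of curve components and honeycombs described in \eqref{fig:curve-components} and \eqref{fig:honeycombs}. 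Each elementary piece contributes nonnegatively to every form because all endpoint signs are `$+$'. This is the mechanism by which signed laminations with only $+$ signs cut out the "positive" part of the ambient image.

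For $\Lambda\subseteq\im\mathbf{x}$, I will invoke \cite[Proposition~4.4]{ishibashi2024unbounded}. That result describes, for $\Sigma_{0,3}$, the image of signed unbounded laminations whose ends at every puncture carry $+$ signs. It does so via a sign-change (tropical Weyl group action) at each puncture, and the $+$ condition becomes exactly nonnegativity of the corresponding "puncture invariants". These invariants are the six forms listed, two per puncture, one for each of the two simple roots of $\mathfrak{sl}_3$. Surjectivity of $\mathbf{x}$ onto $\mathbb Z^{I}$ for signed integral laminations is taken from \cite[Theorem~3.19]{ishibashi2025unbounded}. Combining the two, any point of $\Lambda$ comes from a signed lamination all of whose signs can be chosen to be $+$, i.e. from $\mathcal{L}(\Sigma_{0,3})$.

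The main obstacle is matching conventions. The identification of the six forms in \eqref{def-lambda} with the puncture invariants of \cite{ishibashi2024unbounded} depends on:
\begin{itemize}
\item the labeling in Figure~\ref{fig:sphere};
\item the clockwise spiralling convention of Definition~\ref{def-spiralling};
\item which triangle carries $v$ versus $v'$.
\end{itemize}
I would check this directly on generators. These are a peripheral loop around each $C_i$ in each orientation, the arcs between punctures, and the honeycombs $\mathcal H_{\pm1}$. For each, I compute $\mathbf{x}$ and confirm that the corresponding forms vanish or are positive as predicted. I would then check that these generators span the cone $\Lambda$ as a monoid up to the additive structure.
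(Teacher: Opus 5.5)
Your proposal follows the paper's route: the paper gives no argument beyond saying that the lemma follows immediately from \cite[Theorem~3.19]{ishibashi2025unbounded} (injectivity, and the coordinate description of signed laminations) and \cite[Proposition~4.4]{ishibashi2024unbounded} (the all-$+$ locus). Your reading of the six forms in \eqref{def-lambda} as puncture quantities is consistent with the paper, where they reappear as $n_{t1},n_{t2}$ in \eqref{eq:coordinates-pants} and are shown to be end counts in Proposition~\ref{prop:intersection-coordinates}; your piece-by-piece positivity sketch and the generator-spanning check are unnecessary (the latter would not give surjectivity anyway without an additivity property of $\mathbf{x}$), since both inclusions come from the two citations once the conventions are matched.
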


\section{General positions of basis elements in pants decompositions of a closed surface}

A pair of pants is a surface homeomorphic to $S^2$ with three open disks removed.

We use $P$ to denote a pair of pants, whose three boundary components are labeled by
$C_1, C_2,$ and $C_3$.
Let $P'$ be the marked surface obtained from $P$ by removing one point from each boundary component.

\begin{definition}\label{def-essential-pants}

A non-elliptic web diagram $W$ in $P'$ is called \textbf{essential} if it can be isotoped so that  the following conditions hold.
\begin{enumerate}[label={\rm (C\arabic*)}]
    \item Let $W_1$ be obtained from $W$ by performing a full twist along the curves $C_1, C_2, C_3$ in the direction indicated in Figure~\ref{fig:W-to-W1}.
    Then $W_1$ is in minimal intersection position with the blue curves $\gamma_1$, $\gamma_2$, and $\gamma_3$ shown in Figure~\ref{fig:W-to-W1}.

    \item 
    Let $C_1', C_2', C_3'$ be any closed curves in $P'$ parallel to $C_1, C_2, C_3$, respectively, and suppose that
    \[
    i(C_j, W) = i(C_j', W), \qquad 1 \le j \le 3.
    \]
    Note that $C_1', C_2', C_3'$ bound a pair of pants in $P'$, which we denote by $\widetilde P$.
    Then the intersection
    \[
    W \cap \bigl(P' \setminus \widetilde P\bigr)
    \]
    is as illustrated in Figure~\ref{fig:eneral-position}.
\end{enumerate}

\end{definition}

\begin{figure}
    \centering
    \includegraphics[width=0.48\linewidth]{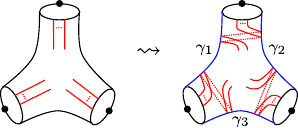}
    \caption{The procedure to obtain $W_1$ from $W$.}
    \label{fig:W-to-W1}
\end{figure}

\begin{figure}
    \centering
    \includegraphics[width=0.2\linewidth]{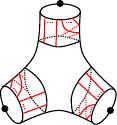}
    \caption{The picture for $W \cap (P' \setminus\widetilde P)$.}
    \label{fig:eneral-position}
\end{figure}

For example, the web diagram in Figure~\ref{fig:good0}(B) does not satisfy {\rm (C1)}, whereas the web diagram in Figure~\ref{fig:good0}(A) is essential. 
The web diagram in Figure~\ref{fig:good0}(D) satisfies neither {\rm (C1)} nor {\rm (C2)}, whereas the web diagram in Figure~\ref{fig:good0}(C) is essential.
The web diagram in Figure~\ref{fig:good0}(F) does not satisfy {\rm (C2)}, whereas the one shown in Figure~\ref{fig:good0}(E) is essential.
One can isotope the web diagram in Figure~\ref{fig:good0}(B) so that it satisfies {\rm (C1)}; however, the resulting diagram fails to satisfy {\rm (C2)}.
It is impossible to isotope the web diagrams in Figure~\ref{fig:good0}(D) and (F) so that they satisfy {\rm (C2)}.

\begin{figure}
    \centering
    \includegraphics[width=0.9\linewidth]{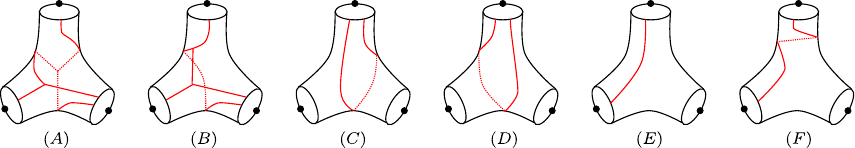}
    \caption{The picture for $W \cap (P' \setminus\widetilde P)$.}
    \label{fig:good0}
\end{figure}

Let $\Sigma_g$ denote a closed surface of genus $g$.
Throughout this section, we assume that $g\geq 2$.
Let $\{C_i\}_{i=1}^{3g-3}$ be a collection of nontrivial simple closed curves on $\Sigma_g$ that are pairwise non-homotopic; such a collection is called a \emph{pants decomposition} of $\Sigma_g$.
We call $\{C_i\}_{i=1}^{3g-3}$ \emph{oriented} if each $C_i$ is oriented.
For each $i=1,\dots,3g-3$, let $N(C_i)$ be a small closed annular neighborhood of $C_i$ in $\Sigma_g$.
Removing $\bigsqcup_{i=1}^{3g-3} \Int N(C_i)$ from $\Sigma_g$ yields a surface that is a disjoint union of pairs of pants.

A \emph{dual graph} is a trivalent graph $\Gamma$ embedded in $\Sigma_g$ such that each curve $C_i$ ($i=1,\dots,3g-3$) intersects $\Gamma$ in exactly one point, and the intersection of $\Gamma$ with each pair of pants consists of a single trivalent vertex.

Let $P$ be a pair of pants associated to $\{C_i\}_{i=1}^{3g-3}$. We say that $P$ is bounded by $C_{i_1}, C_{i_2}, C_{i_3}$, where $1 \le i_1,i_2,i_3 \le 3g-3$, if its three boundary components are precisely
\[
\{\text{one component of } \partial N(C_{i_1})\}
\;\sqcup\;
\{\text{one component of } \partial N(C_{i_2})\}
\;\sqcup\;
\{\text{one component of } \partial N(C_{i_3})\}.
\]
Note that the indices $i_1,i_2,i_3$ need not be distinct: if some $i_j$ coincide, then the two boundary components of $N(C_{i_j})$ appear as two distinct boundary components of $P$.
By abuse of notation, we will refer to the boundary components of $P$ simply as $C_{i_1}, C_{i_2}, C_{i_3}$.

\begin{definition}\label{def-general-position}
A non-elliptic web diagram $W$ in $\Sigma_g$ is said to be in \textbf{general position} with respect to the pants decomposition
$\{C_i\}_{i=1}^{3g-3}$ if, for each $1 \le i \le 3g-3$, the following conditions are satisfied:
\begin{enumerate}[label={\rm (\alph*)}, itemsep=0.3em]
    \item $W$ is in minimal position with respect to $\partial N(C_i)$;
    \item $W \cap \partial N(C_i) \subset \partial N(C_i) \setminus \Gamma$;
    \item Let $P$ be a pair of pants associated to $\{C_i\}_{i=1}^{3g-3}$, and let $P'=P\setminus (\partial P\cap \Gamma)$. Then $W\cap P'$ is an essential non-elliptic web diagram in $P'$ (Definition~\ref{def-essential-pants}).
\end{enumerate}
\end{definition}

When the pants decomposition is clear from the context, we will omit the phrase
``with respect to the pants decomposition $\{C_i\}_{i=1}^{3g-3}$.'' 
Since $W$ contains only finitely many edges and vertices, it can always be isotoped into general position.

In \S\ref{Sec-annulus} and \ref{Sec-pants}, we analyze the structure of $W$ within each annulus and each pair of pants.

\section{Webs in an annulus}\label{Sec-annulus}

In this section, we classify non-elliptic web diagrams in the annulus (Lemma~\ref{lem-A}).
More precisely, we show that there are exactly two types of non-elliptic web diagrams in the annulus.
One type is obtained by applying the H-resolution to a strict minimal braid in the annulus (Definition~\ref{def-braid}),
while the other type is illustrated in Figure~\ref{fig:line-circle-web}.

Based on this classification, we define two twist numbers, denoted by $t_1$ and $t_2$, for non-elliptic web diagrams in the annulus (Definition~\ref{def-twist-number}).
The main result of this section is Theorem~\ref{prop-twist}, which states that the coordinates defined in Definition~\ref{def-twist-number}
uniquely determine a non-elliptic web diagram in the annulus up to its signature (see \eqref{def-signature}).

The twist numbers introduced here will later be used to define twist numbers for non-elliptic web diagrams in $\Sigma_g$ $(g\geq 2)$,
within each annulus associated to a pants decomposition of $\Sigma_g$.

\subsection{Braids and braided webs in the annulus}
Let $\bA$ denote the annulus $S^1\times[0,1]$. We define 
\begin{align}\label{A01}
    A_0:= S^1\times\{0\},\qquad
    A_1:= S^1\times\{1\}.
\end{align}
For any point $(x,t)\in \bA$, we call $t$ the \emph{height} of the point.
We say that a point $(x,t)$ is \emph{higher} than another point $(x',t')$ if $t>t'$.

When drawing the annulus, we adopt the convention that the height increases from left to right; that is, the left (resp.\ right) boundary component is $A_0$ (resp.\ $A_1$).

An \emph{increasing curve} (resp. a \emph{decreasing curve}) in $\bA$ is a properly embedded oriented map
$c\colon[0,1]\to\bA$ such that
\[
c(t)\in S^1\times\{t\}
\quad
\text{(res. } c(t)\in S^1\times\{1-t\}\text{)}
\]
for all $t\in[0,1]$.

\begin{definition}\label{def-braid}
A \textbf{braid} $\mathcal{B}$ in $\bA$ is a finite collection of increasing curves, decreasing curves, and oriented properly embedded closed curves $\{c_1,\ldots,c_n\}$ in $\bA$ such that:
\begin{enumerate}[label={\rm (\alph*)}, itemsep=0.3em]
\item for $i\neq j$, the intersection $c_i\cap c_j$ is a finite set contained in the interior of $\bA$;
\item any two increasing curves (resp. any two decreasing curves) are disjoint; and
\item each closed curve is in minimal intersection position with any $c_i$.
\end{enumerate}
We say that $\mathcal{B}$ is \textbf{minimal} if the curves $c_1,\ldots,c_n$ are in minimal intersection position, and \textbf{strict} if it contains no closed curves.

We allow $\mathcal B$ to be empty. The empty braid is minimal and strict. 
\end{definition}

The crossingless web diagram in $\bA$ obtained from a braid $\mathcal{B}$ by resolving each crossing using the H-resolution (Definition~\ref{def-resolution}) is called a \emph{braided web} in $\bA$, denoted by $H(\mathcal B)$. 
We call $\mathcal B$ the \emph{braid representative} of $H(\mathcal B)$.
Note that a braided web may have different braid representatives. 

A braided web in $\bA$ is said to be \emph{minimal} (resp.\ \emph{strict}) if it has a minimal (resp.\ strict) braid representative.


Let $\mathbb A'$ be a marked surface obtained from $\bA$ by removing one point from each boundary component.
Then the above definitions also apply to $\mathbb A'$.
When depicting $\mathbb A'$, we will, for simplicity, omit the two marked points shown in Figure~\ref{fig:Aprime}.

\begin{figure}
    \centering
    \includegraphics[width=0.2\linewidth]{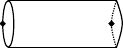}
    \caption{An illustration of $\mathbb A'$.}
    \label{fig:Aprime}
\end{figure}

\def\cB{\mathcal{B}}

Note that a braided web in $\bA'$ (or $\bA$) need not be non-elliptic.
The following lemma provides an equivalent characterization for non-ellipticity, together with a necessary condition.

\begin{lemma}\label{lem-minimal-non}
    Let $\cB=\{c_1,\ldots,c_n\}$ be a braid in $\bA'$. Then:
    \begin{enumerate}[label={\rm (\alph*)}, itemsep=0.3em]
        \item $H(\cB)$ is non-elliptic if and only if
        $H(\cB)$ contains no $4$-gons of the type shown in~\eqref{elliptic-face}.
        \item If $H(\cB)$ is non-elliptic, then $\cB$ is minimal.
    \end{enumerate}
\end{lemma}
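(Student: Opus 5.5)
We will analyze the possible faces of $H(\cB)$ directly, using only the local structure created by the H-resolution. Each crossing of $\cB$ is replaced by an $H$, that is, two trivalent vertices (one sink, one source) joined by a short rung. Consequently every vertex of $H(\cB)$ lies on a rung, and each rung corresponds to exactly one crossing.

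For part (a), the forward direction is trivial. For the converse we rule out the other elliptic faces of \eqref{elliptic-face} one at a time.

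First, $H(\cB)$ has no trivalent endpoints on the boundary, so there are no faces of the types \input{trivial-arc} or \input{trivial-Y}. Any arc of $H(\cB)$ ending on $\partial\bA'$ is part of an increasing or decreasing curve, and such a curve goes monotonically between $A_0$ and $A_1$. Hence it cannot cut off a boundary disk.

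Second, we rule out trivial loops. A closed loop component of $H(\cB)$ without vertices comes from a closed curve of $\cB$ having no crossings. By condition (c) of Definition~\ref{def-braid}, such a curve is in minimal position with the other curves. A contractible closed curve in $\bA'$ would bound a disk. Since it meets no curve, the disk contains only closed curves, and the innermost one of these would be a trivial loop in $\cB$. So we must use that a closed curve in minimal position is essential. Here we would argue that a contractible closed curve in a braid would have to be disjoint from everything (its minimal intersection number is zero). We would then note that the intended framework, closed curves of a braid, should be essential. If this is not automatic, the lemma's statement implicitly excludes it, and we would state the assumption.

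Third, we rule out $2$-gons. A $2$-gon face in $H(\cB)$ has two vertices, hence comes from one or two rungs. Two rungs correspond to two crossings between the same pair of curves $c_i,c_j$ whose arcs bound a bigon in $\bA'$. The key step is to show that such a bigon cannot occur. A face of $H(\cB)$ is bounded by alternating rung segments and curve segments, so a $2$-gon corresponds to two crossings of $c_i,c_j$ with no other curve entering between them. Because of the sink/source orientation pattern, though, two consecutive crossings of the same pair produce a $2$-gon only when the orientations of $c_i$ and $c_j$ are parallel in the induced sense. We would check both relative orientations carefully. This orientation bookkeeping is where we expect the main difficulty. The claim to establish is that every bigon region of the braid (two arcs of $c_i,c_j$ between consecutive crossings, containing no other curve strands) yields, after H-resolution, either a $2$-gon face or a $4$-gon face. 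Which one appears depends on whether the two crossings' H's are oriented compatibly. In our setting an innermost bigon between two curves gives a face with exactly two rungs. Counting the boundary edges of that face gives 2 or 4 sides, depending on whether the rungs lie on the same or on opposite sides. One then shows that the $2$-sided case is forced to be a $4$-gon of the listed elliptic type, or cannot occur with increasing/decreasing curves. So a $2$-gon face forces an innermost bigon, and we show that the innermost bigon always yields a $4$-gon of the form in \eqref{elliptic-face}. Therefore, excluding $4$-gons excludes everything.

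For part (b), suppose $\cB$ is not minimal. Then by the bigon criterion on the annulus there are curves $c_i,c_j$ forming an embedded bigon. An innermost such bigon $D$ may still be crossed by arcs of other curves. Since curves of $\cB$ pairwise intersect minimally among those passing through $D$ (or else we pass to a smaller bigon), every other arc crosses $D$ entering through one side and leaving through the other. We then take a bigon minimal with respect to inclusion among all bigons formed by any pair of curves of $\cB$. A standard argument shows that such a minimal bigon contains no curve strands in its interior: any strand crossing it would create a smaller bigon or triangle, and triangles can be pushed off with Reidemeister III moves, which preserve $H(\cB)$ up to flip moves and isotopy. For this we would use Lemma~\ref{lem-flip} and the fact that the H-resolution is compatible with the move. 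Once we have an empty bigon, part (a)'s analysis shows that $H(\cB)$ contains an elliptic $4$-gon (or $2$-gon), which contradicts non-ellipticity. The delicate point is justifying the triangle-pushing compatibility; we would try first to avoid it by choosing the bigon minimal in area among all bigons formed by sub-arcs of curves of $\cB$.
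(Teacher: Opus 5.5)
Your part (a) goes further than the paper, which simply asserts that only $4$-gons can occur. However, your $2$-gon step is asserted rather than proved. The missing observation is short. In a bigon between an increasing curve $c$ and a decreasing curve $d$, the two boundary arcs are oppositely oriented, because height increases from one corner to the other along $c$ and decreases along $d$. So at each corner one side enters and the other leaves, both rungs lie on the resolved face, and that face is a $4$-gon, never a $2$-gon. Bigons involving a closed curve are excluded by condition (c). Your worry about inessential closed curves is legitimate: the statement tacitly requires the closed curves of a braid to be essential.

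The genuine gap is in (b), in the case where closed curves of $\mathcal B$ pass through the innermost bigon $D$. (You handle monotone strands correctly by passing to a smaller bigon.)

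\begin{itemize}
\item An essential closed curve meets each of $c$ and $d$ exactly once. So it crosses $D$ from the $c$-side to the $d$-side and cuts $D$ into two corner triangles and some quadrilaterals, never into a smaller bigon. Choosing $D$ of minimal area therefore cannot remove it.
\item Your other remedy, that a Reidemeister III move preserves $H(\mathcal B)$ up to isotopy, is false in general. Since every edge of $H(\mathcal B)$ joins a sink to a source, an empty region with $s$ sides resolves to a face with $s+k$ edges, where $k$ is the number of corners at which one side enters and the other leaves. For a triangle this gives $k\in\{1,3\}$, with $k=3$ exactly when the sides are cyclically oriented.
\item For a non-cyclic triangle, the webs before and after the move each contain a $4$-gon, attached to different boundary strands, so they are not isotopic.
\end{itemize}

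The missing idea is exactly this dichotomy. Let $\gamma$ be the closed curve in $D$ nearest the corner $b_1$; the triangle it cuts off is empty.
\begin{itemize}
\item If that triangle is non-cyclic, $H(\mathcal B)$ already contains a $4$-gon.
\item If it is cyclic, its resolution is a hexagon with six legs in the same cyclic order before and after the move. So pushing $\gamma$ across $b_1$ leaves $H(\mathcal B)$ unchanged and reduces the number of closed curves crossing $D$.
\end{itemize}
With this added, induction ends with an empty bigon and your route works. It is then a genuine alternative to the paper's proof, which keeps $\mathcal B$ fixed and reads off a $4$-gon directly from the orientation pattern of the closed curves crossing $D$ (at a corner triangle, or between neighbouring closed curves).
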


\begin{proof}
(a)
The $\Rightarrow$ direction follows directly from Definition~\ref{def-basis}.
For the $\Leftarrow$ direction, observe that among the elliptic faces listed in~\eqref{elliptic-face}, the only ones that can occur in $H(\cB)$ are $4$-gons. 
Hence the absence of such $4$-gons implies that $H(\cB)$ is non-elliptic.

\medskip
(b)
Suppose, for contradiction, that $\cB$ is not minimal.
Then $\cB$ contains a bigon.
By Definition~\ref{def-braid}(b) and (c), this bigon $\mathbb D$ is formed by one decreasing curve and one increasing curve.
We may assume that $\mathbb D$ contains no smaller bigons in its interior.
Let $b_1$ and $b_2$ be the two crossings that bound the bigon $\mathbb D$.

First, assume that no other curves pass through $\mathbb D$.
Then the H-resolutions of $b_1$ and $b_2$ produce a $4$-gon in $H(\cB)$, contradicting the assumption that $H(\cB)$ contains no $4$-gons.

Next, suppose that there are curves $c_{i_1},\ldots,c_{i_m}$ passing through $\mathbb D$.
If some $c_{i_j}$ is an increasing or decreasing curve, then by Definition~\ref{def-braid}(b) it intersects only one boundary edge of $\mathbb D$, which would create a smaller bigon inside $\mathbb D$, contradicting the minimality of $\mathbb D$.
Therefore, all curves $c_{i_1},\ldots,c_{i_m}$ must be closed curves.
By Definition~\ref{def-braid}(c), these closed curves are pairwise disjoint.

If all curves $c_{i_1},\ldots,c_{i_m}$ have the same orientation, let $c_{i_j}$ (resp.\ $c_{i_k}$) be the curve closest to the crossing $b_1$ (resp.\ $b_2$).
By Definition~\ref{def-braid}(c), the curve $c_{i_j}$ (resp.\ $c_{i_k}$) intersects $\mathbb D$ in exactly two points, say $b_3,b_4$ (resp.\ $b_5,b_6$).
Resolving the crossings $b_i$ for $1\le i\le 6$ via the H-resolution produces a $4$-gon in $H(\cB)$, again a contradiction.

Now assume that the curves $c_{i_1},\ldots,c_{i_m}$ do not all have the same orientation.
Order them by height, so that $c_{i_j}$ lies above $c_{i_k}$ whenever $j>k$.
If there exists $2\le j\le m-1$ such that $c_{i_{j-1}}$ and $c_{i_{j+1}}$ have the same orientation, while $c_{i_j}$ has the opposite orientation, then by Definition~\ref{def-braid}(c) the curves
$c_{i_{j-1}}$, $c_{i_j}$, and $c_{i_{j+1}}$ intersect $\mathbb D$ in exactly two points each, say
$d_1,d_2$, $d_3,d_4$, and $d_5,d_6$, respectively.
Resolving the crossings $d_i$ for $1\le i\le 6$ via the H-resolution again produces a $4$-gon in $H(\cB)$, a contradiction.

Otherwise, there exists $1\le k\le m-1$ such that $c_{i_1},\ldots,c_{i_k}$ have the same orientation, while
$c_{i_{k+1}},\ldots,c_{i_m}$ all have the opposite orientation.
By Definition~\ref{def-braid}(c), the curves
$c_{i_1}$, $c_{i_k}$, $c_{i_{k+1}}$, and $c_{i_m}$
each intersect $\mathbb D$ in exactly two points, say
$e_1,e_2$, $e_3,e_4$, $e_5,e_6$, and $e_7,e_8$, respectively.
Resolving the crossings $e_i$, for $1\le i\le 8$, $b_1$, and $b_2$ via the H-resolution again yields a $4$-gon in $H(\cB)$, a contradiction.

These contradictions show that $\cB$ must be minimal.
\end{proof}

\subsection{Non-elliptic web diagrams in the annulus}

In this subsection, we will classify 
non-elliptic web diagrams in $\bA$ (Lemma~\ref{lem-A} and Proposition~\ref{prop-bigon}). We first introduce some notations. 

Let $c$ be a decreasing or increasing curve in $\mathbb A'$. We define its \emph{twist number} $t(c)$ as illustrated in Figure~\ref{fig:pos-neg}.

\begin{figure}
    \centering
    \includegraphics[width=0.5\linewidth]{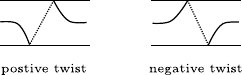}
    \caption{The twist number of the left (resp.\ right) picture is $1$ (resp.\ $-1$).}
    \label{fig:pos-neg}
\end{figure}

For $n\in \mathbb N$ and $t\in\mathbb Z$ ($t=0$ if $n=0$),
we use the symbol
\[
\begin{array}{c}\includegraphics[scale=1.5]{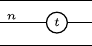}\end{array}
\]
to denote the diagram obtained from $n$ parallel strands by introducing twists so that the total twist number is $t$; see Figure~\ref{fig:twist-number} for one example.

\begin{figure}[H]
    \centering
    \includegraphics[width=0.5\linewidth]{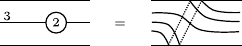}
    \caption{One example of twisted parallel strands.}
    \label{fig:twist-number}
\end{figure}

For $m,n\in \mathbb N$,
we introduce the following notations:
\begin{align}
    \begin{array}{c}\includegraphics[scale=1.5]{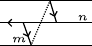}\end{array}
    &:= \text{the H-resolution of }
    \begin{array}{c}\includegraphics[scale=1.5]{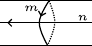}\end{array},\\
    \begin{array}{c}\includegraphics[scale=1.5]{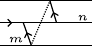}\end{array}
    &:= \text{the H-resolution of }
    \begin{array}{c}\includegraphics[scale=1.5]{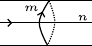}\end{array},\\
    \begin{array}{c}\includegraphics[scale=1.5]{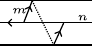}\end{array}
    &:= \text{the H-resolution of }
    \begin{array}{c}\includegraphics[scale=1.5]{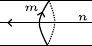}\end{array},\\
    \begin{array}{c}\includegraphics[scale=1.5]{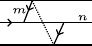}\end{array}
    &:= \text{the H-resolution of }
    \begin{array}{c}\includegraphics[scale=1.5]{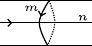}\end{array},
\end{align}
where on right hand side of `$:=$' the integer $n$ or $m$ represents the number of copies of the corresponding curves. 

The following describes certain planar isotopies for web diagrams in $\bA$, which can be checked straightforwardly.

\begin{lemma}\label{lem-isotopy}
Let $m,n\in\BN$ and $t\in\BZ$ (with $t=0$ if $n=0$).
Then the following planar isotopies hold:
\begin{align*}
\begin{array}{c}
    \includegraphics[scale=1.5]{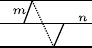}
\end{array}
&\;\sim\;
\begin{array}{c}
    \includegraphics[scale=1.5]{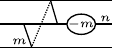}
\end{array},\\[1ex]
\begin{array}{c}
    \includegraphics[scale=1.5]{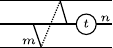}
\end{array}
&\;\sim\;
\begin{array}{c}
    \includegraphics[scale=1.5]{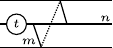}
\end{array}.
\end{align*}
Here the orientations of the diagrams are arbitrary (there are exactly two opposite choices for each diagram), provided that the orientations are compatible and are preserved within each isotopy.
\end{lemma}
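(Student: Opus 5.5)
We verify each of the two isotopies directly, as a sequence of flip moves and planar isotopies; by Lemma~\ref{lem-flip}, this is enough to show that the crossingless diagrams on the two sides represent the same web. The plan is to pass to braid representatives. Each boxed symbol is, by definition, the H-resolution of a collection of parallel increasing/decreasing curves, and the twist box with $(n,t)$ is $n$ parallel strands carrying total twist $t$. So each side of each isotopy is $H(\mathcal B)$ for an explicit strict braid $\mathcal B$ in $\bA$.

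\textbf{Step 1 (braid level).} Write down the braids $\mathcal B_{\mathrm{left}}$ and $\mathcal B_{\mathrm{right}}$ underlying the two sides. Then check that they are isotopic through braids, i.e.\ through families of increasing/decreasing curves that keep conditions (a)--(c) of Definition~\ref{def-braid}. Concretely, the $n$ twisted parallel strands slide through the ladder of $m$ transverse strands. Because the twist is a rotation of the $S^1$-factor applied uniformly to the $n$ strands, we can push it across the band of $m$ curves by an ambient isotopy of $\bA$. Near the boundary this isotopy is a rotation in the $S^1$-direction. Pushing the twist across changes neither the number of crossings between the $n$-family and the $m$-family nor their cyclic order along each curve.

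\textbf{Step 2 (lifting to H-resolutions).} Upgrade the braid isotopy to an isotopy of the resolved webs. Since the crossing pattern, and in particular the sequence of H's along each strand, is combinatorially unchanged, the H-resolution commutes with the isotopy. Each elementary move in which a strand passes a crossing of the ladder becomes, after resolution, a planar isotopy or a flip move of the type in Figure~\ref{fig:flip}. Orientation compatibility is preserved at every stage. Since both orientation choices are related by global reversal, it suffices to treat one orientation.

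\textbf{Step 3 (second isotopy).} Handle the second isotopy by the same argument, with increasing and decreasing strands interchanged, or by applying a reflection of $\bA$ that exchanges $A_0$ and $A_1$.

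The main obstacle is the bookkeeping in Step 2. We must confirm that moving a twist past an H-ladder never creates a new face or crossing, so the result is still crossingless and the identification is honest. If we can't confirm this directly, the plan is to argue locally: it suffices to move one strand past one rung, which is a single flip. Induction on $n$ and on $|t|$ then gives the general case.
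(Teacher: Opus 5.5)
You have a genuine gap: your Step 1 only works when the two sides of an isotopy are $H(\mathcal B)$ and $H(\mathcal B')$ for braids that are isotopic \emph{through braids}. That does cover sliding the twist of the $n$ strands across closed curves of a fixed orientation. This is a height-dependent rotation of the $S^1$-factor, and it lifts to the H-resolution with nothing to check: strands are pairwise disjoint and circles are pairwise disjoint, so no triple points or bigons occur, and no flip moves are needed.

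But this cannot be the whole content of the lemma. It is what Lemma~\ref{lem-A}(a) invokes to bring an arbitrary minimal braid made of $n\ge 1$ increasing curves and some closed curves, of either orientation, into the line-circle form of Figure~\ref{fig:line-circle-web}. In that form all circles are parallel copies of one oriented curve, and only their number $m$ enters ${\bf t}(W)=(n,0,t,m)$. This fits the statement's remark that each picture carries one of exactly two opposite orientations.

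With the strand orientation fixed, a circle of the other orientation produces H's slanted the other way. The two underlying braids are then not isotopic through braids: the circles have opposite orientations and every intersection point changes sign. So your Step~2 claim that ``the sequence of H's along each strand is combinatorially unchanged'' is exactly what fails, and Step~3 inherits the same problem.

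The missing idea is a web-level isotopy that comes from no braid isotopy. Resolve one circle crossing the $n$ increasing strands. You get an essential cycle with $2n$ vertices alternating between sinks and sources; each sink carries a leg to $A_0$ and each source a leg to $A_1$. Read the legs along the cycle in the $S^1$-direction:
\begin{itemize}
\item for one orientation of the circle you get $L_1R_1L_2R_2\cdots L_nR_n$;
\item for the other you get $R_1L_1R_2L_2\cdots R_nL_n$, i.e.\ $L_i$ is followed by $R_{i+1}$ instead of $R_i$.
\end{itemize}
So the two resolved pictures are the same embedded graph once the $A_1$-legs are cyclically shifted by one slot. In $\bA'$ that shift moves one leg across the marked point, so it changes the total twist by $\pm1$, with the sign fixed by the convention of Figure~\ref{fig:pos-neg}. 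Doing this circle by circle shows that reversing the orientation of the $m$ circles is a planar isotopy, at the cost of changing $t$ by $\pm m$.

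This is the nontrivial check hidden behind the paper's ``can be checked straightforwardly''. Your braid-level argument cannot produce it.
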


We define \emph{line-circle webs} in $\bA$ or $\bA'$
to be the web diagrams illustrated in Figure~\ref{fig:line-circle-web}.

\begin{figure}[h]
    \centering
    \includegraphics[width=0.6\linewidth]{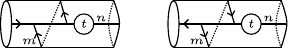}
    \caption{Line-circle webs in $\bA$ or $\bA'$, where $m$ is a positive integer, $n\in\mathbb N$ and $t\in\mathbb Z$.
    We require that $t=0$ if $n=0$.}
    \label{fig:line-circle-web}
\end{figure}

When $n=0$, the line-circle web in Figure~\ref{fig:line-circle-web} consists of $m$ parallel copies of an oriented closed curve.

The following will be used in Definition~\ref{def-twist-number} to define the coordinates of non-elliptic braided webs in
$\bA'$.

\begin{lemma}\label{lem-A}
    Let $W$ be a non-elliptic braid web in $\bA'$. Then:
    \begin{enumerate}[label={\rm (\alph*)}, itemsep=0.3em]
        \item $W$ is either a strict minimal braided web or a line-circle web.

        \item If $W$ is a strict minimal braided web, then there exists a unique strict minimal braided $\mathcal B$ such that $W=H(\cB)$. 
    \end{enumerate}
\end{lemma}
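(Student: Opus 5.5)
We work with a braid representative $\cB$ of $W$, so $W=H(\cB)$. By Lemma~\ref{lem-minimal-non}(b), $\cB$ is minimal. Let $\cB_0\subset\cB$ be the set of closed curves.

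\textbf{Part (a).} If $\cB_0=\emptyset$, then $W$ is a strict minimal braided web and we are done. So assume $\cB_0\neq\emptyset$. Every closed curve in $\bA'$ is either trivial or parallel to the core. A trivial one would bound an elliptic face after H-resolution, or would violate minimality. So all curves of $\cB_0$ are core-parallel, and by Definition~\ref{def-braid}(c) they are pairwise disjoint.

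Each core-parallel closed curve meets every increasing or decreasing curve exactly once. Order the closed curves by height. Repeating the 4-gon arguments from the proof of Lemma~\ref{lem-minimal-non}(b) gives the following constraints.
\begin{enumerate}[label={\rm (\roman*)}]
\item Two closed curves of opposite orientation that are adjacent in height, together with an arc crossing both, produce a 4-gon. So all closed curves have the same orientation.
\item Consider an increasing curve $c$ and a decreasing curve $d$ both crossing the closed curves. Near a closed curve, the H-resolutions of the crossings with $c$ and with $d$, combined with the crossing between $c$ and $d$, create a 4-gon. So the arcs cannot include both increasing and decreasing curves.
\item The remaining arcs then all point the same way (say increasing). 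An arc whose orientation relative to the closed-curve orientation is the wrong one gives a 4-gon between two consecutive closed curves. Hence the compatible arcs must run parallel.
\end{enumerate}
Their twisting and the circles then assemble exactly into the picture of Figure~\ref{fig:line-circle-web}, with $m$ the number of circles and $n$ the number of parallel arcs with total twist $t$. The two choices of orientation give the left and right configurations. Lemma~\ref{lem-isotopy} is used here to slide the twist region past the circles, so that the result matches the normal form.

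\textbf{Part (b).} Existence is the assumption. For uniqueness, suppose $\cB$ and $\cB'$ are strict minimal braids with $H(\cB)=H(\cB')$. Recover the braid from the web intrinsically. The boundary points on $A_0$ and $A_1$ together with their orientations determine the numbers of increasing and decreasing curves. In a strict minimal braided web, each H-shaped pair of trivalent vertices (an edge joining a sink and a source) corresponds to exactly one crossing. Following a strand from a boundary point and turning in the prescribed direction at each such H reconstructs each curve $c_i$, including how it winds around the annulus. So the pairing of endpoints and the twist numbers of the curves are determined by $W$. In $\bA'$, with marked points fixed, increasing and decreasing curves in minimal position are determined up to isotopy by their endpoints and twist numbers. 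Hence $\cB=\cB'$.

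\textbf{Main obstacle.} The hardest step is the 4-gon case analysis in part (a), especially (ii) and (iii), where the arcs interact with the closed curves. To handle it, I would first take a closed curve that is innermost in height among those adjacent to an offending configuration. Then I would exhibit the 4-gon explicitly from the two or three crossings along it, mirroring the argument in the proof of Lemma~\ref{lem-minimal-non}(b).
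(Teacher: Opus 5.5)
\textbf{Part (a) has a genuine gap.} The 4-gon claims in your steps (ii) and (i) are false, so the case analysis cannot be completed as planned.

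For (ii), take one core circle $C$, one increasing curve $c$ and one decreasing curve $d$, with $c\cap d=\emptyset$ and each meeting $C$ once. Then $\cB=\{c,d,C\}$ is a minimal braid. Every complementary region of $\cB$ meets $\partial\bA'$, and H-resolution preserves complementary regions, so every face of $H(\cB)$ meets the boundary. Hence $H(\cB)$ is non-elliptic. Its endpoints carry both signs, so it is not a line-circle web.

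The idea you are missing is that a braided web has many braid representatives. In $H(\cB)$ the four trivalent vertices form a $4$-cycle around the core, in which the two middle edges alternate with the two arcs of $C$. Reading the arcs of $C$ as the middle edges instead exhibits the same web as $H$ of a strict braid in which an increasing and a decreasing curve cross twice around the core. This is the elimination move of Figures~\ref{fig:replace1} and~\ref{fig:replace2}. So ``the chosen representative contains a closed curve'' does not imply ``$W$ is not a strict minimal braided web'', and your split at $\cB_0\neq\emptyset$ conflates the two.

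The paper's proof is built on exactly this point. It assumes $W$ is not strict minimal braided and that $\cB$ has curves of both directions. It then:
\begin{enumerate}
\item reorders the crossings along each closed curve (Figure~\ref{fig:reorder}), which is an identity only in the graded algebra $\cS_C(\bA')$;
\item eliminates the closed curves and deletes bigons;
\item uses Lemma~\ref{lem-graded} to get $W=H(\cB')$ with $\cB'$ strict minimal, a contradiction.
\end{enumerate}
Only the one-direction case is then normalized via Lemma~\ref{lem-isotopy}.

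\textbf{Step (i) fails too.} For two adjacent core circles crossed by increasing arcs, each face between them picks up exactly two middle edges and is a hexagon, whatever the relative orientation of the circles. Circle orientations are normalized by isotopy, not by contradiction. Sliding each sink back along the circle to the neighbouring H turns increasing arcs crossing a circle of one orientation into re-paired, hence re-twisted, arcs crossing a circle of the other orientation. With no arcs at all, oppositely oriented circles genuinely coexist.

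\textbf{Step (iii) is vacuous.} Increasing curves are pairwise disjoint by Definition~\ref{def-braid}(b), and they all cross a given circle the same way. Consequently, your plan of locating a 4-gon next to an innermost circle cannot succeed.

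\textbf{Part (b) takes a different route but is incomplete.} The paper transports the planar isotopy between $H(\cB)$ and $H(\cB')$ given by Lemma~\ref{lem-flip} to the braids themselves. Your reconstruction instead hinges on recognising the middle edges. Your criterion, an edge joining a sink and a source, is satisfied by every interior edge of a web. The example above shows one web admitting two different readings. You therefore still need to show that strictness and minimality single out the middle edges; there, the alternative reading is precisely the one that produces a closed curve. As written, your argument never uses strictness.
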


\begin{proof}
(a) Suppose that $\cB$ is a braid representative of $W$.
Lemma~\ref{lem-minimal-non}(b) implies that $W$ is minimal.

Assume that $W$ is not a strict minimal braided web. It then suffices to show that $W$ is a line-circle web.
Note that $\cB$ contains at least one oriented closed curve $c$ because $W$ is not a strict minimal braided web.

We first show that $\cB$ cannot contain both increasing and decreasing curves.
Suppose, to the contrary, that it does.
We will construct a strict minimal braid $\cB'$ such that $H(\cB')=W$.

Let $C$ be a closed curve in $\bA'$ parallel to a boundary component.
Recall the graded algebra $\cS_C(\bA')$ defined in \S\ref{sub-graded}.
It is straightforward to check that the two braids shown in Figure~\ref{fig:reorder}
represent the same element of $\cS_C(\bA')$.
By repeatedly applying the move in Figure~\ref{fig:reorder} to $\cB$, we obtain a new braid $\cB_1$ such that
\[
[H(\cB)] = [H(\cB_1)] \in \cS_C(\bA'),
\]
and such that the local configuration of $\cB_1$ around each closed curve is as in Figure~\ref{fig:ordering}.

Next, note that the two braids in Figure~\ref{fig:replace1}
(resp.\ Figure~\ref{fig:replace2}) represent the same element of $\cS(\bA')$.
Applying the moves in Figures~\ref{fig:replace1} and~\ref{fig:replace2} to each closed curve in $\cB_1$,
we obtain a new braid $\cB_2$ such that
\[
H(\cB_1)=H(\cB_2)\in \cS(\bA'),
\]
and $\cB_2$ is strict.
Let $\cB'$ be the minimal strict braid obtained from $\cB_2$ by removing all bigons.
Then
\[
[W]=[H(\cB)]=[H(\cB_1)]=[H(\cB_2)]=[H(\cB')]\in \cS_C(\fS).
\]
By Lemma~\ref{lem-graded}, this implies that $W=H(\cB')$,
contradicting the assumption that $W$ is not a strict minimal braided web.

Therefore, $\cB$ cannot contain both increasing and decreasing curves.
The claim now follows from Lemma~\ref{lem-isotopy}.

\smallskip
(b) Suppose that $W=H(\cB)=H(\cB')$ for two strict minimal braids $\cB$ and $\cB'$.
Lemma~\ref{lem-flip} implies that $H(\cB)$ and $H(\cB')$ differ by a planar isotopy.
Consequently, $\cB$ and $\cB'$ themselves are related by a planar isotopy, and hence $\cB=\cB'$.
\end{proof}

\begin{figure}
    \centering
    \includegraphics[width=0.5\linewidth]{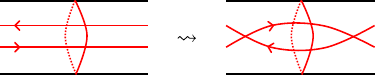}
    \caption{A reordering move for curves intersecting a closed curve in $\cB$.
    The orientation of the closed curve is arbitrary.}
    \label{fig:reorder}
\end{figure}

\begin{figure}
    \centering
    \includegraphics[width=0.2\linewidth]{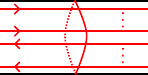}
    \caption{The local configuration of $\cB_1$ around each closed curve.}
    \label{fig:ordering}
\end{figure}

\begin{figure}
    \centering
    \includegraphics[width=0.5\linewidth]{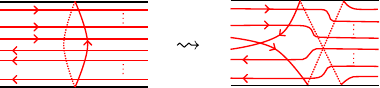}
    \caption{A move eliminating closed curves in the braid.}
    \label{fig:replace1}
\end{figure}

\begin{figure}
    \centering
    \includegraphics[width=0.5\linewidth]{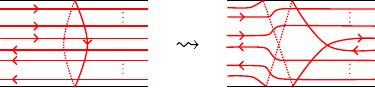}
    \caption{Another move eliminating closed curves in the braid.}
    \label{fig:replace2}
\end{figure}

The following shows that any non-elliptic web in $\bA$ is in fact a non-elliptic braided web.
This guarantees that the twist numbers defined in Definition~\ref{def-twist-number} can be used to define twist numbers
for non-elliptic web diagrams on $\fS_g$, associated to each annulus in a pants decomposition of $\fS_g$.
Moreover, this allows us to apply Theorem~\ref{prop-twist} to non-elliptic web diagrams on $\fS_g$,
annulus by annulus, with respect to a pants decomposition of $\fS_g$.

\begin{proposition}\label{prop-bigon}
Let $\fS$ be a marked surface,
let $W$ be a crossingless web diagram in $\Sigma$, and let $\mathbb{A}$ be an embedded annulus in $\Sigma$ with boundary components $C_1$ and $C_2$.
Suppose that $W$ is in minimal intersection position with respect to $C_1$ and $C_2$.
Then:
\begin{enumerate}[label={\rm (\alph*)}, itemsep=0.3em]

 \item If  $W\cap\bA$ contains no trivial loops, 2-gons, or 4-gons in \eqref{elliptic-face}, the web $W \cap \mathbb{A}$ is a non-elliptic braided web.

\item If $C_1$ and $C_2$ are oriented in the same direction, then
\begin{align}\label{eq-inter-numm}
     i_1(C_1, W) = i_1(C_2, W)
        \quad \text{and} \quad
        i_2(C_1, W) = i_2(C_2, W)\qquad
        (\text{see \eqref{def-i+-}}).
\end{align}


\end{enumerate}
\end{proposition}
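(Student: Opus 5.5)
We first prove (a), which is the substantive part. Write $W_\bA := W\cap\bA$; it is a crossingless web in $\bA$ whose endpoints lie on $C_1\sqcup C_2$ and which has no trivial loops, $2$-gons or $4$-gons. We compute Euler characteristic on faces, in the style of Kuperberg and \cite{frohman20223}. Cut $\bA$ along $W_\bA$ and assign to each complementary region $F$ the curvature $\kappa(F)=\chi(F)-\tfrac{1}{6}\#\{\text{interior trivalent corners}\}-\tfrac14\#\{\text{boundary corners}\}$ (a corner where $W_\bA$ meets $C_1\cup C_2$ counts with weight $\tfrac14$). These curvatures sum to $\chi(\bA)=0$.

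The hypotheses control the signs. Internal disk faces have at least six sides, so $\kappa\le 0$. Disk faces touching the boundary have nonpositive curvature, with one exception each. A bigon cut out by an arc and a boundary component is excluded by the minimal intersection hypothesis on $C_1,C_2$. A triangle formed by a $Y$ attached to the boundary is also excluded: isotoping the trivalent vertex across $C_i$ would reduce intersection. The surviving boundary faces, such as rectangles with two boundary corners and two trivalent corners, have $\kappa\le0$. Since the total curvature is $0$, every face has $\kappa=0$. The zero-curvature faces are exactly interior hexagons, boundary rectangles (two boundary corners, two trivalent vertices, or two arcs running across), and annular faces with no corners.

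Next we reconstruct a braid from this face structure. Consider the dual picture: the boundary rectangles and hexagons tile $\bA$ like a ladder/honeycomb. Each trivalent vertex lies on a "rung" shared by two hexagons or rectangles. Contracting each H-shaped pair of adjacent vertices, i.e.\ reversing the H-resolution of Definition~\ref{def-resolution}, turns the rungs into transverse crossings. The edges then concatenate into curves which, read from face to face, are monotone in the height coordinate, so they are increasing or decreasing curves. Annular zero-curvature faces produce closed curves parallel to the core. Curves of the same direction stay disjoint, because the pairing pattern at each rung is between oppositely oriented strands (sink/source condition). This yields a braid $\cB$ with $H(\cB)=W_\bA$, so $W_\bA$ is a braided web. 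It is non-elliptic by Lemma~\ref{lem-minimal-non}(a), since it contains no $4$-gon. This step, showing that a flat tiling by hexagons and boundary rectangles globally decomposes into monotone strands, is the main obstacle. I would handle it by induction on the number of trivalent vertices: find a rectangle face adjacent to $C_1$ containing an H-configuration, undo it, and check that the hypotheses are preserved.

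For (b), orient $C_1,C_2$ in the same direction and apply (a) together with Lemma~\ref{lem-A}. Then $W_\bA$ is either $H(\cB)$ for a strict minimal braid or a line-circle web. A line-circle web does not meet the boundary, or meets it only through parallel strands, so its sign contributions are visibly equal on the two sides. In the braided case, each increasing curve meets $C_1$ and $C_2$ once each. Both crossings have the same sign relative to the co-oriented curves, because the strand crosses both in the same transverse direction; the same holds for each decreasing curve with the opposite sign. The H-resolution does not change the endpoints or their orientations. Hence $i_1(C_1,W)$ and $i_1(C_2,W)$ both equal the number of curves of one type, and $i_2(C_1,W)=i_2(C_2,W)$ both equal the number of the other type. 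This is well defined because any two minimal positions differ by isotopies that do not change these counts, which again follows from (a) applied to a thin annulus between two minimal-position copies of $C_1$.
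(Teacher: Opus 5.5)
\textbf{Part (a) has a genuine gap at the curvature count.} With your weights, a disk face bounded by one segment of $C_i$ and two trivalent corners (the H-shaped face at the boundary) has $\kappa=1-\tfrac{2}{6}-\tfrac{2}{4}=\tfrac16>0$, not $0$. The flat faces touching the boundary are those with one boundary segment and three trivalent corners, or two boundary segments and no trivalent corner.

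Your hypotheses do not rule out these positive faces. Minimality does not exclude them: pushing the rung across $C_i$ only trades the two lower legs for the two upper ones. Non-ellipticity in this paper does not exclude them either; the paper remarks that only the reduced skein theory of \cite{frohman20223} forbids this face. They also occur in the simplest braided webs. H-resolving one crossing between an increasing and a decreasing curve in $\mathbb A$ produces two faces of curvature $+\tfrac16$, balanced by one face of curvature $-\tfrac13$. Hence $\sum_F\kappa(F)=0$ does not force every face to be flat. The global ``tiling by hexagons and boundary rectangles'' from which you read off the braid therefore does not exist in general.

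The reconstruction step is unjustified even on its own terms. You do not say how to decide which edges are rungs. Your reason that same-direction strands are disjoint is also wrong. An H-resolution is orientation-compatible at \emph{every} transverse crossing of two oriented strands, two increasing strands included, so the sink/source condition excludes nothing.

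A curvature proof could only work as an induction. Under minimality and non-ellipticity, the strictly positive faces are exactly these H-boundary faces, so they exist as soon as some face is strictly negative. You would then turn such an H back into a crossing and slide it out through $C_i$. You would also have to verify that minimality, non-ellipticity and disjointness of same-type strands are preserved. None of this is carried out.

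The paper avoids this work entirely. It regards $W\cap\mathbb A$ as an unbounded lamination on $\Sigma_{0,2}$. It then uses \cite[Proposition~6.4 and Lemma~6.5]{ishibashi2025unbounded} to cut the spiralling diagram along an ideal arc into an unbounded essential (ladder) web in $\mathbb D_2$. Truncating one period gives a braid $\overline{\mathcal B}$ in $\mathbb A$ whose H-resolution differs from $W\cap\mathbb A$ only by twisting endpoints along the boundary. Disjointness of same-direction strands is inherited from the ladder structure.

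\textbf{Part (b)} matches the paper's deduction from (a), with one missing step. In (b), $W$ is only crossingless, so $W\cap\mathbb A$ may contain loops, $2$-gons or $4$-gons, and (a) does not apply directly. You must first remove them (Figure~\ref{fig:H-annulus}). This leaves the endpoints on $C_1,C_2$ and their orientations unchanged, and only then can (a) be applied.
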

\begin{proof}

(a) Set $W' := W \cap \mathbb A$.
We regard $\mathbb A$ as the twice-punctured sphere $\fS_{0,2}$ by viewing the two boundary components of $\bA$ as punctures.
Then $W'$ is an unbounded lamination in $\fS_{0,2}$, since $W$ is in minimal intersection position with respect to $\partial \mathbb A$.
Let $\mathcal W$ be the spiralling diagram of $W'$ (Definition~\ref{def-spiralling}).
By \cite[Proposition~6.4 and Lemma~6.5]{ishibashi2025unbounded}, there exists an ideal arc $E$ in $\fS_{0,2}$ such that, after cutting $\mathcal W$ along $E$, we obtain an unbounded essential web $\mathcal W'$ in the bigon $\mathbb D_2$ (Definition~\ref{def-essential-bigon}).

Let $K$ be a compact strip from Definition~\ref{def-bigon}.
We label the oriented strands in $S_L$ (resp. $S_R$) by $l_i$ (resp. $r_i$), $i \in \mathbb Z$, so that
$K \cap S_L = \{l_1,\ldots,l_k\}$ and $K \cap S_R = \{r_1,\ldots,r_k\}$.
There exists an identification
$F \colon E_L \to E_R$ such that $F(l_i) = r_{i+m}$ for some integer $m$, and
\[
\fS_{0,2} = \mathbb D_2 / (F(x)=x,\ x \in E_L), \qquad
\mathcal W = \mathcal W' / (l_i = r_{i+m},\ i \in \mathbb Z).
\]

We assume $m \ge 0$, since a similar argument applies when $m<0$.
Let $E_L'$ (resp. $E_R'$) be a closed subinterval of $E_L$ (resp. $E_R$) such that
\[
E_L' \cap S_L = \{l_{-m+1},\ldots,l_0,l_1,\ldots,l_k\}, \qquad
E_R' \cap S_R = \{r_1,\ldots,r_k,r_{k+1},\ldots,r_{k+m}\}.
\]
We may assume that $F(E_L') = E_R'$, since otherwise $F$ is isotopic to an identification with this property.
Let $\mathcal W''$ be the subweb of $\mathcal W'$ consisting of components whose endpoints lie in $E_L' \sqcup E_R'$.
Let $\mathcal B$ be the braid representative of $\mathcal W''$.

Define
\[
\bA' = \mathbb D_2 / (F(x)=x,\ x \in E_L'), \qquad
\overline W = \mathcal W'' / (l_i = r_{i+m},\ -m+1 \le i \le k),
\]
and
\[
\overline{\mathcal B}
= \mathcal B / (l_i = r_{i+m},\ -m+1 \le i \le k).
\]
Then $\mathbb A'$ is obtained from $\bA$ by removing one point from each boundary component.
Viewed as a web diagram in $\bA$ via the embedding $\mathbb A' \subset \bA$, the web $\overline W$ differs from $W'$ by a sequence of twists of the endpoints of $W$ along $A_0$ or $A_1$ (see \eqref{A01}).
Moreover, $\overline{\mathcal B} \subset \bA' \subset \bA$ is a braid in $\bA$ 
because the strands in $\mathcal B$ with the same direction do not intersect each other.
We also have
$H(\overline{\mathcal B}) = \overline W$.
Hence $\overline W$ is a braided web, and therefore so is $W'$.
Lemma~\ref{lem-minimal-non}(a) shows that $W'$ is a non-elliptic braided web.


(b) follows from (a), because removing trivial loops, $2$-gons, or $4$-gons (Figure~\ref{fig:H-annulus}) in $W \cap \bA$ does not affect the intersection numbers in \eqref{eq-inter-numm}.

\end{proof}

\begin{figure}
    \centering
    \includegraphics[width=0.5\linewidth]{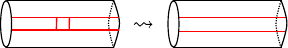}
    \caption{The procedure to remove 4-gons in the annulus.}
    \label{fig:H-annulus}
\end{figure}

Using the technique of \cite[Proposition~13]{frohman20223}, we obtain the following corollary from
Lemma~\ref{lem-flip} and Proposition~\ref{prop-bigon}.

\begin{figure}
    \centering
    \includegraphics[width=0.5\linewidth]{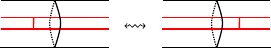}
    \caption{The crossbar move.}
    \label{fig:cb}
\end{figure}

\begin{figure}
    \centering
    \includegraphics[width=0.5\linewidth]{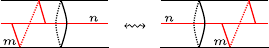}
    \caption{The annulus-H move.}
    \label{fig:H-moves}
\end{figure}

\begin{corollary}\label{Cor-minimal-unique}
Let $\fS$ be a marked surface.
The minimal position of any non-elliptic web diagram with respect to any disjoint collection of closed curves
$C_1,\ldots,C_n$ in $\Sigma$ is unique up to crossbar moves (Figure~\ref{fig:cb}), annulus-H moves (Figure~\ref{fig:H-moves}), twists along some of these
closed curves, flip moves (Figure~\ref{fig:flip}), and planar isotopy within the complement of these curves.
\end{corollary}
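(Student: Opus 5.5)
Let $W$ be a non-elliptic web diagram, and let $W_a$ and $W_b$ be two minimal-position representatives of $W$ with respect to $C_1,\ldots,C_n$. The plan is to follow the strategy of \cite[Proposition~13]{frohman20223}. By Lemma~\ref{lem-flip}, $W_a$ and $W_b$ are related by planar isotopy and flip moves in $\fS$. The task is then to turn a generic such isotopy into one whose intermediate diagrams stay in minimal position, and to classify what happens at the moments the isotopy passes through a curve $C_j$.

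First, replace each $C_j$ by a thin closed annulus $N(C_j)$ whose boundary curves are parallel to $C_j$. We may assume both $W_a$ and $W_b$ are in minimal position with respect to all $\partial N(C_j)$. Proposition~\ref{prop-bigon}(a) then shows that $W_a\cap N(C_j)$ and $W_b\cap N(C_j)$ are non-elliptic braided webs. By Lemma~\ref{lem-A} each is either a strict minimal braided web or a line-circle web. By Proposition~\ref{prop-bigon}(b), the numbers $i_1,i_2$ agree on both boundary circles and for both representatives.

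Second, consider the isotopy in the complement. Take a generic isotopy $H_s$ from $W_a$ to $W_b$ and look at the intersection pattern with $\bigsqcup_j\partial N(C_j)$. Whenever the number of intersection points with some $\partial N(C_j)$ exceeds the minimum, an innermost bigon appears between an edge-path of $W$ and $\partial N(C_j)$. Non-ellipticity and the classification of webs in a bigon (an unbounded essential web in $\bD_2$) show that such a bigon region contains only an H-ladder, so it can be collapsed. This is the argument in \cite{frohman20223}, adapted from arcs to closed curves. Collapsing these bigons lets us replace $H_s$ by a sequence of moves, each of which either
\begin{itemize}
\item is supported in the complement of $\bigcup N(C_j)$, and so is a planar isotopy plus flips there, or
\item pushes a trivalent vertex or an H-configuration across $C_j$.
\end{itemize}
Moving a vertex across a curve while preserving minimality is exactly a crossbar move. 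Moving an H-pair across the whole annulus is an annulus-H move.

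Third, inside each annulus $N(C_j)$, compare the two braided webs. By Lemma~\ref{lem-A}(b) the strict minimal braid is unique, so two representatives with the same boundary data differ only by where the endpoints sit on $\partial N(C_j)$, that is, by twists along $C_j$, together with the rearrangements just described. For line-circle webs, Lemma~\ref{lem-isotopy} gives the needed planar isotopies directly.

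The main obstacle is the second step: showing that the collapsing of bigons can always be done without creating elliptic faces, and that every remaining elementary change across $C_j$ is one of the listed moves. I would handle it by induction on the total excess intersection number along the isotopy. At each maximum, I would locate an innermost bigon and use the non-elliptic classification of webs in $\bD_2$ and $\bD_3$ from \cite{ishibashi2025unbounded} to identify the local picture with a crossbar or annulus-H configuration.
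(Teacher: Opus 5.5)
\textbf{There is a genuine gap: your Step~2 is the decisive step, it is not established, and the mechanism you sketch for it does not do what you need.}

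In a non-minimal intermediate diagram, the regions next to $C_j$ that let you lower $|W\cap C_j|$ are caps and Y-faces. An H-type configuration only allows a crossbar move, which leaves the intersection number unchanged. So ``the bigon contains only an H-ladder, so it can be collapsed'' cannot drive an induction on excess intersection. Even for caps and Y-faces, you would still have to show that inserting the reduction into $H_s$ actually lowers your complexity; the reduction must be commuted past the rest of the isotopy and past the flips, and you do not address this.

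The $\mathbb{D}_2$ classification of \cite{ishibashi2025unbounded} is also the wrong tool here. It describes the web in a region bounded by two arcs transverse to the web, with endpoints on both sides. It does not describe a region one of whose sides is an edge-path of $W$.

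Local bigon collapses also never produce twists or annulus-H moves. Those arise when the web sweeps across a whole annulus parallel to $C_j$, and your Step~2 simply asserts this.

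Step~3 has a related problem. Lemma~\ref{lem-A}(b) says that one strict minimal braided web has a unique strict minimal braid representative. It says nothing about how two different annular webs $W_a\cap N(C_j)$ and $W_b\cap N(C_j)$ are related, and their signatures and twist data can differ. Relating them is exactly what the corollary asserts.

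The missing idea is to fix one diagram and move the curve instead of the web. This is how the paper handles one curve.
\begin{enumerate}
\item Transport $C$ by the isotopy of Lemma~\ref{lem-flip} to get a curve $C'$ isotopic to $C$. Then $C$ and $C'$ are both in minimal position with respect to the same diagram.
\item Arrange that $C$ and $C'$ cobound an annulus. The paper takes this reduction from the technique of \cite[Proposition~13]{frohman20223}, and it is where a bigon classification genuinely applies, since the web crosses both sides of a bigon between $C$ and $C'$.
\item By Proposition~\ref{prop-bigon}(a), the web in that annulus is a non-elliptic braided web. By Lemma~\ref{lem-A}(a), it is either a strict minimal braided web or a line-circle web.
\item Sliding $C'$ across this annulus is then a composition of twists along $C$, annulus-H moves and crossbar moves.
\end{enumerate}

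The same picture is what makes Proposition~\ref{prop-bigon}(b) give your claim that $i_1,i_2$ agree for $W_a$ and $W_b$. That proposition compares two parallel curves against one diagram, not one curve against two diagrams.

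The paper then handles several curves by induction on their number. It applies the one-curve case to $C_{n+1}$ and the inductive hypothesis in $\Sigma\setminus C_{n+1}$, rather than normalizing a single isotopy for all curves at once.
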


\begin{proof}
\textbf{Step 1: The case $n=1$.}
This follows immediately from Lemma~\ref{lem-flip}, Proposition~\ref{prop-bigon}(a), and Lemma~\ref{lem-A}(a).

\textbf{Step 2: The general case.}
We argue by induction on $n$.
Let $C_1,\ldots,C_{n+1}$ be $n+1$ disjoint closed curves in $\Sigma$, and let $W$ and $W'$ be two isotopic
non-elliptic web diagrams that are both in minimal position with respect to $C_1,\ldots,C_{n+1}$.
By Step~1, the webs $W$ and $W'$ can be made isotopic in $\Sigma \setminus C_{n+1}$ using crossbar moves,
annulus-H moves, twists along $C_{n+1}$, and flip moves.
The desired conclusion then follows from the inductive hypothesis.
\end{proof}

Definition~\ref{def-general-position} and Corollary~\ref{Cor-minimal-unique} together imply the following result, which will be used in
\S\ref{Sec-coord} to define coordinates for non-elliptic web diagrams on a closed surface.

\begin{corollary}\label{cor-position-unique}
Let $\mathcal P$ be a pants decomposition of $\Sigma_g$ with $g\geq 2$.
A non-elliptic web diagram $W$ in general position with respect to $\mathcal P$ is unique up to crossbar path moves (Figure~\ref{fig:crross-bar-pass}) and flip moves (Figure~\ref{fig:flip}).
\end{corollary}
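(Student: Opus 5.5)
Given two non-elliptic web diagrams $W$ and $W'$ that are isotopic and both in general position with respect to $\mathcal P$, the plan is to start from Corollary~\ref{Cor-minimal-unique}, applied to the disjoint family consisting of all boundary curves $\partial N(C_i)$, $1\le i\le 3g-3$. Condition (a) of Definition~\ref{def-general-position} puts $W$ and $W'$ in minimal position with respect to this family. The corollary then says they differ by four kinds of operations: crossbar moves, annulus-H moves, twists along the curves $\partial N(C_i)$, and flip moves together with planar isotopy in the complement. The task is to show that the extra rigidity in conditions (b) and (c) kills everything except crossbar path moves and flips.

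First I will handle the twists. Condition (b) forbids intersection points of $W$ with $\partial N(C_i)$ from lying on $\Gamma$. Once we pass to $P'=P\setminus(\partial P\cap\Gamma)$ and to $\mathcal N_i$, any twist along a boundary curve changes the isotopy class rel the marked points. So a twist must be compensated elsewhere. Condition (C1) of Definition~\ref{def-essential-pants} fixes the twisting inside each pair of pants, since $W_1$ must be in minimal position with $\gamma_1,\gamma_2,\gamma_3$. Condition (C2) fixes the collar structure of Figure~\ref{fig:eneral-position} near each boundary. I will argue as follows. A nontrivial twist along $\partial N(C_i)$ relating two essential pieces would either move $W\cap P'$ to a non-minimal position with respect to some $\gamma_k$, contradicting (C1), or it would be absorbed entirely into the annulus $\mathcal N_i$. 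In the second case it is just a re-presentation of the same web in $\mathcal N_i$. By Lemma~\ref{lem-A}(b), the strict minimal braid representative there is unique, and the line-circle case is rigid by Lemma~\ref{lem-isotopy}. So a twist only transfers twisting between a pants piece and an annulus piece, and general position pins down where that twisting lives.

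Next I will deal with the crossbar and annulus-H moves. An annulus-H move slides an H across a boundary curve $\partial N(C_i)$, changing which side of the curve the H-bar lies on. Condition (C2) prescribes that in the collar $P'\setminus\widetilde P$ the web looks exactly like Figure~\ref{fig:eneral-position}. This leaves only the freedom of passing crossbars along paths, which I expect is exactly what the crossbar path move of Figure~\ref{fig:crross-bar-pass} records. I would compose the successive crossbar and annulus-H moves supplied by Corollary~\ref{Cor-minimal-unique}. I would then check that whenever an intermediate diagram violates (C2), the composition can be regrouped into a crossbar path move between two diagrams that both satisfy (C2).

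The main obstacle is this regrouping step. The moves given by Corollary~\ref{Cor-minimal-unique} pass through intermediate diagrams that need not satisfy (C1) or (C2), so one has to control the whole sequence, not just its endpoints. My first attempt will work annulus by annulus. In each $\mathcal N_i$, Lemma~\ref{lem-A} and the uniqueness of the braid representative reduce the comparison of $W\cap\mathcal N_i$ and $W'\cap\mathcal N_i$ to the positions of the H-bars relative to the two boundary curves, which is exactly what a crossbar path move changes. For each pair of pants, I will use the injectivity of the shear coordinates (Lemma~\ref{lem-domain}), treating $W\cap P$ as an unbounded lamination. This identifies $W\cap P'$ and $W'\cap P'$ up to flips and the boundary crossbar adjustments. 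Gluing these local identifications gives the claim.
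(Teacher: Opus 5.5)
Your first paragraph is the paper's entire argument. The paper gives no proof beyond the remark that the corollary follows from Definition~\ref{def-general-position} and Corollary~\ref{Cor-minimal-unique}, applied to the curves $\partial N(C_i)$ (legitimate by condition (a)). The implicit idea is that (b) and (c) remove the twist and annulus-H freedom and leave crossbar moves, which assemble into crossbar path moves. You go further and correctly note that the move sequence from Corollary~\ref{Cor-minimal-unique} passes through diagrams violating (b) and (c). The trouble is in how you propose to handle this step, which you yourself call the main obstacle.

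\textbf{The pants step.} Lemma~\ref{lem-domain} is an injectivity statement, so it only helps once you know $\mathbf{x}(W\cap P)=\mathbf{x}(W'\cap P)$. Your proposal never supplies this. In the paper the implication runs the other way: Corollary~\ref{cor-position-unique} is what is used to show that $t_P$, $h_P$ (and $t_{i1}$, $t_{i2}$) do not depend on the chosen general position. So appealing to coordinate injectivity here presupposes exactly the invariance this corollary is meant to provide. Suppose instead you check directly that each move of Corollary~\ref{Cor-minimal-unique} induces an equivalence of $W\cap P$ in $\mathcal{L}(\Sigma_{0,3})$. Then Lemma~\ref{lem-domain} adds nothing. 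You would still have to recover what that equivalence forgets: the twisting around the punctures and the equivalence moves at punctures listed in \S\ref{sub-unbounded}. This is precisely the position of $W\cap P'$ relative to the marked points $\partial P\cap\Gamma$. Recovering it requires (b), (C1) and (C2), and your proposal does not do this.

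\textbf{The annulus step.} Lemma~\ref{lem-A}(b) gives uniqueness of the braid representative of a single web. It does not relate $W\cap\mathcal N_i$ to $W'\cap\mathcal N_i$, whose twist numbers could a priori differ by twisting transferred from an adjacent pair of pants. Your dichotomy excluding this (``non-minimal with respect to some $\gamma_k$, or absorbed into the annulus'') is asserted, not derived.

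\textbf{The gluing step.} ``Gluing these local identifications'' requires the crossbar adjustments on the two sides of each $N(C_i)$ to match up. That matching is the content of a crossbar path move, and you do not argue it.

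What is actually needed is a move-by-move argument from (b), (C1) and (C2). Between two diagrams in general position, the twists and annulus-H moves supplied by Corollary~\ref{Cor-minimal-unique} must cancel or be excluded. The crossbar moves across the two boundary curves of each $N(C_i)$ must pair up into crossbar path moves.
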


\begin{figure}[H]
    \centering
    \includegraphics[width=0.5\linewidth]{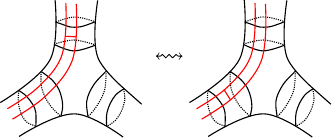}
    \caption{The crossbar path move.}
    \label{fig:crross-bar-pass}
\end{figure}

\subsection{Twist numbers}

In this subsection, we will define the coordinates of non-elliptic braided webs in $\bA'$. 

Let $\cB=\{c_1,\ldots,c_n\}$ be a strict braid in $\bA'$.
We define
\begin{align*}
    n_1(\cB) &:= \text{the number of increasing curves in $\cB$},\\
    n_2(\cB) &:= \text{the number of decreasing curves in $\cB$},\\
    t_1(\cB) &:= \sum_{c_i\ \text{increasing } \text{in }\cB} t(c_i),\\
    t_2(\cB) &:= \sum_{c_i\ \text{decreasing } \text{in }\cB} t(c_i),
\end{align*}
where the twist number $t(c_i)$ is defined as in Figure~\ref{fig:pos-neg}.

\begin{definition}\label{def-twist-number}
Let $W$ be a non-elliptic braided web in $\mathbb{A}'$. 
Lemma~\ref{lem-A}(a) implies that $W$ is either a strict minimal braided web or a line-circle web.
We define 
$${\bf t}(W) = (n_1(W), n_2(W), t_1(W), t_2(W))$$
as follows:
\begin{itemize}
    \item If $W$ is a line-circle web as illustrated in the left picture in Figure~\ref{fig:line-circle-web}, define
    \[
        {\bf t}(W):=(n,0,t,m).
    \]

    \item If $W$ is a line-circle web as illustrated in the right picture in Figure~\ref{fig:line-circle-web}, define
    \[
        {\bf t}(W):=(0,n,m,t).
    \]

    \item If $W$ is a strict minimal braided web represented by a strict minimal braid $\cB$, define
    \[
        {\bf t}(W):=\bigl(n_1(\cB),\,n_2(\cB),\,t_1(\cB),\,t_2(\cB)\bigr).
    \]
\end{itemize}
\end{definition}

Lemma~\ref{lem-A}(b) implies the following result.

\begin{lemma}
Let $W$ be a non-elliptic braided web in $\mathbb{A}'$.
Then ${\bf t}(W)$ is well defined.
\end{lemma}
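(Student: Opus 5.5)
The plan is to run through the three cases of Definition~\ref{def-twist-number}. For each case I will check that the recipe for ${\bf t}(W)$ uses only data determined by $W$ itself, not by any auxiliary choice. By Lemma~\ref{lem-A}(a), $W$ is either a strict minimal braided web or a line-circle web. The first step is to show these two alternatives are mutually exclusive, and that the left and right line-circle configurations of Figure~\ref{fig:line-circle-web} cannot coincide, so that exactly one clause of the definition applies.

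The mutual exclusivity comes from the closed curve. A line-circle web with $m\ge 1$ contains a component that is either an oriented closed curve parallel to the core of $\bA'$ or, after H-resolution, a cycle of the web winding around the core. A strict braided web has no such cycle: every cycle in the H-resolution of a strict braid would have to come from a closed loop in the braid. Likewise, the left and right configurations are distinguished by the orientation of the circle relative to the boundary orientation. They are also distinguished by whether the line strands are increasing or decreasing, which is visible from the endpoints of $W$ on $\partial\bA'$. Since web diagrams are taken up to isotopy, I will use Lemma~\ref{lem-flip}: isotopic crossingless diagrams differ by planar isotopy and flips. These moves preserve the numbers of endpoints on $A_0$ and $A_1$ together with their orientations, and they preserve the presence of an essential cycle. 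So the case distinction is an isotopy invariant.

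In the strict minimal braided case, Lemma~\ref{lem-A}(b) gives a unique strict minimal braid $\cB$ with $W=H(\cB)$. The quantities $n_i(\cB)$ and $t_i(\cB)$ are defined from $\cB$ alone: the twist numbers are read off in $\bA'$, where the removed boundary points fix the framing of twists. Hence ${\bf t}(W)$ is well defined.

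In the line-circle case, I must check that the parameters $(n,t,m)$ are recovered from $W$. Here $n$ is the number of endpoints on a boundary component. The number $m$ is the number of parallel circles, or equivalently the number of essential cycles, which is invariant under planar isotopy and flips. The twist $t$ is the total twist of the $n$ strands relative to the marked points. The isotopies of Lemma~\ref{lem-isotopy} move twists across the circles without changing their total, so $t$ is intrinsic. I expect this step, that $t$ is invariant under flip moves and under the isotopies of Lemma~\ref{lem-isotopy}, to be the main (though mild) obstacle. I would handle it by computing $t$ from the endpoint matching pattern in $\bA'$, i.e.\ from how far the arcs wind relative to the removed points, which is manifestly isotopy invariant.
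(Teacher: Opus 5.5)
Your strict-minimal case is exactly the paper's proof, which is nothing more than the appeal to Lemma~\ref{lem-A}(b). The paper takes for granted both that the cases are mutually exclusive and that the line-circle parameters are determined by $W$.

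\textbf{Exclusivity of the cases.} The argument you add here rests on a false claim: H-resolutions of strict minimal braids \emph{can} contain cycles winding around the core. Take an increasing curve $c$ and a decreasing curve $d$ in minimal position in $\mathbb{A}'$ meeting in exactly two points $p_1,p_2$; this happens once their relative twisting is large enough. Then $c[p_1,p_2]\cup d[p_1,p_2]$ is a simple closed curve. By minimality it bounds no bigon, so it is parallel to the core. In $H(\{c,d\})$ the two segments together with the two H-rungs form a $4$-cycle around the core. So ``presence of an essential cycle'' does not separate strict minimal braided webs from line-circle webs.

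What does separate them is the endpoint data. Whether the edge at an endpoint points into or out of $\mathbb{A}'$ is intrinsic to $W$, and in a line-circle web all endpoints have the same type. A strict braid with such endpoints consists of curves of one type only. These are pairwise disjoint by Definition~\ref{def-braid}(b), so the H-resolution has no trivalent vertices and no closed components. By contrast, a line-circle web with $m\ge1$ has closed components if $n=0$, and $2nm>0$ trivalent vertices if $n\ge1$. The same counts recover $m$: the number of closed components when $n=0$, and the number of trivalent vertices divided by $2n$ when $n\ge1$. This is better than counting ``essential cycles'': for $n\ge1$ the web is connected with cycle rank $nm-n+1$, which exceeds $m$ when $n,m\ge2$.

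\textbf{The parameter $t$.} Your plan here also fails as described. For $n,m\ge1$ every strand is H-resolved against every circle, so $W$ has no arcs running from $A_0$ to $A_1$. There is therefore no endpoint matching, or winding of arcs, from which to read off $t$. Checking that the isotopies of Lemma~\ref{lem-isotopy} preserve $t$ does not suffice either. An arbitrary isotopy (planar isotopy plus flips, by Lemma~\ref{lem-flip}) need not be of that form. What is needed is an argument that two line-circle braid representatives of the same web carry the same total twist. For instance, compare $H(\mathcal{B})$ and $H(\mathcal{B}')$ via Lemma~\ref{lem-flip}, in the way the proof of Lemma~\ref{lem-A}(b) handles strict braids.
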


Let $C=S^1\times \{\frac{1}{2}\}$. We orient $C$ as illustrated in Figure~\ref{fig:curve-C}.
It is straightforward to check that 
$$n_1(W) = i_1(C,W),\qquad
 n_2(W) = i_2(C,W).$$

\begin{figure}
    \centering
    \includegraphics[width=0.2\linewidth]{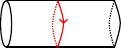}
    \caption{The oriented red closed curve is $C$ (the positive orientation of $[0,1]$ is from left to right).}
    \label{fig:curve-C}
\end{figure}

We orient $A_0,A_1$ (see \eqref{A01}) in the same direction as $C$.

Let \(W\) be a braided web in \(\mathbb{A}'\), and suppose that
\[
W \cap \partial A_0 = \{a_1,\ldots,a_n\}, \qquad
W \cap \partial A_1 = \{a_{n+1},\ldots,a_{2n}\}.
\]
Assume that the points \(a_1,\ldots,a_n\) (resp. \(a_{n+1},\ldots,a_{2n}\)) are encountered consecutively when traversing \(A_0\) (resp. \(A_1\)) in its orientation, starting from the unique puncture in \(A_0\) (resp. \(A_1\)).

For each \(1 \le i \le 2n\), define a map
\[
s \colon \{a_1,\ldots,a_{2n}\} \longrightarrow \{-,+\}
\]
by
\[
s(a_i)=
\begin{cases}
+ & \text{if the strand incident to \(a_i\) is increasing},\\
- & \text{if the strand incident to \(a_i\) is decreasing}.
\end{cases}
\]
We call the tuple 
\begin{align}\label{def-signature}
   {\bf s}(W)= \bigl(s(a_1),\ldots,s(a_{2n})\bigr)
\end{align}
the \emph{signature} of \(W\).

\def\bt{{\bf t}}

The following theorem is the main result of this section. It shows that a non-elliptic braided web in $\mathbb{A}'$ is uniquely determined by its coordinates, as defined in Definition~\ref{def-twist-number}, together with its signature.

\begin{theorem}\label{prop-twist}
Let $W$ be a non-elliptic braided web in $\mathbb{A}'$.
Then $W$ is uniquely determined by its coordinate
\[
\bt(W) = (n_1,n_2,t_1,t_2)
\]
together with its signature.
Moreover, we have
\begin{align*}
    \im\bt = \{(n_1,n_2,t_1,t_2)\in\BN^2\times \BZ^2\mid t_i\geq 0 \text{ if }n_i=0, \text{ for }i=1,2\}.
\end{align*}
\end{theorem}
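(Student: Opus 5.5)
By Lemma~\ref{lem-A}(a), $W$ is either a strict minimal braided web or a line-circle web, and I would treat the two cases separately before combining them.

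\emph{Strict minimal braided webs.} By Lemma~\ref{lem-A}(b), $W=H(\cB)$ for a unique strict minimal braid $\cB$, so it suffices to recover $\cB$ up to planar isotopy in $\bA'$.
\begin{itemize}
\item The signature ${\bf s}(W)$ records the cyclic order, starting from the marked points, of the increasing and decreasing endpoints on $A_0$ and on $A_1$.
\item Since increasing curves are pairwise disjoint, their endpoints are matched order-preservingly. The only remaining freedom for the increasing family is a global cyclic shift of this matching.
\item This shift, measured relative to the marked points $p',p''$, is exactly encoded by $t_1=\sum t(c_i)$, because moving every endpoint of the family by one step changes the total twist by a fixed amount. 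The same holds for the decreasing family and $t_2$.
\item Once both families are fixed as isotopy classes rel boundary in $\bA'$, minimality (no bigons) determines the crossings between an increasing and a decreasing curve. Hence $\cB$ is determined.
\end{itemize}
The delicate point is to check carefully, using the convention of Figure~\ref{fig:pos-neg}, that the sum of twist numbers determines the matching shift. In other words, individual twists $t(c_i)$ within a parallel family differ by at most one, in a pattern forced by the signature. I would prove this by cutting $\bA'$ along an arc joining $p'$ to $p''$ and reading off each curve's winding.

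\emph{Line-circle webs.} Here the signature is constant: all $+$ or all $-$. The data $(n,0,t,m)$ or $(0,n,m,t)$ recovers $n$, $m$ and $t$ directly from Figure~\ref{fig:line-circle-web}.

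\emph{Separating the two types.} A line-circle web has some $n_i=0$ with $t_i=m>0$. A strict braided web with $n_i=0$ has $t_i=0$, since it is an empty sum. The two types are therefore distinguished by the coordinates, which gives injectivity.

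\emph{Image.} For the inclusion $\subseteq$:
\begin{itemize}
\item For strict braids, $n_i=0$ forces $t_i=0$.
\item For line-circle webs, $n_i=0$ gives $t_i=m\ge 1$.
\end{itemize}
For the inclusion $\supseteq$, take a tuple $(n_1,n_2,t_1,t_2)$ in the right-hand set.
\begin{itemize}
\item If $n_1,n_2>0$, realize it by $n_1$ increasing and $n_2$ decreasing curves. Choose individual twists summing to $t_1$ and $t_2$, put them in minimal position, and take $H$.
\item I must show the result is non-elliptic. By Lemma~\ref{lem-minimal-non}(a) it suffices to rule out $4$-gons. A $4$-gon would come from two consecutive crossings bounding a bigon-free region between one increasing and one decreasing strand with opposite orientation pattern. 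I expect to exclude this because crossings between monotone curves of opposite type in minimal position have consistent orientation pattern. This is the main obstacle, and I would verify it locally on the H-resolution of a crossing square.
\item If some $n_i=0$ and $t_i=0$, use a strict braid with only the other family.
\item If some $n_i=0$ and $t_i>0$, use the corresponding line-circle web, with $m=t_i$.
\end{itemize}
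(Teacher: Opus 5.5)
Your plan is the paper's proof with more detail filled in. You use the dichotomy of Lemma~\ref{lem-A}(a). For strict webs, the signature together with $(n_i,t_i)$ fixes each monotone family; your shift argument is the right way to see this, and the point is that the twist sum changes by exactly one per step of the matching. Minimality then fixes the crossings. Line-circle webs are separated from strict ones because they have some $n_i=0$ with $t_i=m>0$. There are, however, two places where the argument as proposed does not go through.

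\textbf{The surjectivity case split misses a case.} It fails for $(0,0,t_1,t_2)$ with $t_1,t_2>0$, which lies in the claimed image. Your rule ``$n_1=0$, $t_1>0$: take the corresponding line-circle web with $m=t_1$'' produces the right-hand configuration with $n=n_2=0$. That forces $t=0$, so its coordinate is $(0,0,t_1,0)$; the symmetric rule fails in the same way.

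In fact no web in the dichotomy of Lemma~\ref{lem-A}(a) has such coordinates. Strict webs without strands give $(0,0,0,0)$, and line-circle webs with $n=0$ give $(0,0,0,m)$ or $(0,0,m,0)$. The paper's ``it follows immediately'' has the same hole, and its source is Lemma~\ref{lem-A}(a) itself. Two parallel core loops with opposite orientations form a braid in the sense of Definition~\ref{def-braid}, are minimal, and have no elliptic faces. Yet this web is neither strict nor line-circle, since an $n=0$ line-circle web consists of parallel copies of one oriented curve.

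So no cleverer construction closes this case. The classification and Definition~\ref{def-twist-number} must be extended to webs made of $a$ core loops of one orientation and $b$ of the other, with coordinate $(0,0,a,b)$ compatible with the two $n=0$ line-circle cases. The order of the loops is irrelevant, since parallel loops can be exchanged by isotopy in $\mathbb{A}'\times(-1,1)$. Only after this extension do injectivity and the stated image both hold.

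\textbf{Non-ellipticity of $H(\mathcal{B})$ is left open, and the suggested mechanism is not the right one.} You need this for the strict minimal braid you build when $n_1,n_2>0$. The ``consistent orientation pattern'' is not what makes it work. Here is the argument that does.
\begin{enumerate}
\item At a crossing of an increasing and a decreasing curve, the two quadrants facing $A_0$ and $A_1$ are each bounded by one incoming and one outgoing half-strand. After H-resolution each of these acquires two trivalent vertices, while the both-in and both-out quadrants acquire one.
\item In the universal cover, every curve of a strict braid is a graph over the height coordinate. So every interior face of $\mathcal{B}$ is height-monotone, with a leftmost and a rightmost corner of the mixed type, and all other corners are of the other type.
\item Hence an interior face with $k$ corners becomes a $(k+2)$-gon in $H(\mathcal{B})$. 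It is a $4$-gon only when $k=2$, i.e.\ a bigon, which minimality excludes.
\item Boundary faces end in an H rather than a Y, and $2$-gons and trivial loops cannot occur.
\end{enumerate}
The paper does not supply this argument either.

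\textbf{A smaller point.} In the same construction the individual twists cannot be chosen freely, since arbitrary choices make increasing curves intersect. Instead, choose the shift of the parallel family so that the twist sum equals $t_1$; your own shift computation shows this is always possible.
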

\begin{proof}

Lemma~\ref{lem-A}(a) implies that $W$ is either a strict minimal braided web or a line-circle web.
It follows immediately that
\[
\im \bt
=
\bigl\{
(n_1,n_2,t_1,t_2) \in \BN^2 \times \BZ^2
\;\big|\;
t_i \ge 0 \text{ if } n_i = 0,\ i=1,2
\bigr\}.
\]

Moreover, it is straightforward to see that
\[
{\bf t}(W_1) \neq {\bf t}(W_2)
\]
whenever $W_1$ is a strict minimal braided web and $W_2$ is a line-circle web,
or when $W_1 \neq W_2$ are both line-circle webs.

Suppose that $W$ is a strict minimal braided web, represented by a strict minimal braid $\cB$.
The signature of $W$, together with the data $(n_1,t_1)$ (resp.\ $(n_2,t_2)$), uniquely determines the increasing (resp.\ decreasing) curves in $\cB$.
The minimality of $\cB$ then uniquely determines $\cB$.
Hence $W$ is uniquely determined.
\end{proof}

\section{Webs in a pair of pants}\label{Sec-pants}

\def\sph{\Sigma_{0,3}}
\def\cL{\mathcal{L}}
\def\cll{\mathcal L(\Sigma_{0,3})}
\def\bx{{\bf x}}

Recall that $P'$ is the marked surface obtained from the pair of pants $P$ by removing one point from each boundary component.
In this section, we define coordinates for essential non-elliptic web diagrams (Definition~\ref{def-essential-pants}) in the pair of pants $P'$; see Definition~\ref{def:coordinates-pants}.
The main result of this section is Theorem~\ref{lem:image-P}, which asserts that a non-elliptic web diagram in $P'$, up to the moves shown in Figure~\ref{fig:flip-pants}, is uniquely determined by its coordinates.

Let $W$ be an essential non-elliptic web diagram in $P'$.
Via the natural embedding $P' \hookrightarrow P$, we regard $W$ as a web diagram in $P$.
We identify $P$ with $\Sigma_{0,3}$ as in Figure~\ref{fig:sphere}.
By Definition~\ref{def-essential-pants}, it follows that
$W \in \mathcal{L}(\Sigma_{0,3}).$

We label $I(\Sigma_{0,3})$ as in Figure~\ref{fig:sphere}.
Suppose that
\[
    \bx(W)
    = (x_{11}, x_{12}, x_{21}, x_{22}, x_{31}, x_{32}, x_v, x_{v'})
    \in \mathbb{Z}^{I(\Sigma_{0,3})} .
\]

We introduce the following coordinates associated to $W$.

\def\oB{\overline{B}}
\def\bP{{\bf P}}

\begin{definition}
\label{def:coordinates-pants}
Let $\oB$ denote the set of essential non-elliptic web diagrams in $P'$.
Define
\begin{align*}
    {\bf P}\colon \oB\rightarrow \mathbb{Z}^{I(\Sigma_{0,3})},\qquad
    W\mapsto (n_{11}(W),n_{12}(W),n_{21}(W),n_{22}(W),n_{31}(W), n_{32}(W),t_P(W),h_P(W)),
\end{align*}
where
\begin{equation}
\label{eq:coordinates-pants}
\begin{aligned}
    n_{11}(W) &= x_{11} + x_{32},\\
    n_{12}(W) &= x_{12} + x_{31} + x_v + x_{v'},\\
    n_{21}(W) &= x_{31} + x_{22},\\
    n_{22}(W) &= x_{32} + x_{21} + x_v + x_{v'},\\
    n_{31}(W) &= x_{21} + x_{12},\\
    n_{32}(W) &= x_{22} + x_{11} + x_v + x_{v'},\\
    t_P(W) &= x_v - x_{v'},\\
    h_P(W) &= x_{11} - x_{12} + x_{21} - x_{22} + x_{31} - x_{32}.
\end{aligned}
\end{equation}
\end{definition}

Label the three boundary components of $P'$ by $C_1, C_2, C_3$ as in
Figure~\ref{fig:sphere}.
For $t=1,2,3$, define the intersection numbers
\begin{equation}
\label{eq:intersection-numbers}
\begin{aligned}
    i_{t1}(W) &:= \text{the number of endpoints of $W$ on $C_t$ pointing towards $C_t$},\\
    i_{t2}(W) &:= \text{the number of endpoints of $W$ on $C_t$ pointing away from $C_t$}.
\end{aligned}
\end{equation}

The following provides a geometric interpretation of the coordinates $n_{t1}(W),\, n_{t2}(W)$, for $t=1,2,3$, in \eqref{eq:coordinates-pants}; namely, they coincide with the intersection numbers defined in \eqref{eq:intersection-numbers}.

\begin{proposition}
\label{prop:intersection-coordinates}
Let $W$ be an essential non-elliptic web diagram in $P'$.
Then, for $t=1,2,3$, we have
\[
    i_{t1}(W) = n_{t1}(W),
    \qquad
    i_{t2}(W) = n_{t2}(W).
\]
\end{proposition}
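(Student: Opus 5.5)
Both sides of each identity are additive under disjoint union of laminations, so the plan is to reduce to building blocks and compare counts on each. The shear coordinates $\bx$ are defined locally, quadrilateral by quadrilateral, as a sum of contributions from curve components and from honeycombs (via \eqref{fig:curve-components} and \eqref{fig:honeycombs}). Each $n_{tk}$ is a linear combination of the $x_i$, so it is additive over these local pieces. The intersection counts $i_{tk}$ are additive over the ends of $W$ at the puncture $C_t$. So it suffices to show that every end of $W$ at $C_t$ contributes the same amount to both sides.

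First, I would use Definition~\ref{def-essential-pants} to describe $W$ concretely. Condition (C2) and Figure~\ref{fig:eneral-position} describe $W$ near each boundary: every endpoint on $C_t$ is an end of an unbounded lamination entering the puncture $C_t$ of $\Sigma_{0,3}$. Condition (C1) lets me spiral each end clockwise, as in Definition~\ref{def-spiralling}, and place $\widetilde W$ in good position with respect to the split triangulation $\widehat\lambda$ of Figure~\ref{fig:sphere}. Near the puncture $C_t$, the spiralling end crosses the two edges incident to $C_t$ infinitely often. After the transient region, its braid representative looks like the periodic pattern of corner arcs in the triangles around $C_t$.

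The key step is to compute, for a single end oriented into (resp. out of) $C_t$, its total contribution to the linear combination defining $n_{t1}$ (resp. $n_{t2}$). Concretely, I would take the combination $x_{11}+x_{32}$ for $C_1$ (pointing toward), and $x_{12}+x_{31}+x_v+x_{v'}$ (pointing away), and similarly by the $\mathbb Z/3$ symmetry of Figure~\ref{fig:sphere}.

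I would organise this by the standard invariance of tropical coordinates. For each puncture there are two linear functionals of $\bx$ that read off, respectively, the number of incoming and outgoing ends at that puncture; these are the tropical analogues of the eigenvalue (Casimir) functions at the puncture. Under the $p$-map remark, they correspond to the sums of $\varepsilon_{ij}a_j$ around the puncture. The plan is to check, using \eqref{fig:curve-components} and \eqref{fig:honeycombs}, three contributions to the combinations $n_{tk}$:
\begin{itemize}
\item a closed-off curve component (an arc passing through a quadrilateral without ending at $C_t$) contributes $0$;
\item a honeycomb together with its strands contributes $0$ to the combinations associated to punctures its strands do not end at;
\item each spiralling end at $C_t$ contributes exactly $1$ to the matching combination and $0$ to the opposite one.
\end{itemize}

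The main obstacle is this last case analysis. Spiralling ends produce infinitely many corner arcs, so I must verify that the contributions telescope to a finite answer: only the first turn, where the end leaves the compact part, should contribute. I would first treat the simplest laminations:
\begin{itemize}
\item a single oriented arc from $C_t$ to $C_s$;
\item a loop around $C_t$, for which both sides are $0$;
\item a single tripod with ends at $C_1,C_2,C_3$, which is a honeycomb $\mathcal H_{\pm1}$.
\end{itemize}
I would compute $\bx$ for each directly and check the identities. Then I would argue that an arbitrary $W$ decomposes in each quadrilateral into pieces of exactly these local types, so additivity finishes the proof. As a consistency check, the six inequalities defining $\Lambda$ in \eqref{def-lambda} say exactly that these six counts are nonnegative, as Lemma~\ref{lem-domain} requires.
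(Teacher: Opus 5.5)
\textbf{There is a genuine gap in the reduction step.} Your architecture (additivity, then a check on model pieces) has the same shape as the paper's, but the reduction does not go through as stated.

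\textbf{Wrong notion of additivity.} A general $W$ is connected, so additivity under disjoint union of laminations is not the relevant property. What you need is additivity of $\mathbf{x}$ over the decomposition of the braid representative $W_{\mathrm{br}}$ into elementary braids, i.e.\ connected pieces all of whose honeycombs have height one. The paper takes this from \cite[\S4.1, Lemma~4.1]{ishibashi2024unbounded}.

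\textbf{Incomplete list of building blocks.} Your three models (an arc, a peripheral loop, a single tripod) do not exhaust the elementary pieces. The claim that every $W$ decomposes ``in each quadrilateral into pieces of exactly these local types'' is false. In $Q_E$ a strand can be both a $T_L$- and a $T_R$-strand, joining the honeycombs of the two triangles. This happens, for example, in the theta web in $P'$ whose three faces each contain one boundary component. No such strand occurs in any of your models.

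\textbf{Why the end-by-end accounting fails there.} From \eqref{eq:coordinates-pants} you get $\sum_t n_{t2}-\sum_t n_{t1}=3(x_v+x_{v'})$, and only honeycombs feed the triangle coordinates. For a single tripod, the identity you want forces $x_v+x_{v'}=\pm1$. So a height-one honeycomb contributes $\pm1$ to its triangle coordinate, and hence to all three $n_{t2}$ at once, wherever its legs go. In the theta web there are no ends at all, and the front and back honeycombs' contributions to $x_v$ and $x_{v'}$ cancel only against each other. Your second bullet (a honeycomb with its strands contributes $0$ to the combinations of punctures its strands avoid) therefore fails. Contributions cannot be attributed to individual ends.

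\textbf{How the paper avoids this.} It localizes per puncture rather than per end. For an elementary braid, both $n_{tj}$ and $i_{tj}$ depend only on its restriction to the once-punctured bigon $D^{*}(p)$ around the puncture $p$, which contains both triangles. The possible restrictions are classified in \cite[Figure~10]{ishibashi2024unbounded}, and the proof is an inspection of the seven clockwise cases. To complete your argument you need such a finite classification of local configurations near a puncture, including those with honeycombs in both triangles, and then a check of each case.
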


\begin{proof}
In this proof, we identify the pair of pants $P$ with $\sph$ and regard $W$ as an unbounded $\SL$-lamination on $\sph$.
Let $\widetilde{W}$ denote the associated spiralling diagram of $W$ in good position with respect to $\widehat \lambda$, and let $W_{\mathrm{br}}$ be its braid representative.

As discussed in \cite[\S4.1]{ishibashi2024unbounded}, the braid $W_{\mathrm{br}}$ admits a decomposition
\[
W_{\mathrm{br}} = \bigcup_{\alpha} W_{\alpha},
\]
where each $W_{\alpha}$ is connected and consists only of honeycombs of height one.
We refer to each web $W_{\alpha}$ appearing in this decomposition as an \emph{elementary braid} in $\sph$.

By \cite[Lemma~4.1]{ishibashi2024unbounded}, we have
\[
n_{t1}(W) = \sum_{\alpha} n_{t1}(W_{\alpha}),
\qquad
n_{t2}(W) = \sum_{\alpha} n_{t2}(W_{\alpha}),
\qquad
t=1,2,3.
\]
On the other hand, it is clear that
\[
i_{t1}(W) = \sum_{\alpha} i_{t1}(W_{\alpha}),
\qquad
i_{t2}(W) = \sum_{\alpha} i_{t2}(W_{\alpha}),
\qquad
t=1,2,3.
\]
Therefore, it suffices to prove the claim when $W$ is an elementary braid in $\sph$.

Let $p$ be a puncture of $\sph$, corresponding to the boundary component $C_i$ for some $i=1,2,3$. Let $D^{*}(p)$ (see Figure~\ref{fig:Dp}) be the once punctured bigon obtained by cutting along the other two punctures.
Since both $n_{tj}(W)$ and $i_{tj}(W)$ ($j=1,2$) depend only on the restriction of $\widetilde{W}$ to $D^{*}(p)$, which contains the puncture $p$ together with two punctures on its boundary, it suffices to consider the diagram
\[
\widetilde{W}_p := \widetilde{W} \cap D^{*}(p).
\]

\begin{figure}
    \centering
    \includegraphics[width=0.2\linewidth]{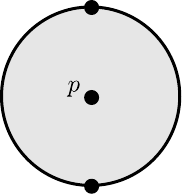}
    \caption{Illustration for the once punctured bigon $D^{*}(p)$.}
    \label{fig:Dp}
\end{figure}

In \cite[Figure~10]{ishibashi2024unbounded}, all possible local configurations of $\widetilde{W}_p$ are listed.
Among the ten pictures in \cite[Figure~10]{ishibashi2024unbounded}, our situation corresponds to the seven cases in which the spiralling diagram around the puncture $p$ is clockwise.
A direct inspection of these seven cases shows that
\[
n_{tj}(W) = i_{tj}(W)
\qquad
\text{for all } t=1,2,3 \text{ and } j=1,2.
\]
\end{proof}

\begin{figure}[H]
    \centering
    \includegraphics[width=0.5\linewidth]{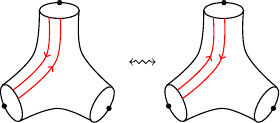}
    \caption{The flip move in the pair of pants.}
    \label{fig:flip-pants}
\end{figure}

We are now ready to state the main theorem in this section. 

\begin{theorem}\label{lem:image-P}
(a)   Let $W$ and $W'$ be two essential non-elliptic web diagrams in $P'$ with
 $\bP(W)=\bP(W')$.
Then $W$ can be obtained from $W'$ by a sequence of moves shown in Figure~\ref{fig:flip-pants}, together with the moves obtained by rotating the pictures in Figure~\ref{fig:flip-pants} by $120^\circ$ and $240^\circ$.

(b)    The image $\im \bP$ consists of all tuples
    \[
    (n_{11}, n_{12}, n_{21}, n_{22}, n_{31}, n_{32}, t_P, h_P)
    \in \mathbb{Z}^{I(\Sigma_{0,3})}
    \]
    satisfying the following conditions:
    \begin{align}\label{eq-domain-P}
        \begin{cases}
            n_{11} \ge 0,\; n_{12} \ge 0,\; n_{21} \ge 0,\; n_{22} \ge 0,\;
            n_{31} \ge 0,\; n_{32} \ge 0, \\[4pt]
            n_{12} + n_{22} + n_{32} \equiv
            n_{11} + n_{21} + n_{31} \pmod{3}, \\[4pt]
            h_P + n_{11} + n_{21} + n_{31} \equiv 0 \pmod{2}, \\[4pt]
            h_P \equiv
            n_{11} - n_{12}-n_{21} + n_{22} \pmod{3}, \\[4pt]
            3t_P \equiv n_{12} + n_{22} + n_{32}
            - n_{11} - n_{21} - n_{31} \pmod{6}.
        \end{cases}
    \end{align}

\end{theorem}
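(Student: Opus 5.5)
The plan is to reduce everything to Lemma~\ref{lem-domain}, i.e.\ to the injectivity of the shear coordinate map $\bx$ on $\cll$ together with its image $\Lambda$. The map $\bP$ is the composition of $\bx$ with the linear map $L\colon \BZ^{I(\sph)}\to\BZ^{I(\sph)}$ given by \eqref{eq:coordinates-pants}. So I will compute the kernel and image of $L$, and then interpret the fibres of $\bP$ geometrically.

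For part (a), I will first compute $\ker L$ over $\mathbb Q$. The equations $n_{t1}=0$ give
\[
x_{32}=-x_{11},\qquad x_{22}=-x_{31},\qquad x_{12}=-x_{21}.
\]
The equations $n_{t2}=0$ then give $x_{21}-x_{11}=x_{11}-x_{31}=x_{31}-x_{21}=-(x_v+x_{v'})$. Summing these forces $x_v+x_{v'}=0$, and then $x_{11}=x_{21}=x_{31}$. The condition $t_P=0$ gives $x_v=x_{v'}=0$, and $h_P=0$ gives $6x_{11}=0$. Hence $L$ is injective and $\bP(W)=\bP(W')$ implies $\bx(W)=\bx(W')$. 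By Lemma~\ref{lem-domain}, $W$ and $W'$ are then equal in $\cll$, that is, they agree modulo flip moves and the equivalence moves \eqref{eq-flip-lines}--\eqref{eq-circle}.

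The remaining work in (a) is to show that, for two \emph{essential} diagrams in $P'$, these puncture moves reduce to the moves of Figure~\ref{fig:flip-pants} and its rotations. For this I will use conditions (C1) and (C2) of Definition~\ref{def-essential-pants}. By (C2), near each boundary $C_t$ the web looks like Figure~\ref{fig:eneral-position}. Under this normal form the moves \eqref{eq-flip-lines} and \eqref{eq-circle} become trivial. The only residual ambiguity is which side of the marked point a strand or H-rung passes, and this is exactly Figure~\ref{fig:flip-pants}. Condition (C1) rules out extra twisting along the $C_t$. I expect this translation step to be the main obstacle, since it requires a case check against the local pictures of \cite{ishibashi2024unbounded}.

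For part (b), the image is $\bP(\oB)=L(\bx(\oB))$. I will first argue that every element of $\cll$ is represented by an essential diagram: take the spiralling diagram in good position, cut off the spirals at a collar, and unspiral. Condition (C1) holds by the definition of the full twist, and (C2) holds by the normal form. Granting this, $\im\bP=L(\Lambda)$.

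Since $\Lambda$ is cut out by $n_{11},\dots,n_{32}\ge 0$ (these are exactly the six inequalities in \eqref{def-lambda}), it remains to describe the lattice $L(\BZ^8)$. I will invert $L$ over $\mathbb Q$ and solve for $x$ in terms of $(n,t_P,h_P)$. Setting $s=x_v+x_{v'}$, the relations above give
\[
3s = \textstyle\sum_t n_{t2}-\sum_t n_{t1}.
\]
Then $x_v=(s+t_P)/2$, and the $x_{ij}$ are determined by $h_P$ and the $n$'s with denominators $2$ and $3$. Integrality of all eight $x$'s translates into the congruences in \eqref{eq-domain-P}: the mod $3$ conditions come from $s$ and the $x_{ij}$, the mod $2$ condition comes from $h_P$, and the mod $6$ condition on $3t_P$ comes from requiring $x_v\in\BZ$. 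I will verify this by explicit Smith-form style elimination, checking both necessity and sufficiency.
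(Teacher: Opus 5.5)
Your plan matches the paper's proof. Injectivity of the linear change of variables \eqref{eq:coordinates-pants} (your kernel computation is correct) reduces (a) to Lemma~\ref{lem-domain}, and (b) is $L(\Lambda)$, whose description comes from integrality of the inverse formulas \eqref{eq-inverse} and yields exactly the congruences in \eqref{eq-domain-P}. The paper likewise takes for granted two steps you flag: that every element of $\mathcal{L}(\Sigma_{0,3})$ has an essential representative in $P'$, and that equality in $\mathcal{L}(\Sigma_{0,3})$ translates into the moves of Figure~\ref{fig:flip-pants}.

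One correction to your sketch of that last step: the swap \eqref{eq-flip-lines} is not removed by the normal form (C2). Two oppositely oriented parallel arcs joining two boundary components give, in either order, distinct essential diagrams in $P'$ with equal shear coordinates. So this swap is something the moves of Figure~\ref{fig:flip-pants} must account for, not something that becomes trivial.
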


\begin{proof}
Observe that the equations in \eqref{eq:coordinates-pants} are linearly independent. 
Their inverse transformation is given by

\begin{align}
\nonumber
    x_{11} &= \frac{h_P + 3n_{11} - n_{21} - 2n_{22} + n_{31} + 2n_{32}}{6},\\
    \nonumber
x_{12} &= \frac{-h_P + n_{11} + 2n_{12} - n_{21} - 2n_{22} + 3n_{31}}{6},\\
\nonumber
x_{21} &= \frac{h_P - n_{11} - 2n_{12} + n_{21} + 2n_{22} + 3n_{31}}{6},\\
\nonumber
x_{22} &= \frac{-h_P - n_{11} - 2n_{12} + 3n_{21} + n_{31} + 2n_{32}}{6},\\
\label{eq-inverse}
x_{31} &= \frac{h_P + n_{11} + 2n_{12} + 3n_{21} - n_{31} - 2n_{32}}{6},\\
\nonumber
x_{32} &= \frac{-h_P + 3n_{11} + n_{21} + 2n_{22} - n_{31} - 2n_{32}}{6},\\
\nonumber
x_v &= \frac{3t_P - n_{11} + n_{12} - n_{21} + n_{22} - n_{31} + n_{32}}{6},\\
\nonumber
x_{v'} &= \frac{-3t_P - n_{11} + n_{12} - n_{21} + n_{22} - n_{31} + n_{32}}{6}.
\end{align}

Part~(a) follows directly from Lemma~\ref{lem-domain}.

For part~(b), using the equations in \eqref{eq:coordinates-pants} together with Lemma~\ref{lem-domain}, one readily checks that any element
\[
(n_{11}, n_{12}, n_{21}, n_{22}, n_{31}, n_{32}, t_P, h_P)\in \im \bP
\]
satisfies the relations in \eqref{eq-domain-P}.

Conversely, let $(n_{11}, n_{12}, n_{21}, n_{22}, n_{31}, n_{32}, t_P, h_P)\in \BZ^{I(\Sigma_{0,3})}$ satisfy \eqref{eq-domain-P}.
Define 
$$(x_{11}, x_{12}, x_{21}, x_{22}, x_{31}, x_{32}, x_v, x_{v'})\in \mathbb Q^{I(\Sigma_{0,3})}$$
by the formulas in \eqref{eq-inverse}.
It is straightforward to verify that
\[
(x_{11}, x_{12}, x_{21}, x_{22}, x_{31}, x_{32}, t_v, t_{v'}) \in \Lambda
\]
(see \eqref{def-lambda}).
By Lemma~\ref{lem-domain}, there exists $W\in \oB$ such that
\[
\bx(W) = (x_{11}, x_{12}, x_{21}, x_{22}, x_{31}, x_{32}, x_v, x_{v'}).
\]
Applying $\bP$, we obtain
\[
\bP(W) = (n_{11}, n_{12}, n_{21}, n_{22}, n_{31}, n_{32}, t_P, h_P),
\]
which completes the proof.
\end{proof}

\begin{remark}\label{rem-inva}
Note that the congruence
\[
n_{12} + n_{22} + n_{32} \equiv
n_{11} + n_{21} + n_{31} \pmod{3}
\]
implies
\begin{align*}
    n_{11}- n_{12} -n_{21}  + n_{22}
    \equiv
    n_{21}- n_{22} -n_{31}  + n_{32} 
    \equiv
    n_{31}- n_{32} -n_{11} + n_{12}
    \pmod{3}.
\end{align*}
Consequently, condition~\eqref{eq-domain-P} is invariant under the cyclic permutation
\[
n_{1j}\mapsto n_{2j}\mapsto n_{3j}\mapsto n_{1j}.
\]
\end{remark}

\def\bP{\mathbb{P}}
\def\cN{\mathcal {N}}

\section{Coordinates for non-elliptic ${\rm SL}_3$ web diagrams on closed surfaces}\label{Sec-coord}

In this section, we use the results developed in \S\ref{Sec-annulus} and \S\ref{Sec-pants} to construct coordinates for non-elliptic web diagrams on a closed surface $\fS_g$.
We divide the construction into two cases: the case $g\geq 2$ (Definition~\ref{def-DT-coordinates}) and the case $g=1$ (Definition~\ref{def-closed-torus}).
We then prove that these coordinates uniquely determine a non-elliptic web diagram (Theorems~\ref{thm:DT-coordinates} and \ref{thm-torus}), which constitute the main results of this paper.
As a consequence, we obtain a parametrization of the non-elliptic basis elements of $\cS(\fS_g)$.

\subsection{The genus is more than one}

 Let $\mathcal P=\{C_j\}_{1\leq j\leq 3g-3}$ be an oriented pants decomposition of $\Sigma_g$ $(g\geq 2)$ with dual graph $\Gamma$, and let $W\in B_{\fS}$ be a non-elliptic web diagram in $\fS_g$ in general position with respect to $\{C_j\}_{1\leq j\leq 3g-3}$.
We write $r=3g-3$, let $\mathbb P$ denote the set of pairs of pants determined by $\{C_j\}_{1\leq j\leq 3g-3}$, and for each $j$ let
\[
\cN_j := N(C_j)\setminus \bigl(\partial N(C_j)\cap \Gamma\bigr).
\]
We identify each $\cN_j$ with $\bA'$ so that $C_j$ is identified with the curve $C$ in Figure~\ref{fig:curve-C}, preserving the orientation.

\begin{definition}\label{def-DT-coordinates}
We define the coordinate of $W$ (with respect to $\cP$) by
\begin{align*}
\kappa(W)
= \Bigl(
    (n_{j1}(W))_{1\leq j\leq r},\,
    (n_{j2}(W))_{1\leq j\leq r},\,
    (t_{j1}(W))_{1\leq j\leq r},\,
    (t_{j2}(W))_{1\leq j\leq r},\,
    (t_P(W))_{P\in \bP},\,
    (h_P(W))_{P\in \bP}
\Bigr),
\end{align*}
where
\begin{align*}
    n_{j1}(W) &= i_1(C_j, W) \quad \text{(see \eqref{def-i+-}), for $1\leq j\leq r$},\\
    n_{j2}(W) &= i_2(C_j, W) \quad \text{(see \eqref{def-i+-}), for $1\leq j\leq r$},\\
    t_{j1}(W) &= t_1(\cN_j\cap W) \quad \text{(see Definition~\ref{def-twist-number}), for $1\leq j\leq r$},\\
    t_{j2}(W) &= t_2(\cN_j\cap W) \quad \text{(see Definition~\ref{def-twist-number}), for $1\leq j\leq r$},\\
    t_P(W) &= t_P(P\cap W) \quad \text{(see Definition~\ref{def:coordinates-pants}), for $P\in\bP$},\\
    h_P(W) &= h_P(P\cap W) \quad \text{(see Definition~\ref{def:coordinates-pants}), for $P\in\bP$}.
\end{align*}
\end{definition}

We have the following. 

\begin{lemma}
The coordinates defined in Definition~\ref{def-DT-coordinates} are well defined.
\end{lemma}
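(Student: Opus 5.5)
The plan is to show that each of the six families of coordinates in Definition~\ref{def-DT-coordinates} does not depend on the choice of general position representative of $W$. Two representatives in general position with respect to $\mathcal P$ are related, by Corollary~\ref{cor-position-unique}, by a finite sequence of crossbar path moves (Figure~\ref{fig:crross-bar-pass}) and flip moves (Figure~\ref{fig:flip}). It therefore suffices to check invariance of every coordinate under a single move of each type. Before doing so, we check that each local ingredient is defined at all.

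\textbf{Existence of the local data.} For each $j$, $W\cap\cN_j$ contains no elliptic faces, since $W$ is non-elliptic, and $W$ is in minimal position with respect to $\partial N(C_j)$. Proposition~\ref{prop-bigon}(a) then shows that $W\cap \cN_j$ is a non-elliptic braided web in $\bA'$. Lemma~\ref{lem-A} and the well-definedness lemma following Definition~\ref{def-twist-number} give $\bt(W\cap\cN_j)$, hence $t_{j1}(W)$ and $t_{j2}(W)$.

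\textbf{The intersection numbers.} The numbers $n_{j1}(W)$ and $n_{j2}(W)$ are intrinsic invariants of the isotopy class of $W$, by Proposition~\ref{prop-bigon}(b). They also agree with $n_1$ and $n_2$ of $W\cap\cN_j$, using $n_i(W)=i_i(C,W)$ in the annulus.

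\textbf{The pants coordinates.} For each $P\in\bP$, condition (c) of Definition~\ref{def-general-position} says that $W\cap P'$ is essential. Hence it lies in $\oB$, and $t_P(W)$, $h_P(W)$ are defined through Lemma~\ref{lem-domain}, which gives well-defined shear coordinates on $\mathcal L(\Sigma_{0,3})$.

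\textbf{Invariance under flip moves.} A flip move is supported in a disk. If that disk lies inside some $\cN_j$, the move is a planar isotopy of crossingless diagrams in $\bA'$. By Lemma~\ref{lem-A}(b) the strict minimal braid representative is unique, and the line-circle type is unchanged, so $\bt$ is unchanged. If the disk lies inside some $P'$, the flip is one of the equivalence moves of $\mathcal L(\Sigma_{0,3})$, so the lamination class and hence $\bx$ are unchanged. A flip straddling $\partial N(C_j)$ can first be pushed off the boundary curve by an isotopy preserving minimal position.

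\textbf{Invariance under crossbar path moves.} This is the main obstacle. A crossbar path move slides an H-crossbar from an annulus $\cN_j$ across $\partial N(C_j)$ into an adjacent pair of pants, or back. It changes $W\cap\cN_j$ by adding or removing a crossing in the braid representative. It also changes $W\cap P$ by modifying the endpoint pattern near the puncture $C_j$.

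\begin{itemize}
\item On the annulus side, I would compute directly that such a slide preserves the numbers of increasing and decreasing curves, and preserves the twist sums $t_1$ and $t_2$. The point is that moving a crossbar past the boundary changes which strands are connected, but not the total winding of the increasing family or of the decreasing family relative to the fixed marked points $\partial N(C_j)\cap\Gamma$. The marked points are never crossed, by condition (b) of Definition~\ref{def-general-position}.
\item On the pants side, I would argue that the two laminations in $\Sigma_{0,3}$ are equivalent. After spiralling clockwise at the puncture, a crossbar near the puncture is absorbed by the equivalence moves \eqref{eq-flip-lines} and \eqref{eq-circle}. Condition (C2) of Definition~\ref{def-essential-pants} fixes the collar picture of Figure~\ref{fig:eneral-position}, which ensures that the slide stays within this equivalence. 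Hence $\bx(W\cap P)$, and with it $t_P$ and $h_P$, are unchanged.
\end{itemize}

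If a direct check is awkward, the fallback is to compute the local contributions to $x_v$, $x_{v'}$ and the edge coordinates via the elementary braid decomposition used in the proof of Proposition~\ref{prop:intersection-coordinates}. Only the configuration near $D^*(p)$ changes under the slide.

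Combining these steps, every component of $\kappa(W)$ depends only on the isotopy class of $W$. This proves the lemma.
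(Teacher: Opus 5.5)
Your proposal follows the paper's proof, which consists exactly of invoking Corollary~\ref{cor-position-unique} for $t_{j1},t_{j2},t_P,h_P$ and Proposition~\ref{prop-bigon}(b) for $n_{j1},n_{j2}$; the paper does not spell out the move-by-move invariance checks you add. If you keep that extra layer, three points need fixing.
\begin{itemize}
\item By Lemma~\ref{lem-flip}, flip moves are exactly the extra moves needed beyond planar isotopy. So ``the move is a planar isotopy in $\mathbb A'$'' is not a valid reason in the annulus case.
\item The equivalence in $\mathcal L(\Sigma_{0,3})$ that deletes a crossbar joining two strands ending at the same puncture is the unlabeled move displayed right after \eqref{eq-flip-lines}, not \eqref{eq-flip-lines} itself.
\item Condition (b) of Definition~\ref{def-general-position} only constrains the endpoints of each individual diagram. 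It does not prevent a move from sliding endpoints past a point of $\Gamma\cap\partial N(C_j)$, which would change $t_{j1},t_{j2}$. That guarantee has to come from the form of the moves in Corollary~\ref{cor-position-unique}.
\end{itemize}
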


\begin{proof}
The well-definedness of $t_P$, $h_P$, $t_{i1}$, and $t_{i2}$, follows from Corollary~\ref{cor-position-unique}.
The well-definedness of $n_{i1}$ and $n_{i2}$ follows from  Proposition~\ref{prop-bigon}(b).
\end{proof}

Define
\[
K := \BN^r \times \BN^r \times \BZ^r \times \BZ^r \times \BZ^{\bP} \times \BZ^{\bP}.
\]
Let
\[
\Bigl(
    (n_{i1})_{1\leq i\leq r},\,
    (n_{i2})_{1\leq i\leq r},\,
    (t_{i1})_{1\leq i\leq r},\,
    (t_{i2})_{1\leq i\leq r},\,
    (t_P)_{P\in \bP},\,
    (h_P)_{P\in \bP}
\Bigr) \in K.
\]

We use Figure~\ref{fig:placeholder} to illustrate the clockwise orientation of the three boundary components of the pair of pants.
The counterclockwise orientation is defined to be the opposite one. 

\begin{figure}
    \centering
    \includegraphics[width=0.2\linewidth]{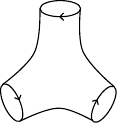}
    \caption{The clockwise orientation of the three boundary components of the pair of pants.}
    \label{fig:placeholder}
\end{figure}

For each $P\in \bP$, suppose that $P$ is bounded by the curves $C_j$, $C_k$, and $C_m$.
We identify $P$ with the pair of pants in Figure~\ref{fig:sphere} so that $C_j$, $C_k$, and $C_m$ correspond to $C_1$, $C_2$, and $C_3$ in Figure~\ref{fig:sphere}, respectively.
For a boundary component of $P$, if $C_j$ (resp. $C_k$ or $C_m$) is oriented counterclockwise  (resp. clockwise), we define
\begin{align}
    &n_{11}^P = n_{j1}, \quad n_{12}^P = n_{j2}
    \qquad
    (\text{resp. } n_{11}^P = n_{j2}, \; n_{12}^P = n_{j1}),\nonumber\\
    &n_{21}^P = n_{k1}, \quad n_{22}^P = n_{k2}
    \qquad
    (\text{resp. } n_{21}^P = n_{k2}, \; n_{22}^P = n_{k1}),
    \label{def-nij-P}\\
    &n_{31}^P = n_{m1}, \quad n_{32}^P = n_{m2}
    \qquad
    (\text{resp. } n_{31}^P = n_{m2}, \; n_{32}^P = n_{m1}).\nonumber
\end{align}

Although the quantities $n_{1t}^P, n_{2t}^P, n_{3t}^P$ for $t=1,2$ are not individually well defined, they are well defined up to the cyclic permutation
\[
n_{1t}\mapsto n_{2t}\mapsto n_{3t}\mapsto n_{1t}.
\]

\begin{definition}\label{def-image-theta}
Define the submonoid $\Theta\subset K$ to be the set of all
\[
\Bigl(
    (n_{i1})_{1\leq i\leq r},\,
    (n_{i2})_{1\leq i\leq r},\,
    (t_{i1})_{1\leq i\leq r},\,
    (t_{i2})_{1\leq i\leq r},\,
    (t_P)_{P\in \bP},\,
    (h_P)_{P\in \bP}
\Bigr) \in K
\]
satisfying the following conditions:
\begin{enumerate}
    \item For each $1\leq i\leq r$, we have $t_{i1}\geq 0$ (resp.\ $t_{i2}\geq 0$) whenever $n_{i1}=0$ (resp.\ $n_{i2}=0$).
    
    \item For each $P\in \bP$, the following congruences hold:
    \begin{align*}
        \begin{cases}
            n_{12}^P + n_{22}^P + n_{32}^P \equiv
            n_{11}^P + n_{21}^P + n_{31}^P \pmod{3}, \\[4pt]
            h_P + n_{11}^P + n_{21}^P + n_{31}^P \equiv 0 \pmod{2}, \\[4pt]
            h_P \equiv
            n_{11}^P + 2n_{21}^P - n_{12}^P + n_{22}^P \pmod{3}, \\[4pt]
            3t_P \equiv
            n_{12}^P + n_{22}^P + n_{32}^P
            - n_{11}^P - n_{21}^P - n_{31}^P \pmod{6}.
        \end{cases}
    \end{align*}
\end{enumerate}
By Remark~\ref{rem-inva}, the set $\Theta$ is well defined.
\end{definition}

The following theorem is our first main result. It shows that the coordinates defined in Definition~\ref{def-DT-coordinates} parametrize the non-elliptic web diagrams in $\fS_g$ for $g\geq 2$, and that the set of coordinates of all non-elliptic web diagrams in $\fS_g$ is precisely the monoid $\Theta$.
  
We next introduce a definition that will be used in the proof of this theorem.
A crossingless web diagram in $\fS_g$ is called \emph{almost non-elliptic} if it contains no $2$-gons and no trivial loops as in \eqref{elliptic-face}.

\begin{figure}[H]
    \centering
    \includegraphics[width=0.5\linewidth]{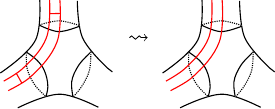}
    \caption{The procedure to remove 4-gons in $W'$.}
    \label{fig:remove-4-gon}
\end{figure}

\begin{theorem}\label{thm:DT-coordinates}
Let $\mathcal P=\{C_j\}_{1\leq j\leq r}$ be an oriented pants decomposition of $\Sigma_g$ $(g\geq 2)$ with a dual graph $\Gamma$, where $r=3g-3$.
The coordinate map
\[
\kappa \colon B_{\fS_g} \longrightarrow
\BN^r \times \BN^{r} \times \BZ^r \times \BZ^r \times \BZ^{\bP} \times \BZ^{\bP}
\]
is injective, where $B_{\fS_g}$ is defined in Definition~\ref{def-basis} and $\kappa$ is defined in Definition~\ref{def-DT-coordinates}.
Moreover,
\[
\im \kappa = \Theta.
\]
\end{theorem}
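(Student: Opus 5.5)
The plan has two halves: injectivity by cutting $W$ along the pants decomposition and gluing the pieces back, and surjectivity by building local pieces from prescribed coordinates and gluing them.

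\textbf{Injectivity.} Let $W,W'\in B_{\fS_g}$ satisfy $\kappa(W)=\kappa(W')$, both placed in general position (Definition~\ref{def-general-position}).

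First I will compare the pieces in the pants. Fix $P\in\bP$ bounded by $C_j,C_k,C_m$. By Proposition~\ref{prop:intersection-coordinates}, the numbers $n^P_{st}$ of \eqref{def-nij-P} are exactly the oriented endpoint counts $i_{s1},i_{s2}$ of $W\cap P'$. The reindexing in \eqref{def-nij-P} according to whether $C_j$ is clockwise or counterclockwise is precisely what converts $i_1(C_j,W),i_2(C_j,W)$ into ``pointing towards / away from the boundary''. Together with $t_P$ and $h_P$, this gives $\mathbf P(W\cap P')=\mathbf P(W'\cap P')$. Theorem~\ref{lem:image-P}(a) then says $W\cap P'$ and $W'\cap P'$ differ by the flip moves of Figure~\ref{fig:flip-pants}. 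These moves do not change the web in $\fS_g$, since they are absorbed by isotopy or by crossbar path moves across the adjacent annuli. So after modifying $W'$ we may assume $W\cap P'=W'\cap P'$ for every $P$.

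In particular, the endpoint patterns on each $\partial N(C_j)$ agree: the cyclic order of increasing and decreasing strands, read from the point $\Gamma\cap\partial N(C_j)$. Hence $W\cap\cN_j$ and $W'\cap\cN_j$ have the same signature \eqref{def-signature}. They also have the same $n_{j1},n_{j2},t_{j1},t_{j2}$. Theorem~\ref{prop-twist} therefore gives $W\cap\cN_j=W'\cap\cN_j$. This uses Proposition~\ref{prop-bigon}(a) and Lemma~\ref{lem-A} to know that $W\cap\cN_j$ is a non-elliptic braided web in $\bA'$. Gluing the pieces gives $W=W'$.

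\textbf{Subtle point in injectivity.} The signature on the annulus side must be read off from the pants side, and the flip moves in the pants must not alter it. I will check this directly from Figure~\ref{fig:flip-pants}: the moves are supported away from the boundary arcs, and condition (C2) fixes the collar picture (Figure~\ref{fig:eneral-position}).

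\textbf{The inclusion $\im\kappa\subseteq\Theta$.} Condition (1) is the image description in Theorem~\ref{prop-twist}. Condition (2) is \eqref{eq-domain-P} from Theorem~\ref{lem:image-P}(b), rewritten via \eqref{def-nij-P} and Remark~\ref{rem-inva}.

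\textbf{The inclusion $\Theta\subseteq\im\kappa$.} Given a point of $\Theta$, I proceed in order:
\begin{enumerate}
\item For each $P$, Theorem~\ref{lem:image-P}(b) produces an essential web $W_P$ in $P'$ with the prescribed coordinates.
\item The two pants adjacent to $\cN_j$ induce endpoint sets on the two boundary circles with $n_{j1}$ increasing and $n_{j2}$ decreasing strands, in some signatures.
\item Theorem~\ref{prop-twist} provides a non-elliptic braided web in $\cN_j$ with those endpoint data and twist numbers $t_{j1},t_{j2}$. It is either a strict minimal braid, realizing any signature on both sides with any total twist, or a line-circle web when one of $n_{j1},n_{j2}$ is zero.
\item Glue all the pieces to obtain a crossingless diagram $W$.
\end{enumerate}

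\textbf{Main obstacle.} The hard step is showing that the glued $W$ is non-elliptic and in general position, so that $\kappa(W)$ returns the prescribed data. Faces could form across the cuts: 4-gons made of two H's in $\cN_j$ together with H's or honeycomb edges in an adjacent pants. Note that $W$ is at least almost non-elliptic, because 2-gons and trivial loops cannot straddle an essential annulus cut in minimal position.

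If a 4-gon appears, I will reduce it as in Figure~\ref{fig:remove-4-gon}, replacing it by the same-degree term. My first attempt is to argue that no reduction is ever needed. The reason: $W\cap P'$ is essential, so condition (C1), after the twist, gives minimal position with the $\gamma_i$. The strict minimal braid in $\cN_j$ has no bigons by Lemma~\ref{lem-minimal-non}(b). A 4-gon straddling $\partial N(C_j)$ would then yield a bigon between $W$ and $C_j$, contradicting the minimal intersection of the pieces.

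If that argument fails, I will fall back on a graded argument in $\cS_{\cC}(\fS_g)$ with $\cC=\mathcal P$, via Lemma~\ref{lem-graded}. Removing a 4-gon preserves $i(\cC,\cdot)$ and all local coordinates, so the leading term is a basis element with the prescribed $\kappa$.
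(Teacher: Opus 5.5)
\textbf{Injectivity has a genuine gap.} The argument breaks at the sentence ``these moves do not change the web in $\fS_g$ \dots\ so after modifying $W'$ we may assume $W\cap P'=W'\cap P'$.'' The supporting claim in your ``subtle point'' paragraph is false: you say the moves of Figure~\ref{fig:flip-pants} are supported away from the boundary and leave the signatures unchanged.

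These moves come from the lamination equivalence \eqref{eq-flip-lines}. There, two parallel arcs joining two boundary circles with opposite orientations have their orientations exchanged. Such a move transposes adjacent opposite endpoint signs on the boundary circles it meets. It therefore changes the signatures \eqref{def-signature} of the neighbouring annulus pieces. This is why the paper can say that $W_3\cap\cN_i$ and $W_0\cap\cN_i$ have the same signature only \emph{after} the pants pieces have been made equal.

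A crossbar path move (Figure~\ref{fig:crross-bar-pass}) does realize such an exchange. However, it requires the two strands to be joined by a crossbar adjacent to the boundary in one of the two neighbouring annuli. For a given diagram and a given flip there need not be one. To make your route work, you would have to prove that every flip needed to pass from $W'\cap P'$ to $W\cap P'$ can be realized by such slides. That proof would have to use the non-ellipticity of $W'$ and the equality of the annulus twist data. This is essentially the injectivity statement itself. Corollary~\ref{cor-position-unique} only gives the converse direction.

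\textbf{How the paper handles it.} The paper does the comparison in the graded algebra $\cS_{\cP}(\fS_g)$ rather than by isotopy. Each pants flip on $W_1$ is implemented through the relations \eqref{4-gon} and \eqref{4-gon-re} (Figure~\ref{fig:moves-pants}). The pair of opposite strands is replaced by a configuration with crossbars outside $P$. The discarded turn-back smoothing has strictly smaller intersection with $\cP$, so it vanishes in the associated graded algebra.

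This produces an almost non-elliptic $W_2$ with $W_2\cap P=W_0\cap P$ for all $P$ and $[W_1]=[W_2]$. Removing the 4-gons in the annuli (Figure~\ref{fig:H-annulus}) gives $W_3$, which has the same twist data as $W_0$ and now also the same signatures. Theorem~\ref{prop-twist} then yields $W_3=W_0$, and Lemma~\ref{lem-graded} turns $[W_1]=[W_0]$ into $W_1=W_0$. You invoke the graded algebra only as a fallback for surjectivity, but it is the key tool for injectivity.

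\textbf{Surjectivity.} Your primary argument that no 4-gon can form is also wrong. Consider a 4-gon formed by a crossbar in each annulus adjacent to $P$, joined by two parallel opposite arcs through $P$. It meets $\partial N(C_j)$ in two different edges, and pushing its crossbar across the curve is a crossbar move that does not lower the intersection number, so there is no bigon. The paper explicitly allows such 4-gons (Figure~\ref{fig:remove-4-gon}). Your fallback of reducing them and keeping the top-degree term is what the paper does.
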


\begin{proof}

We first show that $\im \kappa = \Theta$.
By Proposition~\ref{prop:intersection-coordinates}, Theorems~\ref{prop-twist} and \ref{lem:image-P}, we have
\[
\im \kappa \subset \Theta .
\]
Now let
\[
\Bigl(
    (n_{i1})_{1\leq i\leq r},\,
    (n_{i2})_{1\leq i\leq r},\,
    (t_{i1})_{1\leq i\leq r},\,
    (t_{i2})_{1\leq i\leq r},\,
    (t_P)_{P\in \bP},\,
    (h_P)_{P\in \bP}
\Bigr) \in \Theta .
\]

For each $P \in \bP$, let
\[
P' = P \setminus (\partial P \cap \Gamma).
\]
By Proposition~\ref{prop:intersection-coordinates}, Theorem~\ref{lem:image-P}, and Definition~\ref{def-image-theta}, there exists an essential non-elliptic web diagram $W_P$ in $P'$ such that
\[
{\bf P}(W_P)
=
(n_{11}^P, n_{12}^P, n_{21}^P, n_{22}^P, n_{31}^P, n_{32}^P, t_P, h_P)
\in \mathbb{Z}^{I(\Sigma_{0,3})},
\]
where $n_{jt}^P$ is defined in \eqref{def-nij-P}, the labeling of $I(\Sigma_{0,3})$ is shown in Figure~\ref{fig:sphere}, and
${\bf P}$ is defined in Definition~\ref{def:coordinates-pants}.

For each $1 \leq i \leq r$, suppose that the annulus $N(C_i)$ is adjacent to the pairs of pants $P_1$ and $P_2$ (possibly $P_1 = P_2$).
We identify
\[
\cN_i := N(C_i) \setminus (\partial N(C_i) \cap \Gamma)
\]
with $\bA'$ (here $\bA'$ is obtained from the annulus by removing one point from each its boundary component), so that $C_i$ is identified with the curve $C$ in Figure~\ref{fig:curve-C}, with orientations preserved.
Then the quadruple $(n_{i1}, n_{i2}, t_{i1}, t_{i2})$ uniquely determines a non-elliptic braided web $W_i$ in $\cN_i$, up to signature, by Theorem~\ref{prop-twist}.
There is a unique choice of signature for $W_i$ that allows $W_i$ to be glued compatibly with
$W_{P_1}$ and $W_{P_2}$ when $\cN_i$ is glued to $P_1$ and $P_2$.

After gluing all annuli $N(C_i)$, $1 \leq i \leq r$, with all pairs of pants $P \in \bP$ to recover $\fS_g$,
and simultaneously gluing all webs $W_i$, $1 \leq i \leq r$, with the webs $W_P$, $P \in \bP$,
we obtain an almost non-elliptic web diagram $W'$ on $\fS_g$.
This web may contain $4$-gons, as illustrated in the left picture in Figure~\ref{fig:remove-4-gon}.
By removing these $4$-gons as in Figure~\ref{fig:remove-4-gon}, we obtain a non-elliptic web diagram $W$.
By construction, it is immediate that
\[
\kappa(W)
=
\Bigl(
    (n_{i1})_{1\leq i\leq r},\,
    (n_{i2})_{1\leq i\leq r},\,
    (t_{i1})_{1\leq i\leq r},\,
    (t_{i2})_{1\leq i\leq r},\,
    (t_P)_{P\in \bP},\,
    (h_P)_{P\in \bP}
\Bigr).
\]

We now prove that $\kappa$ is injective.
Suppose that $\kappa(W_1) = \kappa(W_0)$ for two non-elliptic web diagrams $W_1$ and $W_0$ in $\fS_g$ that are in general position with respect to $\cP$.
Recall the graded algebra $\cS_\cP(\fS_g)$ defined in \eqref{def-graded-algebra}.
To show that $W_1 = W_0$, it suffices to prove that (Lemma~\ref{lem-graded})
\[
[W_1] = [W_0] \in \cS_\cP(\fS_g).
\]

For each $P \in \bP$, Theorem~\ref{lem:image-P}(a) implies that $W_1 \cap P$ can be obtained from $W_0 \cap P$ by a sequence of moves shown in Figure~\ref{fig:flip-pants}, together with the moves obtained by rotating the pictures in Figure~\ref{fig:flip-pants} by $120^\circ$ and $240^\circ$.
By repeatedly applying moves in Figure~\ref{fig:moves-pants} to $W_1$, we obtain an almost non-elliptic web diagram $W_2$ on $\fS_g$ such that
$W_2 \cap P = W_0 \cap P$ for each $P \in \bP$.
Relations~\eqref{4-gon} and \eqref{4-gon-re} then imply that
\[
[W_1] = [W_2] \in \cS_\cP(\fS_g).
\]

Let $W_3$ be the web diagram obtained from $W_2$ by removing all $4$-gons in the annuli using the moves shown in Figure~\ref{fig:H-annulus}.
Similarly, we have
\[
[W_1] = [W_2] = [W_3] \in \cS_\cP(\fS_g).
\]
Since all $4$-gons in the annuli have been removed, Lemma~\ref{lem-minimal-non}(a) implies that
$W_3 \cap \cN_i$ is a non-elliptic
braided web.
Because $W_3 \cap P = W_0 \cap P$ for each $P \in \bP$, the webs
$W_3 \cap \cN_i$ and $W_0 \cap \cN_i$ have the same signature.
Moreover,
\[
{\bf t}(W_3 \cap \cN_i)
=
{\bf t}(W_1 \cap \cN_i)
=
{\bf t}(W_0 \cap \cN_i),
\]
where ${\bf t}$ is defined in Definition~\ref{def-twist-number}.
By Theorem~\ref{prop-twist}, it follows that
\[
W_3 \cap \cN_i = W_0 \cap \cN_i
\qquad \text{for all } 1 \leq i \leq r .
\]
Hence $W_3 = W_0$, and therefore
\[
[W_1] = [W_2] = [W_3] = [W_0] \in \cS_\cP(\fS_g).
\]
This completes the proof.

\end{proof}

\begin{figure}
    \centering
    \includegraphics[width=0.5\linewidth]{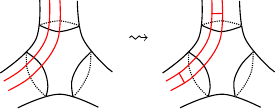}
    \caption{The procedure to obtain $W_2$.}
    \label{fig:moves-pants}
\end{figure}

\subsection{The closed torus}

Let $\gamma$ be an oriented closed curve in $\fS_1$, and fix a point $p \in \gamma$.
A non-elliptic web diagram $W$ on $\fS_1$ is said to be in \textbf{general position}
with respect to $\gamma$ if $W$ is in minimal intersection position with $\gamma$
and $p \notin W$.

Cutting $\fS_1$ along $\gamma$ and taking the closure yields an annulus, which we denote by $\bA_\gamma$.
The point $p$ gives rise to two points $p', p'' \in \partial \bA_\gamma$, one on each boundary component.
We write
\[
\bA_\gamma' := \bA_\gamma \setminus \{p', p''\}.
\]

Let $W_\gamma$ denote the web diagram on $\bA_\gamma'$ obtained from $W$ by cutting along $\gamma$.
By Proposition~\ref{prop-bigon}(a), the web $W_\gamma$ is a non-elliptic
braided web on $\bA_\gamma'$.

The orientation of $\gamma$ induces orientations on the two boundary components of $\bA_\gamma'$.
There is a unique identification of $\bA_\gamma'$ with $\bA'$ such that the induced orientations
of the boundary components of $\bA_\gamma'$ agree with the orientation of the curve $C$
in Figure~\ref{fig:curve-C}.

We now introduce the following coordinate for $W$.

\begin{definition}\label{def-closed-torus}
Define
\[
\kappa \colon B_{\fS_1} \longrightarrow \BN^2 \times \BZ^2, \qquad
W \longmapsto (n_1(W), n_2(W), t_1(W), t_2(W))
:= {\bf t}(W_\gamma),
\]
where ${\bf t}$ is defined in Definition~\ref{def-twist-number}.
\end{definition}

The following shows the well-definedness of 
$\kappa$ in Definition~\ref{def-closed-torus}. 

\begin{lemma}
The map $\kappa$ defined in Definition~\ref{def-closed-torus} is well defined; that is,
it is independent of the choice of the general position of $W$ with respect to $\gamma$.
\end{lemma}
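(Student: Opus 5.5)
The plan is to show that any two general positions $W, W'$ of the same non-elliptic web diagram with respect to $\gamma$ give the same ${\bf t}(W_\gamma)$. Since $\fS_1\setminus\gamma$ is an open annulus, we apply Corollary~\ref{Cor-minimal-unique} with the single curve $\gamma$ (more conveniently, with the two boundary curves of a thin annular neighbourhood $N(\gamma)$, viewed as parallel copies of $\gamma$). This reduces the question to four kinds of moves relating $W$ and $W'$: planar isotopy in the complement of $\gamma$ avoiding $p$, flip moves, crossbar and annulus-H moves near $\gamma$, and twists along $\gamma$. It therefore suffices to check that ${\bf t}(W_\gamma)$ is invariant under each move, together with moving strands across the point $p$.

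First, $n_1,n_2$ equal $i_1(\gamma,W)$ and $i_2(\gamma,W)$, using the identity $n_i(W_\gamma)=i_i(C,W_\gamma)$ stated after Definition~\ref{def-twist-number}, and these are well defined by Proposition~\ref{prop-bigon}(b). Second, planar isotopy and flip moves inside $\bA_\gamma'$ do not change the web diagram class in $\bA_\gamma'$, so ${\bf t}$ is unchanged by Lemma~\ref{lem-A}(b) and the line-circle classification. Third, crossbar and annulus-H moves across $\gamma$ replace $W_\gamma$ by a web whose strict minimal braid representative is the same. We would verify this locally: the move slides an H across the cut, which in the braid picture just moves a crossing from one end of the annulus to the other through the gluing. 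The signature, and the pairing of increasing and decreasing curves, are preserved, and the twist of each curve is unchanged.

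The main obstacle is the twist move along $\gamma$, equivalently sliding an endpoint past $p$. Cutting at $\gamma$ after a full twist changes $W_\gamma$ by twisting every increasing and every decreasing strand once around the annulus near one boundary. Naively this shifts $t_1$ by $n_1$ and $t_2$ by $n_2$. The point is that in the torus, a twist along $\gamma$ itself is an isotopy of $W$ only as a rotation: pushing strands across $\gamma$ near the seam changes $W_\gamma$ by a rotation of both boundary components simultaneously, not a Dehn twist. We would check that such a simultaneous rotation by one strand is realized in $\bA'$ by moving one endpoint past $p'$ and its partner past $p''$. In the braid picture this decreases the twist of the curve through one end by one and increases it at the other end by one, so each $t(c)$, and hence $t_1$ and $t_2$, is preserved.

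For line-circle webs, a closed loop parallel to $\gamma$ can be absorbed, and Lemma~\ref{lem-isotopy} shows that the pair $(t,m)$ is unchanged. Combining the three checks gives the invariance of $\kappa$.
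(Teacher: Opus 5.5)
Your route differs from the paper's, and it can be made to work, but the step you identify as the main obstacle is argued by a mechanism that is false.

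\textbf{How the paper argues.} The paper never runs through the move list of Corollary~\ref{Cor-minimal-unique}. It takes a parallel copy $\beta$ of $\gamma$ with a point $c\in\beta$. The curves $\gamma\cup\beta$ cut $\fS_1$ into two annuli, and on each of them $W$ restricts to a non-elliptic braided web (Proposition~\ref{prop-bigon}(a)). Both $W_\gamma$ and $W_\beta$ are obtained by gluing these same two pieces, so $t_j(W_\gamma)=t_j(W\cap\bA_1')+t_j(W\cap\bA_2')=t_j(W_\beta)$. Additivity of the \emph{summed} twist numbers under concatenation handles every move at once.

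\textbf{The gap in the key step.} For sliding a point of $W\cap\gamma$ past $p$, you say that one curve loses a twist at one end and gains one at the other, so each $t(c)$ is preserved. This is wrong. The endpoint $a_1\in A_0$ that crosses $p'$ and its glued partner $a_{n+1}\in A_1$ that crosses $p''$ are in general endpoints of \emph{different} arcs of the braid.

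For example, take a connected simple closed curve meeting $\gamma$ in two points. The arc of $W_\gamma$ starting at $a_1$ must end at $a_4$, since otherwise it would close up on its own. So the slide changes the twist numbers of two different arcs, by opposite amounts $\pm1$. Individual twist numbers are therefore not invariant.

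What saves $t_1$ and $t_2$ is that $a_1$ and $a_{n+1}$ are the two sides of one point of $W\cap\gamma$. Hence $s(a_1)=s(a_{n+1})$, exactly as used in the proof of Theorem~\ref{thm-torus}. The $+1$ and $-1$ therefore fall into the same sum: $t_1$ if that strand is increasing, $t_2$ if decreasing. This is precisely why only the sums are invariants, and your proof has to say so.

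\textbf{The crossbar claim.} Your statement that a crossbar move across $\gamma$ leaves the strict minimal braid representative and the signature unchanged is also incorrect. Sliding an H through $\gamma$ swaps a $+$ and a $-$ at two adjacent positions on both $A_0$ and $A_1$, and moves the corresponding endpoints. Compare Figure~\ref{fig:signature-move}, which the paper uses in the proof of Theorem~\ref{thm-torus} precisely to change signatures. ${\bf t}$ is still preserved, but for a different reason. The endpoints only move between adjacent positions not separated by $p'$ or $p''$, so no arc's twist number changes; that is what you should argue.

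\textbf{Remaining checks.} You should also check the annulus-H moves and the line-circle case explicitly, rather than saying loops ``can be absorbed''.

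\textbf{Comparison.} With these repairs, the move-by-move approach depends on the completeness of the move list in Corollary~\ref{Cor-minimal-unique}. The paper's argument avoids tracking moves altogether by moving the cutting curve instead of the web.
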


\begin{proof}
It suffices to prove the following statement.
Let $\beta$ be another oriented closed curve in $\fS_1$ parallel to $\gamma$, and fix a point $c \in \beta$.
Then
\[
{\bf t}(W_\gamma) = {\bf t}(W_\beta),
\]
where $W \in B_{\fS_1}$ is in general position with respect to both $\gamma$ and $\beta$.

By Proposition~\ref{prop-bigon}(b), we have
\[
n_1(W_\gamma) = n_1(W_\beta),
\qquad
n_2(W_\gamma) = n_2(W_\beta).
\]

The curves $\gamma$ and $\beta$ bound two annuli in $\fS_1$, which we denote by $\bA_1$ and $\bA_2$.
Set
\[
\bA_1' := \bA_1 \setminus \{p,c\},
\qquad
\bA_2' := \bA_2 \setminus \{p,c\}.
\]
By Proposition~\ref{prop-bigon}(a), the web
$W \cap \bA_1'$ (resp.\ $W \cap \bA_2'$) is a non-elliptic
braided web on $\bA_1'$ (resp.\ $\bA_2'$).

It follows immediately that
\[
t_j(W_\gamma)
=
t_j(W \cap \bA_1') + t_j(W \cap \bA_2')
=
t_j(W_\beta),
\qquad j=1,2.
\]
This proves the lemma.
\end{proof}

The following theorem is our second main result. It shows that the coordinates of non-elliptic web diagrams in the closed torus $\fS_1$ parametrize these web diagrams.

\begin{theorem}\label{thm-torus}
    Let $\gamma$ be an oriented closed curve in $\fS_1$, and fix a point $p \in \gamma$.
    Then the map
    \[
        \kappa \colon B_{\fS_1} \longrightarrow \BN^2 \times \BZ^2
    \]
    is injective. Moreover,
    \begin{align}\label{eq-im-ka-torus}
        \im \kappa
        =
        \bigl\{
        (n_1,n_2,t_1,t_2) \in \BN^2 \times \BZ^2
        \;\big|\;
        t_i \ge 0 \text{ if } n_i = 0,\ i=1,2
        \bigr\}.
    \end{align}
\end{theorem}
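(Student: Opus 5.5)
The plan is to reduce the torus case to the annulus classification in Theorem~\ref{prop-twist}, using the cut-and-glue correspondence between webs on $\fS_1$ in general position with respect to $\gamma$ and non-elliptic braided webs on $\bA_\gamma'$ whose two boundary signatures agree.

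\textbf{Image.} The inclusion of $\im\kappa$ in the right-hand side of \eqref{eq-im-ka-torus} is immediate: $\kappa(W)={\bf t}(W_\gamma)$, and by Proposition~\ref{prop-bigon}(a) the web $W_\gamma$ is a non-elliptic braided web, so Theorem~\ref{prop-twist} applies.

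For the reverse inclusion, take $(n_1,n_2,t_1,t_2)$ in the right-hand side.
\begin{itemize}
\item If it is realized by a line-circle web (say $n_2=0$ with $t_2=m>0$, or symmetrically $n_1=0$), glue the line-circle web in $\bA'$ along its two boundary components. Its boundary points are all of one type, so any identification of $A_0$ with $A_1$ respecting the marked points matches them.
\item Otherwise there is a strict minimal braid $\cB$ with $n_1$ increasing and $n_2$ decreasing curves and total twists $t_1,t_2$. I choose its signature on $A_0$, reading from the puncture, as $(+^{n_1},-^{n_2})$, and arrange that the endpoints on $A_1$ carry the same cyclic pattern starting at the puncture. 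This is possible because the twist numbers can be distributed among the curves so that the increasing block shifts by exactly $t_1$ and the decreasing block by exactly $t_2$, and the total twist is read off relative to the marked point.
\end{itemize}
Gluing $A_0$ to $A_1$ with $p'\sim p''$ then gives a crossingless web $W$ on $\fS_1$ with $W_\gamma=H(\cB)$.

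It remains to check that $W$ is non-elliptic on $\fS_1$, not just on $\bA'$. Any elliptic face of $W$ that crosses $\gamma$ would give a bigon between $W$ and $\gamma$, contradicting minimal position. I would verify minimal position by the fact that $H(\cB)$ has no $4$-gons and $\cB$ is minimal, so no face of $W$ cut by $\gamma$ can be a disk bounded by fewer than $6$ edges. Faces not meeting $\gamma$ lie in $\bA'$ and are already non-elliptic. Once minimal position holds, $\kappa(W)={\bf t}(W_\gamma)$ is the prescribed tuple.

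\textbf{Injectivity.} Suppose $\kappa(W)=\kappa(W')$ with both webs in general position. Ideally I would show that the signature of $W_\gamma$ is determined by $\kappa$ after moving the endpoints along $\gamma$ past $p$. The cleaner route is the graded-algebra argument of Theorem~\ref{thm:DT-coordinates}:
\begin{enumerate}
\item Work in $\cS_{\{\gamma\}}(\fS_1)$.
\item Use crossbar moves (Figure~\ref{fig:cb}) and annulus-H moves (Figure~\ref{fig:H-moves}) near $\gamma$, which preserve the class modulo lower filtration by \eqref{4-gon} and \eqref{4-gon-re}, to bring $W'$ to a diagram $W''$ in which the signature of $W''_\gamma$ on $A_0$ equals that of $W_\gamma$.
\item Observe that along a closed curve only the cyclic pattern matters, and it can be rotated past $p$ by isotopy of $\fS_1$. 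That rotation changes the twist numbers of the $\bA'$ picture coherently on both sides, so ${\bf t}$ is unchanged.
\item Apply Theorem~\ref{prop-twist}: equal coordinates and equal signature give $W''_\gamma=W_\gamma$. Hence $[W']=[W]$, and Lemma~\ref{lem-graded} gives $W=W'$.
\end{enumerate}

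\textbf{Main obstacle.} I expect the hardest step to be controlling the signature. The increasing and decreasing endpoints along $\gamma$ may interleave differently for $W$ and $W'$, and I must show that such interleavings are related by moves that are trivial in the graded algebra without changing ${\bf t}$. I would first try the reordering move of Figure~\ref{fig:reorder}, now with $\gamma$ playing the role of the closed curve, to sort endpoints into blocks $(+^{n_1},-^{n_2})$. I would then check that this sorting changes the sums $t_1,t_2$ by zero, since each swap adds opposite twists at the two ends of a single curve.
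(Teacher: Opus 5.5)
Your overall strategy is the paper's: cut along $\gamma$, apply Theorem~\ref{prop-twist} on $\bA_\gamma'$, glue back, and prove injectivity in the $\gamma$-graded algebra. However, the surjectivity step rests on a false claim.

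\textbf{Surjectivity.} After gluing $H(\cB)$ along $\gamma$, the web $W$ need not be non-elliptic. Minimality of $\cB$ in $\bA'$ does not imply that the closed-up increasing and decreasing multicurves are in minimal position on $\fS_1$. A bigon between them that crosses $\gamma$ becomes, after H-resolution, a $4$-gon through which $\gamma$ passes via two opposite edges. Such a face produces no bigon between $W$ and $\gamma$, so neither of your justifications holds: it is false that an elliptic face crossing $\gamma$ gives a bigon with $\gamma$, and false that no face cut by $\gamma$ has fewer than $6$ sides.

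Your own block choice already fails. Take $n_1=n_2=2$ with signature $(+,+,-,-)$ on both boundary circles. Let the increasing arcs have total twist $1$, so they close up to a single simple closed curve $c$ meeting $\gamma$ twice. Choose $t_2$ so that the decreasing arcs close up to the orientation-reverse $d$ of a parallel copy of $c$. Two disjoint parallel copies of such a curve meet $\gamma$ in alternating points. Hence, with the block pattern, the minimal braid in $\bA'$ has two crossings, while $c$ and $d$ can be made disjoint on $\fS_1$. The two crossings therefore bound a bigon, necessarily across $\gamma$, and the glued web contains a $4$-gon.

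The paper anticipates exactly this. It glues to an \emph{almost} non-elliptic $E'$ and then removes the $4$-gons across $\gamma$. Each is replaced by the two parallel arcs that still cross $\gamma$, which is the leading term of \eqref{4-gon}/\eqref{4-gon-re} in the graded algebra. To close your gap you need this step plus a check that it preserves $\kappa$:
\begin{itemize}
\item each removal only exchanges two adjacent signs on $\gamma$, so $n_1,n_2$ are unchanged;
\item $t_1,t_2$ are unchanged because they are the algebraic intersection numbers of the increasing/decreasing multicurves with the closed curve through $p$ obtained by gluing the arc from $p'$ to $p''$.
\end{itemize}
Alternatively, you could pick the signature realized by the torus-minimal position of the two multicurves, but you would have to justify that choice.

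\textbf{Injectivity.} Two points need repair.
\begin{itemize}
\item Crossbar and annulus-H moves are isotopies and cannot change how endpoints interleave on $\gamma$. For example, the disjoint pair $c\sqcup d$ above can never be isotoped into block signature. So your Step~2 must use a move that is trivial only in the graded algebra. This is the paper's Figure~\ref{fig:signature-move}, i.e.\ your reordering move of Figure~\ref{fig:reorder} with $\gamma$ as the closed curve, which inserts a $4$-gon across $\gamma$.
\item After such moves the web in $\bA_\gamma'$ may contain interior $4$-gons, and Theorem~\ref{prop-twist} applies only to non-elliptic braided webs. You must first remove them as in Figure~\ref{fig:H-annulus}, which preserves both the signature and ${\bf t}$. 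This is the paper's passage $W_1\to W_2$.
\end{itemize}
Sorting both webs to a fixed block pattern is unnecessary; it suffices to move one signature to the other's.
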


\begin{proof}
    We first prove the injectivity of $\kappa$.
    Suppose that $\kappa(W) = \kappa(D)$ for any two $W,D\in B_{\fS_1}$.
    Then $\bt(W_\gamma) = \bt(D_\gamma)$.

    Let
    \[
        {\bf s}(W_\gamma)
        =
        (s_1,\ldots,s_n,s_{n+1},\ldots,s_{2n}) \in \{-,+\}^{2n},
    \]
    where ${\bf s}$ is the signature defined in~\eqref{def-signature} and
    $n = i(\gamma,W)$.
    Since $W_\gamma$ is obtained from $W$ by cutting along $\gamma$, we have
    $s_i = s_{n+i}$ for $1 \le i \le n$.
    By applying a sequence of moves in Figure~\ref{fig:signature-move}, we obtain
    a braided web $W_1$ on $\bA_\gamma'$ such that
    ${\bf s}(W_1) = {\bf s}(D_\gamma)$.

    Let $W_2$ be obtained from $W_1$ by removing all $4$-gons as in
    Figure~\ref{fig:H-annulus}.
    Then
    \[
        {\bf t}(W_2)
        =
        {\bf t}(W_\gamma)
        =
        {\bf t}(D_\gamma),
        \qquad
        {\bf s}(W_2) = {\bf s}(D_\gamma).
    \]
    Lemma~\ref{lem-minimal-non}(a) and
    Theorem~\ref{prop-twist} therefore imply that $W_2 = D_\gamma$.

    When gluing $\bA_\gamma$ along $\gamma$ to recover $\fS_1$, we simultaneously
    glue $W_1$ (resp.\ $W_2$) to an almost non--elliptic web diagram
    $W_1'$ (resp.\ $W_2'$) in $\fS_1$.
    Since $W_2 = D_\gamma$, we have $W_2' = D$.
    Relations~\eqref{4-gon} and~\eqref{4-gon-re} imply that
    \[
        [W] = [W_1'] = [W_2'] = [D] \in \cS_C(\fS_1),
    \]
    where $\cS_C(\fS_1)$ is the graded algebra defined in~\eqref{def-graded-algebra}.
    By Lemma~\ref{lem-graded}, it follows that $W = D$.

    We now prove~\eqref{eq-im-ka-torus}.
    Theorem~\ref{prop-twist} shows that
    \[
        \im \kappa
        \subset
        \bigl\{
        (n_1,n_2,t_1,t_2) \in \BN^2 \times \BZ^2
        \;\big|\;
        t_i \ge 0 \text{ if } n_i = 0,\ i=1,2
        \bigr\}.
    \]

    Conversely, let $(n_1,n_2,t_1,t_2) \in \BN^2 \times \BZ^2$ satisfy
    $t_i \ge 0$ whenever $n_i = 0$ for $i=1,2$.
    Choose a signature
    ${\bf a} = (a_1,\ldots,a_n,a_{n+1},\ldots,a_{2n}) \in \{-,+\}^{2n}$
    with $a_i = a_{n+i}$ for $1 \le i \le n$.
    Theorem~\ref{prop-twist} guarantees the existence of a non-elliptic
    braided web $E$ on $\bA_\gamma'$ such that
    \[
        {\bf t}(E) = (n_1,n_2,t_1,t_2)\quad
        \text{and} \quad
        {\bf s}(E)={\bf a}.
    \]
    Gluing $\bA_\gamma$ along $\gamma$ yields an almost non-elliptic web diagram
    $E'$ in $\fS_1$.
    Let $E''$ be the non-elliptic web diagram obtained from $E'$ by removing all
    $4$-gons; this procedure is unique and satisfies $[E''] = [E']$.
    Then
    \[
        \kappa(E'')
        =
        {\bf t}(E''_\gamma)
        =
        {\bf t}(E)
        =
        (n_1,n_2,t_1,t_2),
    \]
    completing the proof.
\end{proof}

\begin{figure}
    \centering
    \includegraphics[width=0.7\linewidth]{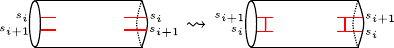}
    \caption{A move to swap signatures, where $s_i\neq s_{i+1}$.}
    \label{fig:signature-move}
\end{figure}

\bibliography{ref.bib}

\end{document}

%% file: 2-gon-un.tex
\raisebox{-.05in}{
	\begin{tikzpicture}
		\tikzset{->-/.style=
			
			{decoration={markings,mark=at position #1 with
					
					{\arrow{latex}}},postaction={decorate}}}
        \filldraw[draw=white,fill=gray!20] (0,0.1) rectangle (1.2, 0.7);
		\draw [line width =0.8pt](0,0.4)--(0.3,0.4);
        \draw [line width =0.8pt](0.9,0.4)--(1.2,0.4);
		\draw [line width =0.8pt] (0.6,0.4) ellipse (0.3cm and 0.2cm);
	\end{tikzpicture}
}

%% file: 4-gon-un.tex
\raisebox{-.05in}{
	\begin{tikzpicture}
		\tikzset{->-/.style=
			
			{decoration={markings,mark=at position #1 with
					
					{\arrow{latex}}},postaction={decorate}}}
        \filldraw[draw=white,fill=gray!20] (0.2,0.5) rectangle (1, 1.1);
		\draw [line width =0.8pt](0.4,0.6)--(0.2,0.6);
		\draw [line width =0.8pt](0.2,1)--(0.4,1);
		\draw [line width =0.8pt](1,0.6)--(0.8,0.6);
        \draw [line width =0.8pt](0.8,1)--(1,1);
        \draw[line width =0.8pt] (0.4,0.6)rectangle (0.8, 1);
	\end{tikzpicture}
}

%% file: un-trivial-knot.tex
\raisebox{-.10in}{
	\begin{tikzpicture}
		\tikzset{->-/.style=
			
			{decoration={markings,mark=at position #1 with
					
					{\arrow{latex}}},postaction={decorate}}}
        \filldraw[draw=white,fill=gray!20] (0,0) rectangle (0.8, 0.8);
        \draw[line width =0.8pt] (0.4,0.4) circle (0.3cm);
	\end{tikzpicture}
}

%% file: positive-crossing.tex
\raisebox{-.20in}{
	
	\begin{tikzpicture}
		\tikzset{->-/.style=
			
			{decoration={markings,mark=at position #1 with
					
					{\arrow{latex}}},postaction={decorate}}}
		\filldraw[draw=white,fill=gray!20] (-0,-0.2) rectangle (1, 1.2);
		\draw [line width =0.8pt,decoration={markings, mark=at position 0.5 with {\arrow{>}}},postaction={decorate}](0.6,0.6)--(1,1);
		\draw [line width =0.8pt,decoration={markings, mark=at position 0.5 with {\arrow{>}}},postaction={decorate}](0.6,0.4)--(1,0);
		\draw[line width =0.8pt] (0,0)--(0.4,0.4);
		\draw[line width =0.8pt] (0,1)--(0.4,0.6);
		\draw[line width =0.8pt] (0.4,0.6)--(0.6,0.4);
	\end{tikzpicture}
}

%% file: H-re.tex
\raisebox{-.20in}{
	\begin{tikzpicture}
		\tikzset{->-/.style=
			
			{decoration={markings,mark=at position #1 with
					
					{\arrow{latex}}},postaction={decorate}}}
		\filldraw[draw=white,fill=gray!20] (0,-0.2) rectangle (1.2, 1.2);
		\draw [line width =0.8pt,decoration={markings, mark=at position 0.7 with {\arrow{>}}},postaction={decorate}](0,0)--(0.4,0.5);
		\draw [line width =0.8pt,decoration={markings, mark=at position 0.7 with {\arrow{>}}},postaction={decorate}](0,1)--(0.4,0.5);
		\draw[line width =0.8pt] (0.4,0.5)--(0.8,0.5);
		\draw [line width =0.8pt,decoration={markings, mark=at position 0.6 with {\arrow{>}}},postaction={decorate}](0.8,0.5)--(1.2,0);
        \draw [line width =0.8pt,decoration={markings, mark=at position 0.6 with {\arrow{>}}},postaction={decorate}](0.8,0.5)--(1.2,1);
	\end{tikzpicture}
}

%% file: smooth-re.tex
\raisebox{-.20in}{
	
	\begin{tikzpicture}
		\tikzset{->-/.style=
			
			{decoration={markings,mark=at position #1 with
					
					{\arrow{latex}}},postaction={decorate}}}
		\filldraw[draw=white,fill=gray!20] (-0,-0.2) rectangle (1, 1.2);
		\draw [line width =0.8pt,decoration={markings, mark=at position 0.5 with {\arrow{>}}},postaction={decorate}](0,0.8)--(1,0.8);
		\draw [line width =0.8pt,decoration={markings, mark=at position 0.5 with {\arrow{>}}},postaction={decorate}](0,0.2)--(1,0.2);
	\end{tikzpicture}
}

%% file: negative-crossing.tex
\raisebox{-.20in}{
	\begin{tikzpicture}
		\tikzset{->-/.style=
			
			{decoration={markings,mark=at position #1 with
					
					{\arrow{latex}}},postaction={decorate}}}
		\filldraw[draw=white,fill=gray!20] (-0,-0.2) rectangle (1, 1.2);
		\draw [line width =1pt,decoration={markings, mark=at position 0.5 with {\arrow{>}}},postaction={decorate}](0.6,0.6)--(1,1);
		\draw [line width =0.8pt,decoration={markings, mark=at position 0.5 with {\arrow{>}}},postaction={decorate}](0.6,0.4)--(1,0);
		\draw[line width =0.8pt] (0,0)--(0.4,0.4);
		\draw[line width =0.8pt] (0,1)--(0.4,0.6);
		\draw[line width =0.8pt] (0.6,0.6)--(0.4,0.4);
	\end{tikzpicture}
}

%% file: 4-gon.tex
\raisebox{-.15in}{
	\begin{tikzpicture}
		\tikzset{->-/.style=
			
			{decoration={markings,mark=at position #1 with
					
					{\arrow{latex}}},postaction={decorate}}}
		\filldraw[draw=white,fill=gray!20] (0,0.2) rectangle (1.2, 1.2);
		\draw [line width =0.8pt,decoration={markings, mark=at position 0.7 with {\arrow{>}}},postaction={decorate}](0.4,0.4)--(0,0.4);
		\draw [line width =0.8pt,decoration={markings, mark=at position 0.7 with {\arrow{>}}},postaction={decorate}](0,1)--(0.4,1);
		\draw [line width =0.8pt,decoration={markings, mark=at position 0.6 with {\arrow{>}}},postaction={decorate}](1.2,0.4)--(0.8,0.4);
        \draw [line width =0.8pt,decoration={markings, mark=at position 0.6 with {\arrow{>}}},postaction={decorate}](0.8,1)--(1.2,1);
        \draw[line width =0.8pt] (0.4,0.4)rectangle (0.8, 1);
	\end{tikzpicture}
}

%% file: smooth2.tex
\raisebox{-.15in}{
	\begin{tikzpicture}
		\tikzset{->-/.style=
			
			{decoration={markings,mark=at position #1 with
					
					{\arrow{latex}}},postaction={decorate}}}
		\filldraw[draw=white,fill=gray!20] (0,0.2) rectangle (1.2, 1.2);
		\draw [line width =0.8pt,decoration={markings, mark=at position 0.5 with {\arrow{>}}},postaction={decorate}](1.2,0.4)--(0,0.4);
		\draw [line width =0.8pt,decoration={markings, mark=at position 0.5 with {\arrow{>}}},postaction={decorate}](0,1)--(1.2,1);
	\end{tikzpicture}
}

%% file: smooth1.tex
\raisebox{-.15in}{
	\begin{tikzpicture}
		\tikzset{->-/.style=
			
			{decoration={markings,mark=at position #1 with
					
					{\arrow{latex}}},postaction={decorate}}}
		\filldraw[draw=white,fill=gray!20] (0,0.2) rectangle (1.2, 1.2);
         \draw[line width =0.8pt,decoration={markings, mark=at position 0.7 with {\arrow{<}}},postaction={decorate}]
  plot[smooth] coordinates {(0,0.4) (0.5,0.7) (0,1)};
  \draw[line width =0.8pt,decoration={markings, mark=at position 0.7 with {\arrow{>}}},postaction={decorate}]
  plot[smooth] coordinates {(1.2,0.4) (0.7,0.7) (1.2,1)};
	\end{tikzpicture}
}

%% file: 4-gon-rev.tex
\raisebox{-.15in}{
	\begin{tikzpicture}
		\tikzset{->-/.style=
			
			{decoration={markings,mark=at position #1 with
					
					{\arrow{latex}}},postaction={decorate}}}
		\filldraw[draw=white,fill=gray!20] (0,0.2) rectangle (1.2, 1.2);
		\draw [line width =0.8pt,decoration={markings, mark=at position 0.7 with {\arrow{<}}},postaction={decorate}](0.4,0.4)--(0,0.4);
		\draw [line width =0.8pt,decoration={markings, mark=at position 0.7 with {\arrow{<}}},postaction={decorate}](0,1)--(0.4,1);
		\draw [line width =0.8pt,decoration={markings, mark=at position 0.6 with {\arrow{<}}},postaction={decorate}](1.2,0.4)--(0.8,0.4);
        \draw [line width =0.8pt,decoration={markings, mark=at position 0.6 with {\arrow{<}}},postaction={decorate}](0.8,1)--(1.2,1);
        \draw[line width =0.8pt] (0.4,0.4)rectangle (0.8, 1);
	\end{tikzpicture}
}

%% file: smooth2-re.tex
\raisebox{-.15in}{
	\begin{tikzpicture}
		\tikzset{->-/.style=
			
			{decoration={markings,mark=at position #1 with
					
					{\arrow{latex}}},postaction={decorate}}}
		\filldraw[draw=white,fill=gray!20] (0,0.2) rectangle (1.2, 1.2);
		\draw [line width =0.8pt,decoration={markings, mark=at position 0.5 with {\arrow{<}}},postaction={decorate}](1.2,0.4)--(0,0.4);
		\draw [line width =0.8pt,decoration={markings, mark=at position 0.5 with {\arrow{<}}},postaction={decorate}](0,1)--(1.2,1);
	\end{tikzpicture}
}

%% file: smooth1-re.tex
\raisebox{-.15in}{
	\begin{tikzpicture}
		\tikzset{->-/.style=
			
			{decoration={markings,mark=at position #1 with
					
					{\arrow{latex}}},postaction={decorate}}}
		\filldraw[draw=white,fill=gray!20] (0,0.2) rectangle (1.2, 1.2);
         \draw[line width =0.8pt,decoration={markings, mark=at position 0.7 with {\arrow{>}}},postaction={decorate}]
  plot[smooth] coordinates {(0,0.4) (0.5,0.7) (0,1)};
  \draw[line width =0.8pt,decoration={markings, mark=at position 0.7 with {\arrow{<}}},postaction={decorate}]
  plot[smooth] coordinates {(1.2,0.4) (0.7,0.7) (1.2,1)};
	\end{tikzpicture}
}

%% file: 2-gon.tex
\raisebox{-.10in}{
	\begin{tikzpicture}
		\tikzset{->-/.style=
			
			{decoration={markings,mark=at position #1 with
					
					{\arrow{latex}}},postaction={decorate}}}
		\filldraw[draw=white,fill=gray!20] (0,0) rectangle (1.2, 0.8);
		\draw [line width =0.8pt,decoration={markings, mark=at position 0.75 with {\arrow{>}}},postaction={decorate}](0,0.4)--(0.3,0.4);
        \draw [line width =0.8pt,decoration={markings, mark=at position 0.75 with {\arrow{>}}},postaction={decorate}](0.9,0.4)--(1.2,0.4);
		\draw [line width =0.8pt] (0.6,0.4) ellipse (0.3cm and 0.2cm);
	\end{tikzpicture}
}

%% file: straight-line.tex
\raisebox{-.10in}{
	\begin{tikzpicture}
		\tikzset{->-/.style=
			
			{decoration={markings,mark=at position #1 with
					
					{\arrow{latex}}},postaction={decorate}}}
		\filldraw[draw=white,fill=gray!20] (0,0) rectangle (1.2, 0.8);
        \draw [line width =0.8pt,decoration={markings, mark=at position 0.75 with {\arrow{>}}},postaction={decorate}](0,0.4)--(1.2,0.4);
	\end{tikzpicture}
}

%% file: trivial-knot1.tex
\begin{tikzpicture}
\filldraw[draw=white,fill=gray!20] (0,0) rectangle (1, 1);
\draw[<-, line width=0.8pt]
  (0.85,0.5) arc (0:360:0.35cm);
\end{tikzpicture}

%% file: trivial-knot2.tex
\raisebox{-.15in}{
\begin{tikzpicture}
\filldraw[draw=white,fill=gray!20] (0,0) rectangle (1, 1);
\draw[->, line width=0.8pt]
  (0.85,0.5) arc (0:360:0.35cm);
\end{tikzpicture}
}

%% file: trivial-arc.tex
\raisebox{-.05in}{
	\begin{tikzpicture}
		\tikzset{->-/.style=
			
			{decoration={markings,mark=at position #1 with
					
					{\arrow{latex}}},postaction={decorate}}}
        \filldraw[draw=white,fill=gray!20] (0,0) rectangle (1, 0.6);
        \draw[line width =0.8pt] (0.8,0) arc (0:180:0.3cm);
		\draw [line width =1.5pt](0,0)--(1,0);
	\end{tikzpicture}
}

%% file: trivial-Y.tex
\raisebox{-.05in}{
	\begin{tikzpicture}
		\tikzset{->-/.style=
			
			{decoration={markings,mark=at position #1 with
					
					{\arrow{latex}}},postaction={decorate}}}
        \filldraw[draw=white,fill=gray!20] (0,0) rectangle (1, 0.6);
        \draw[line width =0.8pt] (0.5,0.3)--(0.5,0.6);
        \draw[line width =0.8pt] (0.5,0.3)--(0.2,0);
        \draw[line width =0.8pt] (0.5,0.3)--(0.8,0);
		\draw [line width =1.5pt](0,0)--(1,0);
	\end{tikzpicture}
}

%% file: H-boundary.tex
\raisebox{-.08in}{
	\begin{tikzpicture}
		\tikzset{->-/.style=
			
			{decoration={markings,mark=at position #1 with
					
					{\arrow{latex}}},postaction={decorate}}}
        \filldraw[draw=white,fill=gray!20] (0,0) rectangle (0.8, 0.6);
        \draw[line width =0.8pt] (0.2,0)--(0.2,0.6);
        \draw[line width =0.8pt] (0.6,0)--(0.6,0.6);
        \draw[line width =0.8pt] (0.2,0.3)--(0.6,0.3);
		\draw [line width =1.5pt](0,0)--(0.8,0);
	\end{tikzpicture}
}

%% file: elliptic-circle.tex
\raisebox{-.10in}{
	\begin{tikzpicture}
		\tikzset{->-/.style=
			
			{decoration={markings,mark=at position #1 with
					
					{\arrow{latex}}},postaction={decorate}}}
        \filldraw[draw=white,fill=gray!20] (0,0) rectangle (1, 1);
        \draw[line width =0.8pt] (0.5,0.5) circle (0.3cm);
        \draw[fill=black,line width =0.8pt] (0.5,0.2) circle (0.05cm);
	\end{tikzpicture}
},\qquad
\raisebox{-.10in}{
	\begin{tikzpicture}
		\tikzset{->-/.style=
			
			{decoration={markings,mark=at position #1 with
					
					{\arrow{latex}}},postaction={decorate}}}
        \filldraw[draw=white,fill=gray!20] (0,0) rectangle (1, 1);
        \draw[line width =0.8pt] (0.5,0.6)--(0.5,1);
        \draw[line width =0.8pt] (0.5,0.4) circle (0.2cm);
        \draw[fill=black,line width =0.8pt] (0.5,0.2) circle (0.05cm);
	\end{tikzpicture}
}.